\documentclass[11pt]{article}
\topmargin -1.0cm \textwidth 14cm \textheight 23cm \hoffset-1cm

\usepackage{amsmath,mathrsfs}
\usepackage{amssymb}
\usepackage{amsthm}
\usepackage{indentfirst}
\setcounter{tocdepth}{1}
\usepackage{color}

\allowdisplaybreaks
%\usepackage{graphicx}
%\graphicspath{{/}}

%\newtheorem{remark}{Remark}[section]
\newtheorem{theorem}{Theorem}[section]
\newtheorem*{theorem*}{Theorem}

\newtheorem{remark}[theorem]{Remark}
\newtheorem{lemma}[theorem]{Lemma}
\newtheorem{definition}[theorem]{Definition}

\newtheorem{proposition}[theorem]{Proposition}
\newtheorem{example}[theorem]{Example}
\newtheorem*{proposition*}{Proposition}

\newcommand{\R}{\mathbb{R}}

\newcommand{\E}{\mathbb{E}}

\newcommand{\Dxp}{D_{\bx_p}}

\newcommand{\be}{\begin{eqnarray*}}
\newcommand{\ee}{\end{eqnarray*}}
\newcommand{\ba}{\begin{align*}}
\newcommand{\bpm}{\begin{pmatrix}}
\newcommand{\epm}{\end{pmatrix}}

\newcommand{\bx}{\boldsymbol{x}}
\newcommand{\bn}{\boldsymbol{n}}

\newcommand{\by}{\boldsymbol{y}}

\begin{document}
%\linenumbers

\title{Generalized partial-slice monogenic functions: \\
the octonionic case}

\author{Zhenghua Xu$^1$\thanks{This work was partially supported by  Anhui Provincial Natural Science Foundation (No. 2308085MA04), Fundamental Research Funds for the Central Universities (No. JZ2025HGTG0250) and China Scholarship Council (No. 202506690052).}
\ and Irene Sabadini$^2$\thanks{This work was partially supported by PRIN 2022 {\em Real and Complex Manifolds: Geometry and Holomorphic Dynamics}.}\\
\emph{$^1$\small School of Mathematics, Hefei University of Technology,}
\emph{\small  Hefei, 230601, P.R. China}\\
\emph{\small E-mail address: zhxu@hfut.edu.cn}
\\
\emph{\small $^2$Dipartimento di Matematica, Politecnico di Milano,}
\emph{\small Via E. Bonardi, 9, 20133 Milano, Italy} \\
\emph{\small E-mail address:   irene.sabadini@polimi.it}
}

\date{}

\maketitle

\begin{abstract}
In a recent paper [Trans. Amer. Math. Soc. 378 (2025),   851-883],   the concept of generalized partial-slice monogenic (or regular) function was introduced  over Clifford algebras.  The present paper shall extend the study of generalized partial-slice monogenic  functions from the associative  case of Clifford algebras to non-associative alternative algebras, such as octonions. The new class of functions encompasses the regular functions [Rend. Sem. Mat. Univ. Padova 50 (1973),   251-267] and slice regular functions [Rocky Mountain J. Math. 40 (2010), no. 1,  225-241] over  octonions, indeed both appear in the theory  as special cases.  In the non-associative setting of octonions,  we shall develop some fundamental  properties such as  identity theorem,   Representation Formula,  Cauchy (and Cauchy-Pompeiu) integral formula,  maximum modulus principle, Fueter polynomials, Taylor series expansion. As a complement, the paper also introduces and discusses the notion of generalized partial-slice (and regular) functions. Although the study is limited to the case of octonions, it is clear from the statements and the arguments in the proofs that the results hold more in general in real alternative algebras equipped with a notion of conjugation.
 \end{abstract}
{\bf Keywords:}\quad Functions of a hypercomplex variable;  monogenic functions; slice monogenic functions;  alternative algebras, octonions\\
{\bf MSC (2020):}\quad  Primary: 30G35;  Secondary: 32A30,   17A35
%%%%%%%%%%%%% Introduction %%%%%%%%%%%%%
%\tableofcontents

\section{Introduction}
As a generalization of holomorphic functions of one complex variable, in 1973 Dentoni and Sce \cite{Dentoni-Sce} (see the English translation in  \cite{Colombo-Sabadini-Struppa-20}) generalized the notion of \textit{(Fueter) regular function} from quaternions to octonions   (also called Cayley numbers). Thereafter, the interest in  regular functions  over octonions continued in various works among which we mention \cite{Colombo-Sabadini-Struppa-00,Li-Peng-01,Li-Peng-02,Nono}. {Inspired by the idea in \cite{Gentili-Struppa-07},} Gentili and Struppa \cite{Gentili-Struppa-10} in 2010 introduced the  definition of regularity (now called \textit{slice regularity}) on the space of Cayley numbers.
Without claiming completeness, the reader may consult \cite{Colombo-Krausshar-Sabadini-24,Ghiloni-Perotti,Ghiloni-Perotti-Stoppato-20,Perotti-22,Wang-17,Xu-21} and references therein for more results  on octonionic slice-regular functions and \cite{Dou-23,Dou,Jin-Ren-20,Jin-Ren-21,Jin-Ren-Sabadini-20} for other variations of slice  analysis over octonions.

This paper is a continuation of the work \cite{Xu-Sabadini} where  the
concept of \textit{generalized partial-slice monogenic functions}  was  introduced for functions with values in a Clifford algebra and immediately elaborated, in early 2023, in the subsequent paper \cite{DingXu}. The function theory includes the two theories of monogenic functions \cite{Brackx} and of slice monogenic functions \cite{Colombo-Sabadini-Struppa-09, Colombo-Sabadini-Struppa-11}, respectively.  See   \cite{Huo-24,Xu-Sabadini-2,Xu-Sabadini-3,Xu-Sabadini-4} for more results in this new setting. As a special case of alternative  algebras, it is well known that all Clifford algebras  are   associative, so it is natural to  further investigate generalized partial-slice monogenic functions  in the  setting of a non-associative alternative $\ast$-algebras, in particular in the case of octonions.

The notion of generalized partial-slice monogenic functions over octonions  encompasses the regular functions studied by Dentoni and Sce \cite{Dentoni-Sce}, slice regular functions in the sense of Gentili and Struppa \cite{Gentili-Struppa-10} and also functions in the kernel of the slice Dirac operator in octonions by Jin and Ren \cite{Jin-Ren-20}.
The idea of partial-sliceness has been developed in the theory of $T$-regular functions, which  proposes a unified theory of regularity in one hypercomplex variable for an alternative $\ast$-algebra, see \cite{Ghiloni-Stoppato-24-1,Ghiloni-Stoppato-24-2}, where all results, except the representation formula (and its consequences),  are obtained for associative algebras.

The main novelty of this paper is then the consideration of the non-associative case which is treated in detail in the case of octonions.
In slice analysis, a very useful tool is the so-called splitting lemma, which implies that slice regular functions inherit various properties  of the holomorphic functions that appear in the splitting. Unlike all cases of slice regular or slice monogenic functions, the splitting lemma no longer holds for
generalized partial-slice monogenic functions over octonions and, more in general, in the non-associative case.  This peculiarity entails that various results that in slice analysis may be deduced using the splitting lemma need to be obtained using different arguments in our octonionic case. The non-associativity emerges in various statements and proofs in the paper, and is particularly relevant in the study of generalized partial-slice monogenic homogeneous polynomials which are the building blocks of the Taylor series expansion.

The  numerous results that we obtain in the paper are briefly outlined   as follows.
In Section 2,  we recall some basic definitions about real alternative algebras
and octonions,  and then recall the notions of regular functions and of slice regular
functions in the octonionic framework.

In Section 3,  we shall set up some basic results  for generalized partial-slice monogenic functions over octonions. We define the so-called \textit{slice domains} and  \textit{partially symmetric domains}, which are useful to prove results such as the identity theorem  which is then  used  to  establish the Representation Formula (Theorem \ref{Representation-Formula-SM}), a fundamental tool in slice analysis.

In  Section  4, we establish a Cauchy-Pompeiu integral formula (Theorem \ref{CauchyPompeiuslice}) in the non-associative case of octonions, which gives a slice version of the Cauchy integral formula (Theorem \ref{Cauchy-slice}) as well as some  consequences, such as mean value theorem and maximum modulus principle.

 In  Section 5,  we  formulate the notion of  generalized partial-slice  (and regular) functions over octonions and  prove a global   version of Cauchy-Pompeiu integral  formula (Theorem \ref{Cauchy-Pompeiu}), where the Cauchy kernel turns out to be operator-valued. In addition,  we  obtain the relation between the set of generalized partial-slice monogenic  and regular functions over octonions, see Theorem \ref{relation-GSR-GSM}.

 Finally, in  Section  6, we  study the  Taylor series expansion for generalized partial-slice monogenic   functions over octonions. To this end, we  construct generalized partial-slice monogenic Fueter-type polynomials over octonions, and, as a tool, we discuss a Cauchy-Kovalevskaya extension starting from some real analytic functions defined in some  domains of $\mathbb{R}^{p+1}$, see Definition \ref{Cauchy-Kovalevska-extension}.

\section{Preliminaries}
In this section, we collect some preliminary results on alternative algebras and octonions and recall   the notions of    regular  functions and  of slice  regular  functions.
\subsection{Real alternative algebras}
\noindent Let $\mathbb{A}$ be a real algebra with a unity. Recall that a real algebra $\mathbb{A}$ is said to be alternative if the \textit{associator}
$$[a,b,c]:= (ab)c-a(bc)$$ is a trilinear, alternating function in  the variables $a,b,c\in \mathbb{A}$.

 Two important results that allow to work in an alternative algebra $\mathbb{A}$ are the following:
\begin{itemize}
\item
The Artin theorem asserting that
\textit{the subalgebra generated by two elements of $\mathbb  A$ is associative}.
\item
The Moufang identities:
$$a(b(ac)) = (aba)c,\qquad ((ab)c)b = a(bcb), \qquad (ab)(ca) = a(bc)a,$$
for  $a,b,c\in\mathbb A$.
\end{itemize}
A special case of real alternative algebra with unit   is the algebra of octonions, described in detail in the next subsection.

\subsection{The algebra  of octonions}
Let $\mathcal{B}=\{e_0=1, e_1, e_2, \ldots, e_7 \}$   be  the standard orthogonal basis    of $\mathbb{R}^{8}$ and let
$$\Xi=\{(1,2,3), (1,4,5),(2,4,6),(3,4,7),(5,3,6), (6,1,7),(7,2,5)\}.$$
A possible construction of the real algebra of  octonions is done by taking the span of $\mathcal{B}$,   endowed with the multiplication rule
$$e_ie_j=-\delta_{ij}+\epsilon_{ijk}e_k, \ i,j,k\in \{1,2,\ldots,7\},$$
where $\delta_{ij}$ is the Kronecker symbol and
\begin{eqnarray*}
\epsilon_{ijk}=
\begin{cases}
(-1)^{\tau (\sigma)}, \qquad   (i,j,k)\in \sigma(\Xi)=\{ \sigma(\xi): \xi\in \Xi \},
\\
0\qquad \qquad \qquad \mbox {otherwise},
\end{cases}
\end{eqnarray*}
here $\sigma$  denotes a permutation and  $\tau (\sigma)$ is its sign.

Any $x\in \mathbb{O}$ can be expressed as
$$x=x_0+\sum_{i=1}^{7}x_ie_i, \quad x_i\in \mathbb{R}, $$
its conjugate is defined as $$\overline{x}=x_0-\sum_{i=1}^{7}x_ie_i, $$
and the modulus of $x$ is defined as $|x|=\sqrt{x\overline{x}}$, which is exactly the usual Euclidean norm in $\mathbb{R}^{8}$.
Furthermore, the modulus is multiplicative, i.e., $|xy|=|x||y|$ for all $x,y \in \mathbb O$. Every nonzero   $x\in\mathbb{O}$ has a multiplicative  \textit{inverse} given by $x^{-1}=|x|^{-2}\overline{x}$. Hence,  $\mathbb{O}$ is a  non-commutative, non-associative, normed, and division algebra.    See for instance \cite{Baez,Okubo,Schafer}  for more explanations on  alternative algebras and   octonions.

In this paper, we shall make use of the following useful property which is an immediate consequence of the Artin  theorem.
\begin{proposition}\label{artin}
For any $x,y \in \mathbb{O},$ it holds that
$$[x,x,y]=[\overline{x},x,y]=0.$$
\end{proposition}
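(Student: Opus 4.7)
The plan is to exploit two basic facts: the alternating property of the associator in the alternative algebra $\mathbb{O}$, and the concrete form of the conjugation on octonions.

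First, the identity $[x,x,y]=0$ is immediate: by definition of ``alternative,'' the associator $[a,b,c]$ is an alternating trilinear map, so it vanishes whenever two of its arguments coincide. (Alternatively, this is precisely the ``left alternative law'' $(xx)y = x(xy)$.)

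For the second identity $[\overline{x},x,y]=0$, the key observation is that the conjugate satisfies $\overline{x} = 2x_0 - x$, where $x_0=\mathrm{Re}(x)\in\mathbb{R}$. Since the associator is $\mathbb{R}$-trilinear, I would expand
\[
[\overline{x},x,y] \;=\; [2x_0 - x,\, x,\, y] \;=\; 2x_0\,[1,x,y] \;-\; [x,x,y].
\]
The first summand vanishes because $[1,x,y] = (1\cdot x)y - 1\cdot(xy) = xy - xy = 0$, and the second summand vanishes by the first part of the proposition. This gives the result.

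There is no real obstacle here: the statement is essentially a direct consequence of the alternating property combined with the fact that $\overline{x}$ lies in the real span of $\{1,x\}$. (One could alternatively appeal to Artin's theorem, noting that the subalgebra generated by $x$ and $y$ is associative and contains $\overline{x}$, hence any associator formed from these three elements vanishes; but the linear decomposition above is more transparent and keeps the proof self-contained.)
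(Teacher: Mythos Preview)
Your proof is correct. The paper does not supply a formal proof; it simply states the proposition as ``an immediate consequence of the Artin theorem,'' i.e., the subalgebra generated by $x$ and $y$ is associative and contains $\overline{x}=2x_0-x$, so any associator built from $\overline{x},x,y$ vanishes. Your primary argument takes the more elementary route of expanding $\overline{x}=2x_0-x$ directly and using only trilinearity together with the alternating axiom, thereby avoiding the (nontrivial) Artin theorem altogether; you then mention the Artin approach as a parenthetical alternative, which is exactly the paper's reasoning. Both arguments are valid; yours is self-contained from the axioms, while the paper's one-line invocation of Artin is shorter but relies on a deeper structural result.
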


\subsection{Regular and slice regular functions}
In this  subsection,  we  recall the definitions of  regular and of slice regular  functions.

Throughout this paper,  an element $(x_0,x_1,\ldots,x_7)\in\R^{8}$ will be identified with the octonion $x\in \mathbb{O}$ via
$$(x_0,x_1,\ldots,x_7) \rightarrow x=x_0+\sum_{i=1}^{7}x_ie_i. $$ We consider the functions  $f:\Omega\longrightarrow  \mathbb{O}$, where $\Omega\subseteq \mathbb{O}$ is a domain (i.e., connected open set). As usual, denote $\mathbb{N}=\{0,1,2,\ldots\}$.  For $k\in \mathbb{N}\cup \{\infty\}$,  denote by $ C^{k}(\Omega,\mathbb{O})$ the set of all functions  $f (x)=\sum_{i=0}^{7}  e_{i}   f_{i}(x)$ with real-valued $f_{i}(x)\in  C^{k}(\Omega)$.

\begin{definition}[Regular    function \cite{Dentoni-Sce,Colombo-Sabadini-Struppa-20}] \label{monogenic-Clifford}
Let $\Omega $ be a domain   in $\mathbb{O}$ and let  $f\in C^{1}(\Omega,\mathbb{O})$.  The function $f (x)=\sum_{i=0}^{7}  e_{i}   f_{i}(x)$ is called  left regular (or  left $\mathbb{O}$-analytic) in $\Omega $ if it satisfies the   equation
$$ D_{x}f(x):=\sum _{i=0}^{7}e_{i} \frac{\partial f}{\partial x_{i}}(x)
= \sum _{i,j=0}^{7}   e_{i}e_{j} \frac{\partial f_{j}}{\partial x_{i}}(x)=0, \quad x\in \Omega. $$
Similarly, the function $f$ is called  right   regular (or right $\mathbb{O}$-analytic) in $\Omega $ if
$$ f(x)D_x:=\sum _{i=0}^{7} \frac{\partial f}{\partial x_{i}}(x)e_{i}= \sum _{i,j=0}^{7}   e_{j} e_{i} \frac{\partial f_{j}}{\partial x_{i}}(x)=0, \quad x\in \Omega. $$
\end{definition}
The operator in Definition  \ref{monogenic-Clifford}
$$D_x=\sum_{i=0}^{7}e_i\frac{\partial}{\partial x_i}=\sum_{i=0}^{7}e_i\partial_{x_i}$$
is called the   generalized Cauchy-Riemann operator   in octonionic analysis.

The set of square roots of $-1$ in $\mathbb{O}$ is the $6$-dim unit sphere given  by
 $$S^{6}= \{I \in \mathbb{O} \mid I^{2} = -1 \}.$$
For  each  $I \in S^{6}$, denote by $\mathbb{C}_{I}:=\langle1, I\rangle \cong \mathbb{C}$ the subalgebra of $\mathbb{O}$ generated by $1$ and $I$. Notice that each  $x\in \mathbb O$ can be expressed as $x=x_0+r I$ with  $x_0\in \mathbb R, r\geq0$ and $I\in  S^{6}$. This observation allows decomposing $\mathbb{O}$ into `complex slices'
$$\mathbb{ O}=\bigcup_{I \in  S^{6}} \mathbb{C}_{I},$$
which  derives the   notion of slice regularity over octonions  \cite{Gentili-Struppa-10}.
\begin{definition}[Slice regular function]\label{slice-regular}
 Let $\Omega$ be a domain in $\mathbb{O}$. A function $f :\Omega \rightarrow \mathbb{O}$ is called (left)  slice regular if, for all $I \in   S^{6}$, its restriction $f_{I}$ to $\Omega_{I}=\Omega\cap  \mathbb{C}_I   \subseteq \mathbb{R}^{2}$ is (left) holomorphic, i.e.,  $f_{I}\in C^{1}(\Omega_{I},\mathbb{O})$ satisfies
$$ \partial_{x_0}f_{I}(x_0+rI) +I\partial_{r}  f_{I}(x_0+rI)=0, \quad x_0+rI \in \Omega_{I}.$$
Similarly, the function $f$ is called  right slice  regular  if, for all $I \in   S^{6}$, its restriction $f_{I} \in C^{1}(\Omega_{I},\mathbb{O})$ satisfies
$$ \partial_{x_0}f_{I}(x_0+rI) +\partial_{r}  f_{I}(x_0+rI)I=0, \quad x_0+rI \in \Omega_{I}.$$
 \end{definition}

%%%%%%%%%%%%%%%%%%%%%%%%%%%%%%%%%%%%%%%%%%%
\section{Generalized partial-slice monogenic functions}
In the sequel, let $p\in  \mathbb{N}$, $0\leq p\leq 6$, and set $q=7-p$. From now on, we shall split the element $x\in \mathbb{O}$ into
$$\bx=\bx_p+\underline{\bx}_q \in\R^{p+1}\oplus\R^{q}, \quad \bx_p=\sum_{i=0}^{p}x_i e_i,\ \underline{\bx}_q=\sum_{i=p+1}^{7}x_i e_i.$$
Here  we write the  element $x$ as $\bx$ in bold to emphasize the splitting.

Similarly, the generalized Cauchy-Riemann operator   and the Euler operator are split as
\begin{equation}\label{Dxx}
D_{\bx}=\sum_{i=0}^{7}e_i\partial_{x_i}=\sum_{i=0}^{p}e_i\partial_{x_i}+
\sum_{i=p+1}^{7}e_i\partial_{x_i}=:D_{\bx_p}+D_{\underline{\bx}_q},
\end{equation}
\begin{equation*}\label{Exx}
\E_{\bx}=\sum_{i=0}^{7}x_i\partial_{x_i}=\sum_{i=0}^{p}x_i\partial_{x_i}+
\sum_{i=p+1}^{7}x_i\partial_{x_i}=:\E_{\bx_p}+\E_{\underline{\bx}_q}.
\end{equation*}

Denote by $\mathbb{S}$ the unit sphere   in $\mathbb R^q$, whose elements $(x_{p+1},\ldots, x_{7})$ are identified with $\underline{\bx}_q=\sum_{i=p+1}^{7}x_i e_i$, i.e.
$$\mathbb{S}=\big\{\underline{\bx}_q: \underline{\bx}_q^2 =-1\big\}=\big\{\underline{\bx}_q=\sum_{i=p+1}^{7}x_i e_i:\sum_{i=p+1}^{7}x_i^{2}=1\big\}.$$
Note that, for $\underline{\bx}_q\neq0$, there exists a uniquely determined $r\in \mathbb{R}^{+}=\{x\in \mathbb{R}: x>0\}$ and $\underline{\omega}\in \mathbb{S}$, such that $\underline{\bx}_q=r\underline{\omega}$, more precisely
 $$r=|\underline{\bx}_q|, \quad \underline{\omega}=\frac{\underline{\bx}_q}{|\underline{\bx}_q|}. $$
 When $\underline{\bx}_q= 0$ we set $r=0$ and $\underline{\omega}$ is not uniquely defined since   $\bx=\bx_p+0 \underline{\omega}  $ for all $\underline{\omega}\in \mathbb{S}$. Hence, for any $\bx=\bx_p+\underline{\bx}_q \in \mathbb{O}$, we set $\bx':=(\bx_p,r)=(x_0,x_1,\ldots, x_p,r)\in \mathbb{R}^{p+2}$ with $r=|\underline{\bx}_q|$.

The upper half-space $\mathrm{H}_{\underline{\omega}}$ in $\mathbb{R}^{p+2}$ associated with $\underline{\omega}\in \mathbb{S}$ is defined by
$$\mathrm{H}_{\underline{\omega}}=\{\bx_p+r\underline{\omega}, \bx_p \in\R^{p+1}, r\geq0 \},$$
and it is clear from the previous discussion that
$$ \mathbb{O}=\bigcup_{\underline{\omega}\in \mathbb{S}} \mathrm{H}_{\underline{\omega}},$$
and
$$ \R^{p+1}=\bigcap_{\underline{\omega}\in \mathbb{S}} \mathrm{H}_{\underline{\omega}}.$$

In the sequel, we shall make use of the notation
$$\Omega_{\underline{\omega}}:=\Omega\cap (\mathbb{R}^{p+1} \oplus \underline{\omega} \mathbb{R})\subseteq \mathbb{R}^{p+2},$$
where $\Omega$ is an open set in $\mathbb{O}$.

Recalling the notation in formula \eqref{Dxx},  we now introduce the  definition of  generalized partial-slice monogenic functions over octonions as follows.
\begin{definition} \label{definition-slice-monogenic}
 Let $\Omega$ be a domain in $\mathbb{O}$. A function $f :\Omega \rightarrow \mathbb{O}$ is called left  generalized partial-slice monogenic of type $p$ if, for all $ \underline{\omega} \in \mathbb S$, its restriction $f_{\underline{\omega}}$ to $\Omega_{\underline{\omega}}\subseteq \mathbb{R}^{p+2}$  is of class $C^{1}$ and  satisfies
$$D_{\underline{\omega}}f_{\underline{\omega}}(\bx):=(D_{\bx_p}+\underline{\omega}\partial_{r}) f_{\underline{\omega}}(\bx_p+r\underline{\omega})=0,$$
for all $\bx=\bx_p+r\underline{\omega} \in \Omega_{\underline{\omega}}$.
\\
We denote by $\mathcal {GSM}(\Omega)$ (or $\mathcal {GSM}^{L}(\Omega)$ when needed)  the class of  all  left generalized partial-slice monogenic functions of type $p$ in $\Omega$.
\\
Similarly,  denote by $\mathcal {GSM}^{R}(\Omega)$ the set of  all right  generalized partial-slice monogenic functions of type $p$,  which are defined by the condition
$$f_{\underline{\omega}}(\bx)D_{\underline{\omega}}:={f_{\underline{\omega}} (\bx_p+r\underline{\omega})D_{\bx_p}}+ \partial_{r}f_{\underline{\omega}} (\bx_p+r\underline{\omega})\underline{\omega}=0,$$
for all $\bx=\bx_p+r\underline{\omega} \in \Omega_{\underline{\omega}}$.
 \end{definition}
 Unlike the Clifford algebra case,  we remark that    $\mathcal {GSM}^{L}(\Omega)$ is not a right $\mathbb{O}$-module due to the non-associativity of octonions; this is a well known fact also for regular and for slice regular functions of the octonionic variable, and for the specific case we are treating the reader may see Example \ref{za} below.
\begin{remark}\label{rem32}
{\rm
 When $p=6$, the  notion of generalized partial-slice monogenic functions in Definition  \ref{definition-slice-monogenic} coincides with the notion of   regular  functions  in Definition  \ref{monogenic-Clifford}.

When $p=0$, Definition  \ref{definition-slice-monogenic}  gives   one of the slice regular functions in Definition \ref{slice-regular}.}
\end{remark}
In the sequel we shall omit to specify the type $p\in\{0,1,\ldots,6\}$  unless specifically stated, which shall be fixed from the context.

\begin{remark} {\rm
The operator $D_{\underline{\omega}}$ in Definition  \ref{definition-slice-monogenic} is given in the sense of the Gateaux derivative. In fact, it can be rewritten   in the classical Fr\'{e}chet sense as   a global  nonconstant coefficients differential operator
 $$\overline{\vartheta} f(\boldsymbol{x})=\Dxp f(\bx)+\frac{\underline{\bx}_q}{|\underline{\bx}_q|^2}\mathbb{E}_{\underline{\boldsymbol{x}}_{q}} f(\bx),$$
where  $ f \in C^{1}(\Omega,\mathbb{O})$ and $\Omega$ is a domain in $\mathbb{O}\setminus \mathbb{R}^{p+1}$.}
 \end{remark}

Before discussing more properties of the function class $\mathcal {GSM}(\Omega)$,  we  provide some examples to illustrate the variety of functions belonging to this class.
\begin{example}\label{example-1}
Let $p\in\{0,1,\ldots,5\}$ and $\underline{\omega} \in\mathbb{S}$.  With the above notations,  let $D=\mathrm{H}_{\underline{\omega}}\cup \mathrm{H}_{-\underline{\omega}}$ and set
\begin{eqnarray*}
f(\bx)=
\left\{
\begin{array}{ll}
0,     &\mathrm {if} \ \bx \in  \mathbb{O} \setminus D,
\\
1,   &\mathrm {if} \ \bx \in D \setminus \mathbb{R}^{p+1}.
\end{array}
\right.
\end{eqnarray*}
 Then $f \in \mathcal {GSM}^{L}(\mathbb{O}\setminus \mathbb{R}^{p+1}) \cap{\mathcal{GSM}}^{R}(\mathbb{O} \setminus \mathbb{R}^{p+1}).$
\end{example}
For $p=6,$ the function  $f$ given by Example \ref{example-1} should be interpreted  as   constant since $D=\mathrm{H}_{\underline{\omega}}\cup \mathrm{H}_{-\underline{\omega}}=\mathbb{O}$.
\begin{example}Given $n\in \mathbb{N}$, set
$$f(\bx)=(x_{0}+\underline{\bx}_{q})^{n}, \quad \bx=  \sum_{i=0}^{p}x_i e_i + \underline{\bx}_q, \ \underline{\bx}_q=\sum_{i=p+1}^{7}x_i e_i.$$
Then $f \in \mathcal {GSM}^{L}(\mathbb{O}) \cap{\mathcal{GSM}}^{R}(\mathbb{O}).$
\end{example}
\begin{example}\label{Cauchy-kernel-example}
Consider the Cauchy kernel
$$E(\bx):=\frac{1}{\sigma_{p+1}}\frac{\overline{\bx}}{|\bx|^{p+2}}, \quad \bx \in\Omega=\mathbb{O}\setminus \{0\},$$
where $\sigma_{p+1}=2\frac{\Gamma^{p+2}(\frac{1}{2}) }{\Gamma (\frac{p+2}{2})} $ is the surface area of the unit ball in $\mathbb{R}^{p+2}$. \\
Then $E(\bx) \in \mathcal {GSM}^{L}(\Omega) \cap{\mathcal{GSM}}^{R}(\Omega).$
\end{example}

\begin{remark}\label{C1-C00}{\rm
A function $f\in \mathcal {GSM}(\Omega)$ is such that $f_{\underline{\omega} }\in C^{\infty}(\Omega_{\underline{\omega}}, \mathbb{O})$ for all $\underline{\omega}\in \mathbb S$. This is a consequence of Theorem \ref{Cauchy-slice} whose proof is postponed to Section \ref{Sect4} but which is independent of what follows in the rest of this section.}  \end{remark}

For $\mathrm{k}=(k_0,k_1,\ldots,k_{p})\in\mathbb{N}^{p+1}$,  denote $|\mathrm{k}|:=k_0+k_1+\cdots+k_{p}$. For $f \in C^{|\mathrm{k}|}(\Omega,\mathbb{O})$,  define the  \textit{partial derivative} $\partial_{\mathrm{k}}$ as
\begin{equation*}
\partial_{\mathrm{k}}f(\bx)=\partial_{\bx,\mathrm{k}}f(\bx)=\frac{\partial^{|\mathrm{k}|} }{\partial_{x_{0}}^{k_0}\partial_{x_{1}}^{k_1}\cdots\partial_{x_{p}}^{k_p} }f(\bx_p+\underline{\bx}_q), \quad \bx=\bx_p+\underline{\bx}_q.
\end{equation*}

By definition, we  have
\begin{proposition}\label{preserving-slice-monogenic}
Let $\Omega\subseteq \mathbb{O}$ be a  domain and $f\in \mathcal {GSM}(\Omega)$.  Then, for any $\by\in \mathbb{R}^{p+1}$ and  $\mathrm{k}\in\mathbb{N}^{p+1}$, we have
$$f(\cdot-\by) \in \mathcal {GSM}(\Omega+\{\by\}), \quad \partial_{\mathrm{k}}f \in \mathcal {GSM}(\Omega),$$
where $\Omega +\{\by\}=\{\bx\in \mathbb{O}: \bx-\by\in \Omega\}$.
\end{proposition}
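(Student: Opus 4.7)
The plan is to establish the two assertions separately, both times by exploiting the constant-coefficient character of the slice operator $D_{\underline{\omega}} = D_{\bx_p} + \underline{\omega}\partial_r$. For each fixed $\underline{\omega}\in\mathbb{S}$, the coefficients $e_0,\dots,e_p$ and $\underline{\omega}$ are constants of $\mathbb{O}$ and differentiation is real-linear, so $D_{\underline{\omega}}$ commutes both with every translation along $\mathbb{R}^{p+1}$ and with every partial derivative in the variables $x_0,\dots,x_p$; there is no associativity issue because at no stage are three octonion-valued quantities multiplied together.

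For the translation claim, I would set $g(\bx) := f(\bx-\by)$ on $\Omega + \{\by\}$. Since $\by \in \mathbb{R}^{p+1}$ has no $\underline{\bx}_q$ component, the decomposition $\bx = \bx_p + r\underline{\omega}$ translates as $\bx - \by = (\bx_p - \by) + r\underline{\omega}$, so the translated point sits in the same half-space $\mathrm{H}_{\underline{\omega}}$ and $(\Omega + \{\by\})_{\underline{\omega}} = \Omega_{\underline{\omega}} + \{\by\}$. The restriction $g_{\underline{\omega}}$ is then of class $C^{1}$, and a direct chain-rule computation gives $D_{\underline{\omega}} g_{\underline{\omega}}(\bx) = (D_{\underline{\omega}} f_{\underline{\omega}})(\bx - \by) = 0$, which proves the first assertion.

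For the partial-derivative claim, I would first invoke Remark \ref{C1-C00} to upgrade each $f_{\underline{\omega}}$ to $C^{\infty}(\Omega_{\underline{\omega}},\mathbb{O})$, so that $\partial_{\mathrm{k}} f_{\underline{\omega}}$ exists and is smooth on every slice. I would then define $(\partial_{\mathrm{k}} f)(\bx) := \partial_{\mathrm{k}} f_{\underline{\omega}}(\bx)$ whenever $\bx = \bx_p + r\underline{\omega} \in \Omega$. For $r > 0$ the choice of $\underline{\omega}$ is unique; at points of $\Omega \cap \mathbb{R}^{p+1}$ (which is open in $\mathbb{R}^{p+1}$ since $\Omega$ is open in $\mathbb{O}$), all restrictions $f_{\underline{\omega}}$ coincide with $f$, and since $\partial_{\mathrm{k}}$ only differentiates in directions tangent to $\mathbb{R}^{p+1}$, the value is independent of the slice chosen. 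Once well-definedness is settled, the commutation $\partial_{\mathrm{k}} \circ D_{\underline{\omega}} = D_{\underline{\omega}} \circ \partial_{\mathrm{k}}$ yields $D_{\underline{\omega}}(\partial_{\mathrm{k}} f)_{\underline{\omega}} = \partial_{\mathrm{k}}(D_{\underline{\omega}} f_{\underline{\omega}}) = 0$, as required.

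I do not foresee a genuine obstacle: both parts are essentially consistency checks. The only point deserving attention is the well-definedness of $\partial_{\mathrm{k}} f$ along the real axis $\mathbb{R}^{p+1}$, where the slice decomposition is degenerate; this is resolved by the observation that $\partial_{\mathrm{k}}$ differentiates purely in $\mathbb{R}^{p+1}$-directions, along which every slice $\Omega_{\underline{\omega}}$ contains the same open neighbourhood of the point.
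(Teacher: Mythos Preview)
Your proposal is correct and follows essentially the same route as the paper: both assertions are obtained from the constant-coefficient nature of $D_{\underline{\omega}}$, using the chain rule for the translation and the commutation $D_{\underline{\omega}}\circ\partial_{\mathrm{k}}=\partial_{\mathrm{k}}\circ D_{\underline{\omega}}$ (after invoking Remark \ref{C1-C00}) for the derivative. Your extra paragraph on the well-definedness of $\partial_{\mathrm{k}}f$ along $\mathbb{R}^{p+1}$ is a point the paper leaves implicit, but your argument for it is sound and does not depart from the paper's line of reasoning.
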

\begin{proof}
Let $f\in \mathcal {GSM}(\Omega)$, $\by\in \mathbb{R}^{p+1}$ and  $\mathrm{k}\in\mathbb{N}^{p+1}$.
By hypothesis
$$D_{\underline{\omega}}f_{\underline{\omega}}(\bx)=(D_{\bx_p}+\underline{\omega}\partial_{r}) f_{\underline{\omega}}(\bx_p+r\underline{\omega})=0,$$
for all $\underline{\omega}\in \mathbb{S}$, making the change of variable $\widehat{\bx_p}=\bx_p-\by\in \mathbb{R}^{p+1}$, and observing that $D_{\widehat{\bx_p}}=D_{\bx_p}$, we have that
\[
\begin{split}(D_{\bx_p}+\underline{\omega}\partial_{r})  f_{\underline{\omega}}(\widehat{\bx_p}+r\underline{\omega})
&=(D_{\widehat{\bx_p}}+\underline{\omega}\partial_{r})  f_{\underline{\omega}}(\widehat{\bx_p}+r\underline{\omega})=0,  \end{split}
\]
and so we obtain the first assertion. Now, using the above equalities and Remark \ref{C1-C00}, we deduce that for all $\underline{\omega}\in \mathbb{S}$
$$(D_{\bx_p}+\underline{\omega}\partial_{r})  ( \partial_{\mathrm{k}}f_{\underline{\omega}})(\bx_p+r\underline{\omega})=\partial_{\mathrm{k}}((D_{\bx_p}+\underline{\omega}\partial_{r})  f_{\underline{\omega}}(\bx_p+r\underline{\omega}))=0, $$
i.e.
$\partial_{\mathrm{k}}f \in \mathcal {GSM}(\Omega)$. The proof is complete.
\end{proof}

\begin{proposition}\label{GSM-Harm}
A function $f\in \mathcal {GSM}(\Omega)$  is necessarily harmonic slice-by-slice on $\Omega$, that is  $f_{\underline{\omega}}$  is harmonic in $\Omega_{\underline{\omega}} $ for all $\underline{\omega}\in \mathbb S$.
\end{proposition}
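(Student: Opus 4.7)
The plan is to factor the Laplacian in $\mathbb{R}^{p+2}$ through $D_{\underline{\omega}}$ and its Dirac-type conjugate $\overline{D}_{\underline{\omega}} := \partial_{x_0} - \sum_{i=1}^{p} e_i\partial_{x_i} - \underline{\omega}\,\partial_r$, and to verify directly on $f_{\underline{\omega}}$ the identity
\[
\overline{D}_{\underline{\omega}}\bigl(D_{\underline{\omega}} f_{\underline{\omega}}\bigr)(\bx_p+r\underline{\omega}) \;=\; \Delta_{p+2}\,f_{\underline{\omega}}(\bx_p+r\underline{\omega}),
\]
where $\Delta_{p+2} = \partial_{x_0}^2 + \sum_{i=1}^p \partial_{x_i}^2 + \partial_r^2$. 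The defining condition $D_{\underline{\omega}} f_{\underline{\omega}} = 0$ then forces $\Delta_{p+2} f_{\underline{\omega}} = 0$ on $\Omega_{\underline{\omega}}$, which is precisely the harmonicity claim. The $C^{2}$ (in fact $C^{\infty}$) regularity of $f_{\underline{\omega}}$ needed to iterate the operator is provided by Remark \ref{C1-C00}.

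The obstruction in the octonionic setting is that we cannot na\"\i vely treat $\overline{D}_{\underline{\omega}} D_{\underline{\omega}}$ as a product of operators acting on $\mathbb{O}$-valued functions, since in general $e_i(e_j g)\neq (e_ie_j)g$. The route around this is the symmetrization identity
\[
a(bg)+b(ag) \;=\; (ab+ba)\,g, \qquad a,b\in\mathbb{O},\ g\in\mathbb{O},
\]
which follows immediately from $[a,b,g]+[b,a,g]=0$, i.e.\ from the alternating character of the associator. Specialized, this yields $e_i(e_j g)+e_j(e_i g) = -2\delta_{ij}\,g$ for $i,j\in\{1,\dots,p\}$; via $[\underline{\omega},\underline{\omega},\cdot]=0$ (Proposition \ref{artin}) it yields $\underline{\omega}(\underline{\omega}\,g) = -g$; and, since each $e_i$ with $1\le i\le p$ anticommutes with each $e_k$ with $p+1\le k\le 7$, it yields $e_i(\underline{\omega}\,g)+\underline{\omega}(e_i\,g) = 0$.

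With these three identities in hand, I would expand $\overline{D}_{\underline{\omega}}(D_{\underline{\omega}} f_{\underline{\omega}})$ as a sum of second-derivative terms and collect contributions. The cross terms of type $\partial_{x_0}\partial_{x_i}$ and $\partial_{x_0}\partial_r$ cancel on sight; the mixed $\partial_{x_i}\partial_r$ contributions are paired by the equality of mixed partials and killed by the third identity; the pure $\partial_{x_i}\partial_{x_j}$ sum collapses to $\sum_{i=1}^p \partial_{x_i}^2 f_{\underline{\omega}}$ via the first identity; the pure $\partial_r^2$ term gives $\partial_r^2 f_{\underline{\omega}}$ via the second; and $\partial_{x_0}^2 f_{\underline{\omega}}$ survives untouched. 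The main subtlety is purely bookkeeping: one must symmetrize each index pair $(i,j)$ \emph{before} invoking any multiplication identity, since the individual bracketings $e_i(e_j g)$ and $(e_ie_j)g$ are no longer interchangeable. Once this discipline is respected the computation closes and yields the claim.
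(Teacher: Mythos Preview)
Your proposal is correct and follows essentially the same route as the paper: factor the Laplacian as $\overline{D}_{\underline{\omega}} D_{\underline{\omega}}$, invoke the $C^\infty$ regularity from Remark \ref{C1-C00}, and use the alternating associator to justify $(\overline{D}_{\underline{\omega}} D_{\underline{\omega}}) f_{\underline{\omega}} = \overline{D}_{\underline{\omega}}(D_{\underline{\omega}} f_{\underline{\omega}})$. The paper compresses this last step into a single line citing Proposition \ref{artin}, whereas you unpack the same content into the three symmetrization identities---both arguments rest on the alternativity of $\mathbb{O}$ in exactly the same way.
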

\begin{proof}  Recalling   Remark \ref{C1-C00}, we have
  $f_{\underline{\omega} }\in C^{2}(\Omega_{\underline{\omega}}, \mathbb{O})$,   then by Proposition \ref{artin}, we deduce that
\begin{eqnarray*}
\Delta_{\bx'}   f_{\underline{\omega}} = (\overline{D}_{\underline{\omega}} D_{\underline{\omega}}) f_{\underline{\omega}}=\overline{D}_{\underline{\omega}} (D _{\underline{\omega}} f_{\underline{\omega}})=0,
 \end{eqnarray*}
where $\Delta_{\bx'}$ is Laplace operator in $\mathbb{R}^{p+2}$, and the assertion follows.
\end{proof}
The fact that all functions in  $\mathcal{GSM}(\Omega)$ are  harmonic slice-by-slice implies that they are real  analytic slice-by-slice, and also gives the following:
\begin{proposition}\label{Identity-theorem-monogenic-harmonic}
Let $\underline{\eta} \in \mathbb{S},\Omega\subseteq \mathbb{O}$ be a  domain and $f\in C^{1}(\Omega_{\underline{\eta}}, \mathbb{O})$ be a   function satisfying $D_{\underline{\eta}}f_{\underline{\eta}}=0$ in  $\Omega_{\underline{\eta}}$.
If $f_{\underline{\eta}}$ equals to zero in a ball  in $\Omega_{\underline{\eta}}$, then $f_{\underline{\eta}}\equiv 0$ in  $\Omega_{\underline{\eta}}$.
\end{proposition}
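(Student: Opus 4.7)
The plan is to reduce the assertion to the classical identity principle for real analytic functions on a connected open subset of $\mathbb{R}^{p+2}$, by showing that $f_{\underline{\eta}}$ is in fact harmonic (hence real analytic) in $\Omega_{\underline{\eta}}\subseteq \mathbb{R}^{p+2}$.

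First I would upgrade the regularity of $f_{\underline{\eta}}$ from $C^{1}$ to $C^{\infty}$. The operator $D_{\underline{\eta}}=\partial_{x_{0}}+\sum_{i=1}^{p}e_i\partial_{x_i}+\underline{\eta}\,\partial_r$ is a first-order constant-coefficient elliptic operator on $\mathbb{R}^{p+2}$ when acting componentwise on the eight real components of $f_{\underline{\eta}}$, so standard elliptic regularity (or, equivalently, Weyl's lemma applied to the iterated operator derived below, interpreted distributionally) gives $f_{\underline{\eta}}\in C^{\infty}(\Omega_{\underline{\eta}},\mathbb{O})$.

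Next I would show that $f_{\underline{\eta}}$ is harmonic in $\Omega_{\underline{\eta}}$ by repeating, on a single slice, the algebraic manipulation used in Proposition \ref{GSM-Harm}: apply the formally conjugate operator $\overline{D}_{\underline{\eta}}=\partial_{x_{0}}-\sum_{i=1}^{p}e_i\partial_{x_i}-\underline{\eta}\,\partial_r$ to the identity $D_{\underline{\eta}}f_{\underline{\eta}}=0$ and expand. The only places where non-associativity could cause trouble are the symmetrized products $e_i(e_j g)+e_j(e_i g)$ for $i\neq j$ in $\{1,\dots,p\}$, and $e_i(\underline{\eta}\,g)+\underline{\eta}(e_i\,g)$; by the alternating property of the octonionic associator in Proposition \ref{artin}, each such symmetrized product collapses to $(e_ie_j+e_je_i)g=0$ and $(e_i\underline{\eta}+\underline{\eta}e_i)g=0$ respectively, using that $\underline{\eta}\in\mathbb{S}$ lies in the span of $e_{p+1},\dots,e_7$ and hence anti-commutes with each $e_i$, $i=1,\dots,p$. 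What remains is exactly $\overline{D}_{\underline{\eta}}(D_{\underline{\eta}}f_{\underline{\eta}})=\Delta_{\bx'}f_{\underline{\eta}}=0$, where $\Delta_{\bx'}$ is the Euclidean Laplacian in the variables $(x_{0},x_{1},\dots,x_{p},r)$.

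Once harmonicity is established, each real component of $f_{\underline{\eta}}$ is real analytic on $\Omega_{\underline{\eta}}$, and I would conclude by the classical identity principle: since $f_{\underline{\eta}}$ vanishes on the non-empty open ball given by hypothesis, it vanishes on the connected component of $\Omega_{\underline{\eta}}$ containing that ball, which under the implicit connectedness of the slice in the statement yields $f_{\underline{\eta}}\equiv 0$ on $\Omega_{\underline{\eta}}$. The main obstacle I anticipate is precisely the low $C^{1}$-regularity in the hypothesis, which obstructs a direct classical application of $\overline{D}_{\underline{\eta}}D_{\underline{\eta}}=\Delta_{\bx'}$; handling this obstacle cleanly by invoking ellipticity of $D_{\underline{\eta}}$ (or Weyl's lemma) is the step where non-associativity of $\mathbb{O}$ must be controlled via Proposition \ref{artin}, exactly as in the proof of Proposition \ref{GSM-Harm}.
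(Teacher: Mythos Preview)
Your proposal is correct and follows essentially the same route as the paper: establish that $f_{\underline{\eta}}$ is harmonic on $\Omega_{\underline{\eta}}$, hence real analytic, and then invoke the classical identity principle for real analytic functions componentwise. The only noteworthy difference is the mechanism you use to upgrade regularity from $C^{1}$ to $C^{\infty}$: the paper relies on the slice Cauchy integral formula (Theorem~\ref{Cauchy-slice}, via Remark~\ref{C1-C00}), whereas you invoke ellipticity of $D_{\underline{\eta}}$. Both are valid; your route avoids the forward reference to Section~\ref{Sect4}, at the cost of checking that left multiplication by the symbol $\xi_{0}+\sum_{i=1}^{p}\xi_{i}e_{i}+\xi_{p+1}\underline{\eta}$ is an invertible $\mathbb{R}$-linear map on $\mathbb{O}$ for $\xi\neq 0$, which indeed follows because $\mathbb{O}$ is a division algebra and $1,e_{1},\dots,e_{p},\underline{\eta}$ are $\mathbb{R}$-linearly independent. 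Your term-by-term harmonicity computation is finer-grained than the paper's one-line appeal to $[\overline{x},x,y]=0$ from Proposition~\ref{artin}, but the underlying algebra is the same.
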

\begin{proof}
Given $\underline{\eta} \in \mathbb{S}$,  let us write   $f(\bx_p+ \underline{\eta} r)=\sum_{i=0}^7 f_i(x_0,x_1,\ldots, x_p,r) e_i$ in $\Omega_{\underline{\eta}}$, where the functions $f_i$ are real-valued. Since $f_{\underline{\eta}}$ is harmonic on $\Omega_{\underline{\eta}}$, it is also real analytic and thus all its real components are real analytic. Being $f_{\underline{\eta}}$ equal to zero in a ball  in $\Omega_{\underline{\eta}}$ also the real components $f_i$ vanish on that ball. By the identity principle for real analytic functions, see e.g. \cite{krantzparks}, we deduce that $f_i$ are zero on $\Omega_{\underline{\eta}}$ and so $f_{\underline{\eta}}=0$ in  $\Omega_{\underline{\eta}}$, as stated.
\end{proof}
Furthermore, following the strategy in \cite[Theorem 9.27]{Gurlebeck}, we can say more:
\begin{theorem}\label{Identity-theorem-monogenic}
Let $\underline{\eta} \in \mathbb{S}$, $\Omega\subseteq \mathbb{O}$ be a  domain and $f\in C^{1}(\Omega_{\underline{\eta}}, \mathbb{O})$  be a   function  satisfying $D_{\underline{\eta}}f_{\underline{\eta}}=0$ in  $\Omega_{\underline{\eta}}$.
If the set of zeros of $f_{ \underline{\eta}}$ contains  a   $(p+1)$-dimensional smooth manifold $M$ in $\Omega_{\underline{\eta}}$, then $f_{ \underline{\eta}}\equiv0$ in  $\Omega_{\underline{\eta}}$.
\end{theorem}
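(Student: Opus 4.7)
The plan is to adapt the classical Gürlebeck-type argument for the identity theorem for monogenic functions: use the equation $D_{\underline{\eta}} f_{\underline{\eta}} = 0$ to prove that all partial derivatives of $f_{\underline{\eta}}$ vanish on $M$ by induction, then invoke real analyticity to get local vanishing, and finally apply Proposition \ref{Identity-theorem-monogenic-harmonic} to conclude. As a preliminary I would observe that by (the analogue of) Remark \ref{C1-C00} the solution $f_{\underline{\eta}}$ is actually of class $C^{\infty}$, and the computation in Proposition \ref{GSM-Harm} goes through to show that $f_{\underline{\eta}}$ is harmonic on $\Omega_{\underline{\eta}}$; consequently it is real analytic there. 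Moreover, since $D_{\underline{\eta}}$ has constant octonionic coefficients, every real partial derivative $\partial_{\mathrm{k}} f_{\underline{\eta}}$ in the $p+2$ slice variables again satisfies $D_{\underline{\eta}}(\partial_{\mathrm{k}} f_{\underline{\eta}}) = 0$.

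The heart of the argument is a lemma: if $u \in C^{\infty}(\Omega_{\underline{\eta}}, \mathbb{O})$ solves $D_{\underline{\eta}} u = 0$ and $u$ vanishes on $M$, then every first partial derivative of $u$ also vanishes on $M$. I would prove this by picking $p \in M$, choosing a smooth orthonormal frame $\{T_1, \ldots, T_{p+1}, \nu\}$ of $\mathbb{R}^{p+2}$ adapted to $M$ near $p$ (with $T_1,\ldots,T_{p+1}$ tangent to $M$ and $\nu$ a unit normal), and rewriting $D_{\underline{\eta}}$ in that frame. A direct expansion of the standard basis vectors in $\{T_j, \nu\}$ gives
\[
D_{\underline{\eta}} u(p) \;=\; \sum_{j=1}^{p+1} \pi(T_j) \,\partial_{T_j} u(p) \;+\; \pi(\nu) \,\partial_{\nu} u(p),
\]
where, for $v = (v_0, v_1, \ldots, v_p, v_r) \in \mathbb{R}^{p+2}$, I set $\pi(v) := v_0 + v_1 e_1 + \cdots + v_p e_p + v_r \underline{\eta} \in \mathbb{O}$, the octonionic symbol of $D_{\underline{\eta}}$ in the direction $v$; in particular $|\pi(v)| = |v|$. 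The tangential terms $\partial_{T_j} u(p)$ are zero because $u \equiv 0$ on $M$, leaving $\pi(\nu)\,\partial_{\nu} u(p) = 0$. Since $|\pi(\nu)| = |\nu| = 1$ and $\mathbb{O}$ is a division algebra, it has no zero divisors, so $\partial_{\nu} u(p) = 0$, which completes the lemma.

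With this lemma, an induction on the order $|\mathrm{k}|$ shows that $\partial_{\mathrm{k}} f_{\underline{\eta}} \equiv 0$ on $M$ for every multi-index: the base case is the hypothesis, while for the inductive step each $\partial_{\mathrm{k}} f_{\underline{\eta}}$ at level $k$ still solves $D_{\underline{\eta}} = 0$ and vanishes on $M$, so the lemma supplies the vanishing of its full gradient on $M$, hence of all derivatives of order $k+1$. Fixing any $p_0 \in M$, all partial derivatives of $f_{\underline{\eta}}$ vanish at $p_0$; real analyticity of $f_{\underline{\eta}}$ then forces it to vanish on some open ball around $p_0$ in $\Omega_{\underline{\eta}}$, and Proposition \ref{Identity-theorem-monogenic-harmonic} propagates this to $f_{\underline{\eta}} \equiv 0$ on all of $\Omega_{\underline{\eta}}$.

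The main obstacle I anticipate is making the tangential/normal decomposition compatible with the non-associativity of $\mathbb{O}$, since one would normally be tempted to cancel the invertible symbol $\pi(\nu)$ by left multiplication by $\pi(\nu)^{-1}$. The clean workaround is that the argument never actually inverts $\pi(\nu)$ inside $\mathbb{O}$: it only uses the absence of zero divisors to cancel the nonzero factor $\pi(\nu)$ from the identity $\pi(\nu)\,\partial_{\nu} u(p) = 0$, which is a purely multiplicative statement insensitive to associativity.
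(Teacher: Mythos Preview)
Your proposal is correct and follows essentially the same strategy as the paper's proof (both adapt the G\"urlebeck argument): reduce to showing all first partial derivatives vanish on $M$ via the division-algebra property of $\mathbb{O}$, then iterate, then use real analyticity and Proposition~\ref{Identity-theorem-monogenic-harmonic}. The only cosmetic difference is that you package the key step with an orthonormal tangent/normal frame and the symbol map $\pi$, whereas the paper uses a local parametrization of $M$ and the rank condition on the Jacobian to express each $\partial_{x_i} f_{\underline{\eta}}(\by)$ as a real multiple of a single distinguished partial derivative, obtaining $D_{\underline{\eta}} f_{\underline{\eta}}(\by) = \big(\sum_{i=0}^p r_i e_i + \underline{\eta}\big)\,\partial_{x_{p+1}} f_{\underline{\eta}}(\by) = 0$; both reduce to cancelling a nonzero octonionic factor.
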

\begin{proof}
Let $\by$ denote an arbitrary point in $M\subseteq \Omega_{\underline{\eta}}$ and
$$x(\boldsymbol{t}_p) :=x(t_0,\ldots,t_p)=\sum _{i=0}^{p} x_{i}(\boldsymbol{t}_p) e_i+   x_{p+1}(\boldsymbol{t}_p)\underline{\eta} $$
  be a parametrization of $M$ in a neighborhood of $\by$ with
 $x(0)=\by$. In view of  $ f(x(\boldsymbol{t}_p))=0$ for all $\boldsymbol{t}_p$ and of the fact that the functions $x_i(\boldsymbol{t}_p)$ are real-valued, we have
  \begin{equation}\label{xjti}
  \sum _{i=0}^{p+1} \frac{\partial x_{i}}{\partial t_{j}} \frac{\partial f_{\underline{\eta}}}{\partial x_{i}}(\by)=0, \quad j=0,1,\ldots,p.
  \end{equation}
Since $M$ is $(p+1)$-dimensional,  we can  assume that  rank$(\frac{\partial x_{i}}{\partial t_{j}})=p+1.$ Hence, we can suppose that, without loss of generality, there exist some real numbers $a_0,\ldots, a_p$ such that
$$\frac{\partial x_{p+1}} {\partial t_j}= \sum_{i=0}^p a_i\frac{\partial x_{i}} {\partial t_j},$$
and substituting in \eqref{xjti}, we deduce that
$$
\sum _{i=0}^{p} \frac{\partial x_{i}}{\partial t_{j}} \frac{\partial f_{\underline{\eta}}}{\partial x_{i}}(\by)+ \sum_{i=0}^p a_i\frac{\partial x_{i}} {\partial t_j} \frac{\partial f_{\underline{\eta}}}{\partial x_{p+1}}(\by)  =0, \quad j=0,1,\ldots,p,
$$
so that
$$
\sum _{i=0}^{p} \frac{\partial x_{i}}{\partial t_{j}} \left(\frac{\partial f_{\underline{\eta}}}{\partial x_{i}}(\by)+  a_i \frac{\partial f_{\underline{\eta}}}{\partial x_{p+1}}(\by)\right)  =0, \quad j=0,1,\ldots,p.
$$
By setting $r_i=-a_i$, $i=0,\ldots,p$, we have $$ \frac{\partial f_{\underline{\eta}}}{\partial x_{i}}(\by)=r_i \frac{\partial f_{\underline{\eta}}}{\partial x_{p+1}}(\by).   $$
Hence
 $$D_{\underline{\eta}}f_{\underline{\eta}}(\by)= (\sum _{i=0}^{p} r_i e_i + \underline{\eta})\frac{\partial f_{\underline{\eta}}}{\partial x_{p+1}}(\by)=0.$$
Since $\mathbb{O}$ is a division algebra and $\sum_{i=0}^{p} r_i e_i + \underline{\eta} \neq 0$, we have  $\frac{\partial f_{\underline{\eta}}}{\partial x_{p+1}}(\by)=0$, and so
 $$\frac{\partial f_{\underline{\eta}}}{\partial x_{i}}(\by)=0, \quad i=0,1,\ldots,p.$$
 In conclusion, it has been proved that all first-order derivatives of the function $f_{\underline{\eta}}$ vanish in $M$.
Considering   $\frac{\partial f_{\underline{\eta}}}{\partial x_{i}}$ $(i=0,1,\ldots,p+1)$ and  iterating the process above, we deduce that all derivatives of the function $f_{\underline{\eta}}$ vanish in $M$. Consequently,   the coefficients of the Taylor series for the real analytic function $f_{\underline{\eta}}$ at some point $\by$ vanish,  so that $f_{\underline{\eta}}$   equals to zero in a suitable ball  in $\Omega_{\underline{\eta}}$. By Proposition \ref{Identity-theorem-monogenic-harmonic}, $f_{\underline{\eta}}=0$ in  $\Omega_{\underline{\eta}}$, as desired.
 \end{proof}

From  Example \ref{example-1},   the class of generalized partial-slice monogenic functions  turns out to be so large that, for $p\not=6$, may even contain discontinuous functions. Therefore, in this paper we consider functions defined some special domains described below.
\begin{definition} \label{slice-domain}
 Let $\Omega$ be a domain in $\mathbb{O}$.

1.   $\Omega$ is called  slice domain if $\Omega\cap\mathbb R^{p+1}\neq\emptyset$  and $\Omega_{\underline{\omega}}$ is a domain in $\mathbb{R}^{p+2}$ for every  $\underline{\omega}\in \mathbb{S}$.

2.   $\Omega$   is called  partially  symmetric with respect to $\mathbb R^{p+1}$ (p-symmetric for short) if, for   $\bx_{p}\in\R^{p+1}, r \in \mathbb R^{+},$ and $ \underline{\omega}  \in \mathbb S$,
$$\bx=\bx_p+r\underline{\omega} \in \Omega\Longrightarrow [\bx]:=\bx_p+r \mathbb S=\{\bx_p+r \underline{\omega}, \ \  \underline{\omega}\in \mathbb S\} \subseteq \Omega. $$
 \end{definition}

Denote by $\mathcal{Z}_{f}(\Omega)$  the zero set of the function $f:\Omega\subseteq\mathbb{O}\rightarrow \mathbb{O}$.   Using  Theorem \ref{Identity-theorem-monogenic},  we can prove an identity theorem for generalized partial-slice monogenic functions over slice domains.
\begin{theorem} \label{Identity-lemma}
Let $\Omega\subseteq \mathbb{O} $ be a  slice domain and $f: \Omega\rightarrow \mathbb{O}$ be a  generalized partial-slice monogenic function.
If there is an imaginary $\underline{\eta}  \in \mathbb S $ such that $\mathcal{Z}_{f}(\Omega) \cap \Omega_{\underline{\eta}}$ is a   $(p+1)$-dimensional smooth manifold, then $f\equiv0$ in  $\Omega$.
\end{theorem}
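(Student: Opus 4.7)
The plan is to reduce to the single-slice identity result already proved in Theorem \ref{Identity-theorem-monogenic} and then use the hypothesis that $\Omega$ is a slice domain to propagate vanishing from one slice to all others.

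First I would apply Theorem \ref{Identity-theorem-monogenic} directly on the slice $\Omega_{\underline{\eta}}$. Because $\Omega$ is a slice domain, $\Omega_{\underline{\eta}}$ is a domain in $\mathbb{R}^{p+2}$, and by definition of $f \in \mathcal{GSM}(\Omega)$ the restriction $f_{\underline{\eta}}$ is of class $C^1$ on $\Omega_{\underline{\eta}}$ and satisfies $D_{\underline{\eta}} f_{\underline{\eta}} = 0$ there. The hypothesis that $\mathcal{Z}_f(\Omega) \cap \Omega_{\underline{\eta}}$ contains a $(p+1)$-dimensional smooth manifold is exactly the input required by Theorem \ref{Identity-theorem-monogenic}, so I obtain $f_{\underline{\eta}} \equiv 0$ on $\Omega_{\underline{\eta}}$.

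Next I would observe that, since $\Omega$ is a slice domain, the set $\Omega \cap \mathbb{R}^{p+1}$ is a nonempty open subset of $\mathbb{R}^{p+1}$, and it lies inside $\Omega_{\underline{\omega}}$ for every $\underline{\omega} \in \mathbb{S}$. Because $f_{\underline{\eta}} \equiv 0$ on $\Omega_{\underline{\eta}}$, in particular $f$ vanishes on the open set $\Omega \cap \mathbb{R}^{p+1}$, which is itself a $(p+1)$-dimensional smooth (indeed real analytic) manifold. For any fixed $\underline{\omega} \in \mathbb{S}$, the restriction $f_{\underline{\omega}}$ is a $C^1$ solution of $D_{\underline{\omega}} f_{\underline{\omega}} = 0$ on the domain $\Omega_{\underline{\omega}} \subseteq \mathbb{R}^{p+2}$ whose zero set contains this $(p+1)$-dimensional manifold $\Omega \cap \mathbb{R}^{p+1}$. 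A second application of Theorem \ref{Identity-theorem-monogenic} therefore yields $f_{\underline{\omega}} \equiv 0$ on $\Omega_{\underline{\omega}}$.

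Since this holds for every $\underline{\omega} \in \mathbb{S}$ and $\Omega = \bigcup_{\underline{\omega} \in \mathbb{S}} \Omega_{\underline{\omega}}$, the conclusion $f \equiv 0$ on $\Omega$ follows. The only delicate point in carrying this out is verifying the input to Theorem \ref{Identity-theorem-monogenic} in the second step, namely that $\Omega \cap \mathbb{R}^{p+1}$ qualifies as a $(p+1)$-dimensional smooth manifold sitting inside $\Omega_{\underline{\omega}}$; this is immediate because the slice-domain hypothesis forces $\Omega \cap \mathbb{R}^{p+1}$ to be a nonempty open subset of the affine subspace $\mathbb{R}^{p+1} = \bigcap_{\underline{\omega} \in \mathbb{S}} \mathrm{H}_{\underline{\omega}}$. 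No genuine obstacle beyond this bookkeeping arises, because the hard analytic content (passing from the vanishing on a $(p+1)$-dimensional submanifold to identical vanishing on a single slice) has already been absorbed into Theorem \ref{Identity-theorem-monogenic}.
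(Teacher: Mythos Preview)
Your proposal is correct and follows essentially the same route as the paper: apply Theorem \ref{Identity-theorem-monogenic} on the slice $\Omega_{\underline{\eta}}$ to get $f_{\underline{\eta}}\equiv 0$, observe that $f$ then vanishes on the nonempty open set $\Omega\cap\mathbb{R}^{p+1}$ (a $(p+1)$-dimensional manifold lying in every slice), and apply Theorem \ref{Identity-theorem-monogenic} once more on each $\Omega_{\underline{\omega}}$. The paper's proof is identical in structure and content.
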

\begin{proof}
Let $f$ be a generalized partial-slice monogenic function on the slice domain $\Omega$.
Under the hypotheses that the zero set of $f$ in the domain $\Omega_{\underline{\eta}}$ is a   $(p+1)$-dimensional smooth manifold,   Theorem \ref{Identity-theorem-monogenic}  gives that $f_{\underline{\eta}}\equiv0$  in $\Omega_{\underline{\eta}}$. In particular, we get  that $f_{\underline{\eta}}$ vanishes on $\Omega_{\underline{\eta}} \cap \mathbb{R}^{p+1}$ which is nonempty since $\Omega$ is a slice domain; thus $f$ vanishes on $\Omega\cap\mathbb{R}^{p+1}\not=\emptyset$, and so for any $\underline{\omega}\in\mathbb{S}$ we get that $f_{\underline{\omega}}$  vanishes on the domain $\Omega_{\underline{\omega}}\cap\mathbb{R}^{p+1}$. Hence,  Theorem \ref{Identity-theorem-monogenic}  shows again that
 $f_{\underline{\omega}}\equiv0$ on $\Omega_{\underline{\omega}}$ for any $\underline{\omega}  \in \mathbb S$, i.e.,    $f\equiv0$ in  $\Omega$. The proof is complete.
\end{proof}

Theorem \ref{Identity-lemma} can be reformulated in the following statement.
\begin{theorem}  {\bf(Identity theorem)}\label{Identity-theorem}
Let $\Omega\subseteq \mathbb{O}$ be a  slice domain and $f,g:\Omega\rightarrow \mathbb{O}$ be   generalized partial-slice monogenic functions.
If there is an imaginary $\underline{\omega}  \in \mathbb S $ such that $f=g$ on a   $(p+1)$-dimensional smooth manifold in $\Omega_{\underline{\omega}}$, then $f\equiv g$ in  $\Omega$.
\end{theorem}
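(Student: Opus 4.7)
The plan is to reduce the statement to Theorem \ref{Identity-lemma} by passing to the difference $h := f - g$, so the only substantive point is to verify that $h \in \mathcal{GSM}(\Omega)$. First, I would note that the slicewise operator $D_{\underline{\omega}} = D_{\bx_p} + \underline{\omega}\partial_r$ is $\mathbb{R}$-linear on $C^1(\Omega_{\underline{\omega}}, \mathbb{O})$. Consequently, for each $\underline{\omega} \in \mathbb{S}$,
\[
D_{\underline{\omega}} h_{\underline{\omega}} = D_{\underline{\omega}} f_{\underline{\omega}} - D_{\underline{\omega}} g_{\underline{\omega}} = 0 \quad \text{on } \Omega_{\underline{\omega}},
\]
so $h$ is generalized partial-slice monogenic on $\Omega$. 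Although the remark following Definition \ref{definition-slice-monogenic} warns that $\mathcal{GSM}^L(\Omega)$ is not a right $\mathbb{O}$-module because of non-associativity, only additive ($\mathbb{R}$-linear) closure is required at this step, and that holds without any appeal to associativity.

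Next, by hypothesis there exist $\underline{\omega}\in\mathbb{S}$ and a $(p+1)$-dimensional smooth manifold $M \subseteq \Omega_{\underline{\omega}}$ on which $f$ and $g$ coincide, so $M$ is contained in the zero set of $h_{\underline{\omega}}$. Applying Theorem \ref{Identity-lemma} to $h$ on the slice domain $\Omega$ (whose proof, via Theorem \ref{Identity-theorem-monogenic}, only uses that the zero set \emph{contains} a $(p+1)$-dimensional smooth manifold), we conclude $h \equiv 0$ on $\Omega$, i.e., $f \equiv g$ on $\Omega$.

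There is no genuine obstacle at this reformulation: all of the heavy lifting---slicewise real analyticity via Proposition \ref{GSM-Harm}, the pointwise Taylor-coefficient argument in Theorem \ref{Identity-theorem-monogenic}, and the propagation from the distinguished slice to every other slice through $\Omega\cap\mathbb{R}^{p+1}$ exploited in Theorem \ref{Identity-lemma}---is already in place, and the non-associativity of $\mathbb{O}$ does not intervene.
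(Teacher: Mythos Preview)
Your proposal is correct and matches the paper's approach exactly: the paper presents Theorem \ref{Identity-theorem} as a direct reformulation of Theorem \ref{Identity-lemma}, and passing to the difference $h=f-g$ (using only the $\mathbb{R}$-linearity of $D_{\underline{\omega}}$) is precisely the intended step. Your observation that the proof of Theorem \ref{Identity-lemma} only requires the zero set to \emph{contain} a $(p+1)$-dimensional smooth manifold is also accurate and resolves the only minor wording mismatch.
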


The identity theorem for generalized partial-slice monogenic  functions allows to establish a representation formula. To this end, we need a simple, yet very useful, lemma.
\begin{lemma}\label{a-w-b}
Let $a\in\mathbb{ R}^{p+1}, \underline{\omega}\in \mathbb{S}$. Then it holds that for any $b\in \mathbb{O}$
$$a(\underline{\omega}b)=\underline{\omega}(\overline{a}b).$$
\end{lemma}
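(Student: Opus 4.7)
The plan is to split $a$ into its real and purely imaginary parts, reducing the identity to a statement about two \emph{orthogonal} purely imaginary octonions, and then exploit the alternating property of the associator.

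First I would write $a = a_0 + a'$ with $a_0\in\mathbb{R}$ and $a' = \sum_{i=1}^{p} a_i e_i$, so that $\overline{a} = a_0 - a'$. Since the real scalar $a_0$ commutes and associates with every octonion, the terms $a_0(\underline{\omega}b)$ on the left and $\underline{\omega}(a_0 b) = a_0\underline{\omega}b$ on the right cancel between the two sides, reducing the lemma to the purely imaginary identity
\[
a'(\underline{\omega}b) + \underline{\omega}(a'b) = 0.
\]

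Next I would linearize the alternativity hypothesis: the trilinear associator $[\cdot,\cdot,\cdot]$ is alternating, so polarizing $[x,x,y]=0$ gives $[x,z,y] + [z,x,y] = 0$. Applying this with $x=a'$, $z=\underline{\omega}$, $y=b$ yields
\[
\bigl(a'\underline{\omega}\bigr)b - a'(\underline{\omega}b) = -\bigl[(\underline{\omega}a')b - \underline{\omega}(a'b)\bigr],
\]
which rearranges to
\[
a'(\underline{\omega}b) + \underline{\omega}(a'b) = (a'\underline{\omega} + \underline{\omega}a')\,b.
\]

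The final step would be to observe that $a'$ lies in the span of $e_1,\ldots,e_p$ while $\underline{\omega}$ lies in the span of $e_{p+1},\ldots,e_7$, so the two purely imaginary octonions are orthogonal in $\mathbb{R}^{7}$. Using the standard identity $uv + vu = -2\langle u,v\rangle$ for purely imaginary octonions $u,v$, orthogonality forces $a'\underline{\omega} + \underline{\omega}a' = 0$, whence the right-hand side above vanishes and the lemma is proved. The single delicate point is resisting the urge simply to ``move $\underline{\omega}$ past $a'$'' via associativity; what truly makes the identity hold is alternativity combined with the orthogonality built into the splitting $\mathbb{O} = \mathbb{R}^{p+1}\oplus\mathbb{R}^{q}$, which is precisely why this lemma is tailored to the partial-slice setting.
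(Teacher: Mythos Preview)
Your proof is correct and follows essentially the same approach as the paper: both arguments combine the alternating property of the associator with the anticommutation $a'\underline{\omega}+\underline{\omega}a'=0$ (equivalently $a\underline{\omega}=\underline{\omega}\,\overline{a}$) coming from the orthogonality of $\mathbb{R}^{p+1}$ and $\mathbb{S}$. The only cosmetic difference is that you split off the real part of $a$ explicitly at the outset, whereas the paper handles it via the conjugation trick $[a,\underline{\omega},b]+[\overline{a},\underline{\omega},b]=[a+\overline{a},\underline{\omega},b]=0$ to obtain $[a,\underline{\omega},b]=[\underline{\omega},\overline{a},b]$ directly.
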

\begin{proof}
Note that, for any $a,b,\underline{\omega}\in \mathbb{O}$,
$$[a,  \underline{\omega}, b]+[\overline{a},\underline{\omega}, b]= [a+\overline{a},  \underline{\omega},b]=0,$$
which gives
$$ [a,  \underline{\omega}, b]=-[\overline{a},\underline{\omega}, b]=[ \underline{\omega},\overline{a}, b].$$
Combining this with the fact
$$ a \underline{\omega} =\underline{\omega} \overline{a}, \quad a\in\mathbb{ R}^{p+1}, \underline{\omega}\in \mathbb{S},$$
we get the claim.
\end{proof}
\begin{theorem}  {\bf(Representation Formula)}  \label{Representation-Formula-SM}
Let $\Omega\subseteq \mathbb{O}$ be a p-symmetric slice domain and $f:\Omega\rightarrow \mathbb{O}$ be a  generalized partial-slice monogenic function.  Then, for any $\underline{\omega}\in \mathbb{S}$ and for $\bx_p+r\underline{\omega} \in \Omega$,
\begin{equation}\label{Representation-Formula-eq}
f(\bx_p+r \underline{\omega})=\frac{1}{2} (f(\bx_p+r\underline{\eta} )+f(\bx_p-r\underline{\eta}) )+
\frac{ 1}{2}\underline{\omega}(\underline{\eta} (  f(\bx_p-r\underline{\eta} )-f(\bx_p+r\underline{\eta}))),
\end{equation}
for any $\underline{\eta}\in \mathbb{S}$.

Moreover, the following two functions do not depend on $\underline{\eta}$:
$$F_1(\bx')=\frac{1}{2} (f(\bx_p+r\underline{\eta} )+f(\bx_p-r\underline{\eta} ) ),$$
$$F_2(\bx')=\frac{ 1}{2}\underline{\eta}(  f(\bx_p-r\underline{\eta} )-f(\bx_p+r\underline{\eta})).$$
\end{theorem}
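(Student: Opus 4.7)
The plan is to define $g:\Omega\to\mathbb{O}$ by the right-hand side of \eqref{Representation-Formula-eq} for a fixed $\underline{\eta}\in\mathbb{S}$, namely
\[
g(\bx_p+r\underline{\omega}):=\tfrac{1}{2}\bigl(f(\bx_p+r\underline{\eta})+f(\bx_p-r\underline{\eta})\bigr)+\tfrac{1}{2}\underline{\omega}\bigl(\underline{\eta}\bigl(f(\bx_p-r\underline{\eta})-f(\bx_p+r\underline{\eta})\bigr)\bigr).
\]
The p-symmetry of $\Omega$ guarantees $\bx_p\pm r\underline{\eta}\in\Omega$ whenever $\bx_p+r\underline{\omega}\in\Omega$, so $g$ is well defined. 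The strategy is: (a) show $g\in\mathcal{GSM}(\Omega)$; (b) check $g=f$ on $\Omega_{\underline{\eta}}$; (c) apply the identity theorem (Theorem \ref{Identity-theorem}) to conclude $g\equiv f$.

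For step (a), fix $\underline{\omega}\in\mathbb{S}$ and parametrize the slice $\Omega_{\underline{\omega}}$ by $\bx_p+s\underline{\omega}$, $s\in\mathbb{R}$. Setting $A(\bx_p,s)=f(\bx_p+s\underline{\eta})$ and $B(\bx_p,s)=f(\bx_p-s\underline{\eta})$, a short check using the relations $A(\bx_p,-s)=B(\bx_p,s)$, $B(\bx_p,-s)=A(\bx_p,s)$ shows that $g_{\underline{\omega}}(\bx_p+s\underline{\omega})=\tfrac{1}{2}(A+B)+\tfrac{1}{2}\underline{\omega}(\underline{\eta}(B-A))$ as a single smooth formula valid for every real $s$ (both signs produce the same expression, and $g_{\underline{\omega}}\in C^{\infty}$ since $A,B$ are, by Remark~\ref{C1-C00}). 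The monogenicity of $f$ on $\Omega_{\underline{\eta}}$ yields $D_{\bx_p}A=-\underline{\eta}\partial_s A$ and $D_{\bx_p}B=\underline{\eta}\partial_s B$, while alternativity gives $\underline{\omega}(\underline{\omega}x)=-x=\underline{\eta}(\underline{\eta}x)$. After substituting these into $(D_{\bx_p}+\underline{\omega}\partial_s)g_{\underline{\omega}}$, the vanishing of this expression reduces to the key identity
\[
e_i\bigl(\underline{\omega}(\underline{\eta}h)\bigr)=\underline{\omega}\bigl(\underline{\eta}(e_i h)\bigr),\qquad i=0,1,\ldots,p,\ h\in\mathbb{O}.
\]
This is the main obstacle, where non-associativity genuinely bites; it is overcome by two successive applications of Lemma \ref{a-w-b}: first with $a=e_i\in\mathbb{R}^{p+1}$, yielding $e_i(\underline{\omega}(\underline{\eta}h))=\underline{\omega}(\bar{e_i}(\underline{\eta}h))$, then with $a=\bar{e_i}\in\mathbb{R}^{p+1}$, yielding $\bar{e_i}(\underline{\eta}h)=\underline{\eta}(e_i h)$. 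Composing, we obtain the required identity and hence $D_{\underline{\omega}}g_{\underline{\omega}}=0$ on $\Omega_{\underline{\omega}}$ for every $\underline{\omega}\in\mathbb{S}$.

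For step (b), setting $\underline{\omega}=\pm\underline{\eta}$ in the definition of $g$ and invoking $\underline{\eta}(\underline{\eta}x)=-x$ directly gives $g(\bx_p\pm r\underline{\eta})=f(\bx_p\pm r\underline{\eta})$, so $f=g$ throughout the open domain $\Omega_{\underline{\eta}}\subseteq\mathbb{R}^{p+2}$. Any open subset of $\Omega_{\underline{\eta}}$ contains a $(p+1)$-dimensional smooth manifold, so Theorem \ref{Identity-theorem} yields $g\equiv f$ on $\Omega$, establishing \eqref{Representation-Formula-eq}. Finally, the independence of $F_1$ and $F_2$ from $\underline{\eta}$ follows from \eqref{Representation-Formula-eq} itself: evaluating at $\bx_p+r\underline{\omega}$ and $\bx_p-r\underline{\omega}$ and adding produces $F_1(\bx')=\tfrac{1}{2}(f(\bx_p+r\underline{\omega})+f(\bx_p-r\underline{\omega}))$, while subtracting and then left-multiplying by $\underline{\omega}$ (using $\underline{\omega}(\underline{\omega}F_2)=-F_2$) produces $F_2(\bx')=\tfrac{1}{2}\underline{\omega}(f(\bx_p-r\underline{\omega})-f(\bx_p+r\underline{\omega}))$; neither depends on $\underline{\eta}$.
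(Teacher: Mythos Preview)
Your proof is correct and follows essentially the same route as the paper's: define the right-hand side as a new function, verify it is generalized partial-slice monogenic via Lemma~\ref{a-w-b} (your key identity $e_i(\underline{\omega}(\underline{\eta}h))=\underline{\omega}(\underline{\eta}(e_ih))$ is exactly what the paper packages as $D_{\bx_p}(\underline{\omega}(\underline{\eta}f))=\underline{\omega}(\underline{\eta}(D_{\bx_p}f))$), and then invoke the identity theorem. The only noteworthy variation is in step~(b): you check $g=f$ on the full slice $\Omega_{\underline{\eta}}$, whereas the paper checks it only on $\Omega\cap\mathbb{R}^{p+1}$ (where the $\underline{\omega}$-term vanishes trivially since $r=0$); both suffice for Theorem~\ref{Identity-theorem}, so this is a cosmetic difference rather than a genuinely different approach.
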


\begin{proof} Consider a fixed $\underline{\eta}\in \mathbb{S}$ and the function defined by
\begin{eqnarray*}
 h(\bx)
=\frac{1}{2} (f(\bx_p+r\underline{\eta})+f(\bx_p-r\underline{\eta}) )+
\frac{ 1}{2} \underline{\omega}(\underline{\eta}(  f(\bx_p-r\underline{\eta})-f(\bx_p+r\underline{\eta})))
\end{eqnarray*}
for  $\bx=\bx_p+r\underline{\omega} \in \Omega$ with  $\bx_{p}\in\R^{p+1}, r \geq0,$ and  $\underline{\omega} \in \mathbb{S}$.

It is immediate that
 $f\equiv h$  in the domain  $\Omega\cap \mathbb{R}^{p+1}$. If we show that $h\in\mathcal {GSM}(\Omega)$, the result will follow from Theorem \ref{Identity-theorem}.
Applying Lemma \ref{a-w-b}, we have
$$D_{\bx_p} (\underline{\omega} (\underline{\eta}  f(\bx_p+r\underline{\eta})))=
\underline{\omega} ( \overline{D}_{\bx_p} (\underline{\eta}  f(\bx_p+r\underline{\eta})) )
 =\underline{\omega} (\underline{\eta}  (D_{\bx_p}   f(\bx_p+r\underline{\eta})) ),
 $$
and similarly,
$$D_{\bx_p} (\underline{\omega} (\underline{\eta}  f(\bx_p-r\underline{\eta})))
 =\underline{\omega} (\underline{\eta}  (D_{\bx_p}   f(\bx_p-r\underline{\eta})) ).
 $$
By Proposition \ref{artin}, $$
  (\underline{\omega}\partial_{r}) (\underline{\omega}(\underline{\eta}f(\bx_p+r\underline{\eta})))=
  (\underline{\omega}\partial_{r} \underline{\omega})(\underline{\eta}f(\bx_p+r\underline{\eta})))=
  -\underline{\eta} \partial_{r}(f(\bx_p+r\underline{\eta})),$$
and similarly,
 $$
  (\underline{\omega}\partial_{r}) (\underline{\omega}(\underline{\eta}f(\bx_p-r\underline{\eta})))=
  -\underline{\eta} \partial_{r}(f(\bx_p-r\underline{\eta})).$$
Hence, we have
 \begin{eqnarray*}
2  (\underline{\omega}\partial_{r}) h(\bx_p+r\underline{\omega})
=(\underline{\omega}\partial_{r}) (f(\bx_p+r\underline{\eta})+ f(\bx_p-r\underline{\eta}))-\underline{\eta}  \partial_r ( f(\bx_p-r\underline{\eta})-    f(\bx_p+r\underline{\eta})),
\end{eqnarray*}
and, in view of that  $f\in\mathcal {GSM}(\Omega)$ and Proposition \ref{artin},
 \begin{eqnarray*}
2   D_{\bx_p}  h(\bx_p+r\underline{\omega})&=&D_{\bx_p}(f(\bx_p+r\underline{\eta})+ f(\bx_p-r\underline{\eta}))\\
& &+
 \underline{\omega} (\underline{\eta}  (D_{\bx_p}   f(\bx_p-r\underline{\eta})-D_{\bx_p}   f(\bx_p+r\underline{\eta})) )
 \\
&= &D_{\bx_p}(f(\bx_p+r\underline{\eta})+ f(\bx_p-r\underline{\eta}))
\\
&&+  \underline{\omega} (\underline{\eta}  (  (\underline{\eta} \partial_r) (f(\bx_p-r\underline{\eta}))+ (\underline{\eta} \partial_r)f(\bx_p+r\underline{\eta})) )
\\
&=&D_{\bx_p}(f(\bx_p+r\underline{\eta})+ f(\bx_p-r\underline{\eta}))-\underline{\omega}       \partial_r ( f(\bx_p-r\underline{\eta})+    f(\bx_p+r\underline{\eta})),
\end{eqnarray*}
which gives that
 \begin{eqnarray*}
2  (D_{\bx_p}+\underline{\omega}\partial_{r}) h(\bx_p+r\underline{\omega})
=(D_{\bx_p}+\underline{\eta}\partial_{r})  (f(\bx_p+r\underline{\eta})+(D_{\bx_p}-\underline{\eta}\partial_{r})  (f(\bx_p-r\underline{\eta}),
\end{eqnarray*}
Now we immediately deduce that  $h\in\mathcal {GSM}(\Omega)$ if $f\in\mathcal {GSM}(\Omega)$ and the formula \eqref{Representation-Formula-eq} follows.

From   (\ref{Representation-Formula-eq}), we have
\begin{equation}\label{Representation-Formula-3}
f(\bx_p-r \underline{\omega})=\frac{1}{2} (f(\bx_p-r\underline{\eta} )+f(\bx_p+r\underline{\eta} ) )+
\frac{ 1}{2} \underline{\omega}(\underline{\eta}(  f(\bx_p+r\underline{\eta})-f(\bx_p-r\underline{\eta}))).
\end{equation}
Combining  (\ref{Representation-Formula-eq}) with (\ref{Representation-Formula-3}),  we obtain
\begin{equation}\label{F1}
f(\bx_p+r \underline{\omega} )+f(\bx_p-r \underline{\omega}  )=f(\bx_p+r\underline{\eta})+f(\bx_p-r\underline{\eta}),  \end{equation}
and
\begin{equation}\label{F2}
f(\bx_p+r \underline{\omega} )-f(\bx_p-r \underline{\omega}  )=\underline{\omega}(\underline{\eta}(  f(\bx_p-r\underline{\eta})-f(\bx_p+r\underline{\eta}))). \end{equation}
Here (\ref{F1}) means that $F_1$ does not depend on $\underline{\eta}$, and (\ref{F2}) gives by Proposition \ref{artin}
\begin{eqnarray*}
\underline{\omega}(f(\bx_p+r \underline{\omega} )-f(\bx_p-r \underline{\omega}  ))
&=&\underline{\omega}(\underline{\omega}(\underline{\eta}(  f(\bx_p-r\underline{\eta})-f(\bx_p+r\underline{\eta}))))
 \\
 &=&(\underline{\omega} \ \underline{\omega})(\underline{\eta}(  f(\bx_p-r\underline{\eta})-f(\bx_p+r\underline{\eta}))))
 \\
 &=&   \underline{\eta}(f(\bx_p+r\underline{\eta})-  f(\bx_p-r\underline{\eta}) ),
 \end{eqnarray*}
 which means that $F_2$ does not depend on $\underline{\eta}$. The proof is complete.
\end{proof}

As a corollary of the Representation Formula, we can present  an extension theorem that allows to construct a generalized partial-slice monogenic function starting from a function $f_{\underline{\eta}}$ that is defined in $\mathbb{R}^{p+1}+ \underline{\eta}\mathbb{R}$ for some $\underline{\eta}\in\mathbb S$ and is in the kernel of $(D_{\bx_p}+ \underline{\eta}\partial_r)$.

\begin{theorem}[Extension theorem]\label{extthm}
 Let $\Omega\subseteq \mathbb{O}$ be a p-symmetric slice domain.
Let $f_{\underline{\eta}}\in C^{1}(\Omega_{\underline{\eta}}, \mathbb{O})$ satisfying
$$(D_{\bx_p}+ \underline{\eta}\partial_r)f_{\underline{\eta}}(\bx_p+r\underline{\eta})=0, \qquad
\bx_p+r \underline{\eta}\in\Omega_{\underline{\eta}},
$$
for a given $\underline{\eta}\in\mathbb S$. Then, for any $\bx_p+\underline{\bx}_q=\bx_p+r\underline{\omega} \in\Omega$, the function defined by
\begin{equation*}
{\rm ext}(f_{\underline{\eta}})(\bx_p+ r\underline{\omega}):
=\frac{1}{2} (f(\bx_p+r\underline{\eta} )+f(\bx_p-r\underline{\eta}) )+
\frac{ 1}{2} \underline{\omega}(\underline{\eta} (  f(\bx_p-r\underline{\eta} )-f(\bx_p+r\underline{\eta})))
\end{equation*}
is the unique generalized partial-slice monogenic extension of $f_{\underline{\eta}}$ to the whole $\Omega$.
\end{theorem}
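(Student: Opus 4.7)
The plan is to verify four things: the formula is well-defined on $\Omega$, it agrees with $f_{\underline{\eta}}$ on $\Omega_{\underline{\eta}}$, it defines a generalized partial-slice monogenic function on $\Omega$, and it is uniquely determined. The first two are essentially bookkeeping; the monogenicity computation is the bulk of the work, but it is almost a verbatim repetition of the calculation used in the proof of Theorem \ref{Representation-Formula-SM}.

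First I would observe that, since $\Omega$ is $p$-symmetric, for any $\bx_p+r\underline{\omega}\in\Omega$ both points $\bx_p\pm r\underline{\eta}$ lie in $\Omega_{\underline{\eta}}$, so the right-hand side of the extension formula is meaningful. Next, substituting $\underline{\omega}=\underline{\eta}$ into the formula and applying Proposition \ref{artin} in the form $\underline{\eta}(\underline{\eta}\,a)=(\underline{\eta}\,\underline{\eta})a=-a$ collapses the expression to $f_{\underline{\eta}}(\bx_p+r\underline{\eta})$, giving the claimed extension property.

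The central step is to show ${\rm ext}(f_{\underline{\eta}})\in\mathcal{GSM}(\Omega)$. Setting $g(\bx_p,r):=f_{\underline{\eta}}(\bx_p+r\underline{\eta})$, the hypothesis reads $(D_{\bx_p}+\underline{\eta}\partial_r)g=0$, and a chain-rule sign flip immediately yields $(D_{\bx_p}-\underline{\eta}\partial_r)\tilde{g}=0$ for $\tilde{g}(\bx_p,r):=f_{\underline{\eta}}(\bx_p-r\underline{\eta})$. With these two identities in hand, one applies Lemma \ref{a-w-b} to move $\underline{\omega}$ past $D_{\bx_p}$ and Proposition \ref{artin} to evaluate products of the form $\underline{\omega}(\underline{\omega}\,\cdot\,)=-\,\cdot$, after which the computation of $(D_{\bx_p}+\underline{\omega}\partial_r)({\rm ext}(f_{\underline{\eta}}))(\bx_p+r\underline{\omega})=0$ runs exactly as in the proof of Theorem \ref{Representation-Formula-SM}. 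The Representation Formula proof is, essentially, the extension argument in disguise: the new input here is that the starting data live only on the single slice $\Omega_{\underline{\eta}}$ rather than on the whole $\Omega$, and this is precisely what the sign-flip observation bridges.

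Uniqueness is immediate from the identity theorem (Theorem \ref{Identity-theorem}): any two generalized partial-slice monogenic extensions of $f_{\underline{\eta}}$ coincide on $\Omega_{\underline{\eta}}$, which contains the nonempty open subset $\Omega\cap\mathbb{R}^{p+1}$ of $\mathbb{R}^{p+1}$, a $(p+1)$-dimensional smooth manifold; hence they agree throughout $\Omega$. The only conceptually nontrivial step worth flagging is the sign-flip symmetrization that converts the hypothesis on $f_{\underline{\eta}}(\bx_p+r\underline{\eta})$ into the companion equation for $f_{\underline{\eta}}(\bx_p-r\underline{\eta})$; after that, everything reduces to the algebraic manipulations already carried out in the Representation Formula.
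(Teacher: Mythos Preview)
Your proposal is correct and follows essentially the same approach as the paper: the paper's own proof simply states that monogenicity ``follows from the computations in the proof of Theorem \ref{Representation-Formula-SM}'' and that uniqueness comes from Theorem \ref{Identity-theorem}, which is exactly what you do. You have merely made explicit two details the paper leaves implicit---the well-definedness via $p$-symmetry and the chain-rule sign-flip giving $(D_{\bx_p}-\underline{\eta}\partial_r)f_{\underline{\eta}}(\bx_p-r\underline{\eta})=0$---but these are natural expansions of the same argument, not a different route.
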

\begin{proof}
The fact that $f(\bx_p+r \underline{\omega}):={\rm ext}(f_{\underline{\eta}})(\bx_p+ r \underline{\omega})$ is generalized partial-slice monogenic follows from the computations in the proof of Theorem \ref{Representation-Formula-SM}. Since $f(\bx_p+r \underline{\eta})=f_{\underline{\eta}}(\bx_p+r\underline{\eta})$ the identity   theorem in Theorem \ref{Identity-theorem} implies that the extension is unique.
\end{proof}

\section{Cauchy-Pompeiu integral formula}
\label{Sect4}
To formulate Cauchy-Pompeiu integral formula in this non-associative case, we need   some technical lemmas.
\begin{lemma}\label{E-lemma}
Let $\underline{\eta}\in \mathbb{S}$ and   $\Omega\subseteq \mathbb{O}$ be a   domain. Consider  the function
\begin{equation}\label{slice-form-E-1-slice}
\phi(\bx_{p}+r\underline{\eta})=\Phi(\bx')+\underline{\eta} \Psi (\bx') \in C^1(\Omega_{\underline{\eta}}, \mathbb{O}),
\end{equation}
and assume that
  $\Phi(\bx') =\sum_{i=0}^p \Phi_i(\bx') e_i\in \mathbb{R}^{p+1}$ and $\Psi(\bx') \in \mathbb{R}$ satisfy
 \begin{equation}\label{i-j-E-1}
\partial_{x_i} \Phi_j= \partial_{x_j}\Phi_i, \quad 1\leq i,j\leq p,
\end{equation}
and
\begin{equation}\label{i-j-r}
 \partial_{r} \Phi_i= \partial_{x_i}\Psi, \quad 1\leq i\leq p.
\end{equation}
Then for all $a\in \mathbb{O}$
$$ D_{\underline{\eta}} (\phi_{\underline{\eta}} a)=(D_{\underline{\eta}}\phi_{\underline{\eta}} )a,$$
where $ D_{\underline{\eta}}= D_{\bx_p}+\underline{\eta}\partial_r$.
\begin{proof}
Recall that for all $a, b\in \mathbb{O}$,
$$[e_0, b, a]=0, $$
thus all the terms containing $e_0$ can be omitted in the calculations below.
We have that
\begin{eqnarray*}
[D_{\underline{\eta}}, \phi_{\underline{\eta}}, a]
&=&\sum_{i=1}^{p}[e_i, \partial_{x_i} \phi_{\underline{\eta}}, a] + [ \underline{\eta},   \partial_{r}\phi_{\underline{\eta}}, a]
 \\
 &=& \sum_{i=1}^{p} \Big( \sum_{j=1}^{p} [e_i,e_j, a] \partial_{x_i} \Phi_j + \sum_{j=p+1}^{7} [e_i,e_j, a] \partial_{x_i} ( \frac{x_j}{r}\Psi)\Big)+[ \underline{\eta},   \partial_{r}\Phi, a]
 \\
 &=& \sum_{i=1}^{p}\sum_{j=p+1}^{7} [e_i,e_j, a]\partial_{x_i}( \frac{x_j}{r}\Psi) +
 \sum_{j=p+1}^{7}\sum_{i=1}^{p}[e_j, e_i, a] \frac{x_j}{r} \partial_{r}\Phi_i
 \\
  &=& \sum_{i=1}^{p}\sum_{j=p+1}^{7} [e_i,e_j, a]\frac{x_j}{r}\partial_{x_i}\Psi +
 \sum_{j=p+1}^{7}\sum_{i=1}^{p}[e_j, e_i, a] \frac{x_j}{r} \partial_{r}\Phi_i
 \\
 &=& \sum_{i=1}^{p}\sum_{j=p+1}^{7} [e_i,e_j, a]\frac{x_j}{r} (\partial_{x_i}\Psi-\partial_{r}\Phi_i)
 \\
 &=&0,
\end{eqnarray*}
where the second, third,  and last equalities follows from  (\ref{slice-form-E-1-slice}), (\ref{i-j-E-1}) and (\ref{i-j-r}), respectively.
Hence, we infer that
$$[D_{\underline{\eta}}, \phi_{\underline{\eta}}, a]=0,$$
or, equivalently, $$ D_{\underline{\eta}} (\phi_{\underline{\eta}} a)=(D_{\underline{\eta}}\phi_{\underline{\eta}} )a.$$
The proof is complete.
\end{proof}
\end{lemma}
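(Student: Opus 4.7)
The plan is to prove the equivalent statement that the associator $[D_{\underline{\eta}}, \phi_{\underline{\eta}}, a]$ vanishes for every $a \in \mathbb{O}$; by trilinearity of the associator, this is exactly $D_{\underline{\eta}}(\phi_{\underline{\eta}}a) = (D_{\underline{\eta}}\phi_{\underline{\eta}})a$. The only tools needed are the alternating property of the associator on the alternative algebra $\mathbb{O}$ (so $[e_0,\cdot,\cdot]\equiv 0$ and $[\cdot,\cdot,a]$ is antisymmetric in its first two arguments) together with the two compatibility conditions on $\Phi$ and $\Psi$.

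First I would expand $D_{\underline{\eta}} = \sum_{i=0}^{p} e_i\partial_{x_i} + \underline{\eta}\partial_r$ and use linearity of the associator in its first slot. The $i=0$ piece drops out because $[e_0,\cdot,\cdot]\equiv 0$, leaving
\[
[D_{\underline{\eta}},\phi_{\underline{\eta}},a] = \sum_{i=1}^{p}[e_i,\partial_{x_i}\phi_{\underline{\eta}},a] + [\underline{\eta},\partial_r\phi_{\underline{\eta}},a].
\]
Next I would expand the second slot using $\phi_{\underline{\eta}} = \sum_{j=0}^{p}\Phi_j e_j + \underline{\eta}\Psi$, writing $\underline{\eta} = \sum_{j=p+1}^{7}\eta_j e_j$ with $\eta_j = x_j/r$ the real components of the fixed imaginary unit on the slice. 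The expression then becomes a finite sum of elementary associators $[e_i,e_j,a]$ with $i,j\in\{1,\ldots,7\}$ multiplied by real coefficients involving $\partial_{x_i}\Phi_j$, $\partial_r \Phi_j$, $\partial_{x_i}\Psi$ and $\partial_r\Psi$.

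Then I would sort these elementary terms into three blocks and kill each block separately. The block indexed by $1 \le i,j \le p$ is $\sum_{i,j=1}^{p}[e_i,e_j,a]\partial_{x_i}\Phi_j$, which vanishes because $[e_i,e_j,a]$ is antisymmetric in $(i,j)$ while by hypothesis (\ref{i-j-E-1}) the coefficient $\partial_{x_i}\Phi_j$ is symmetric in $(i,j)$. The block with both indices $\ge p+1$, coming from $[\underline{\eta},\partial_r(\underline{\eta}\Psi),a]$, takes the form $\sum_{j,k=p+1}^{7}\eta_j\eta_k[e_j,e_k,a]\partial_r\Psi$ and vanishes for the same antisymmetric-times-symmetric reason. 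The remaining mixed block, after one application of $[e_j,e_i,a] = -[e_i,e_j,a]$ to pool the two contributions, collapses to $\sum_{i=1}^{p}\sum_{j=p+1}^{7}\eta_j[e_i,e_j,a]\bigl(\partial_{x_i}\Psi - \partial_r\Phi_i\bigr)$, which is zero by hypothesis (\ref{i-j-r}).

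The main obstacle is purely bookkeeping: correctly tracking which elementary associator comes from which index range, and managing the sign flip when swapping the first two arguments to merge the cross terms. There is no further analytic or algebraic hurdle beyond the alternating property of the associator and the two stated compatibility identities, so the argument should go through cleanly.
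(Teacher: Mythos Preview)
Your proposal is correct and follows essentially the same approach as the paper: expand the associator $[D_{\underline{\eta}},\phi_{\underline{\eta}},a]$ into elementary pieces $[e_i,e_j,a]$, drop the $e_0$ contributions, and kill the three index blocks respectively by antisymmetry of the associator against the symmetry from (\ref{i-j-E-1}), by the alternating property applied to $[\underline{\eta},\underline{\eta}\partial_r\Psi,a]$, and by hypothesis (\ref{i-j-r}) on the mixed terms. The paper's write-up is organized line by line rather than as three explicit blocks, but the content is the same.
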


\begin{lemma}\label{Cauchy-formula-lemma-2-0}
  For any $a\in \mathbb{O}$, we have for all $\underline{\omega}\in \mathbb{S}$
$$ D_{\underline{\omega}} (E(\bx)a)=0, \quad  \bx=\bx_p+r\underline{\omega}\neq 0.$$
\end{lemma}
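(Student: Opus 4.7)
The plan is to reduce the claim to Lemma \ref{E-lemma} combined with the fact, recorded in Example \ref{Cauchy-kernel-example}, that $E$ itself is generalized partial-slice monogenic. The first step is to restrict $E$ to the slice $\Omega_{\underline{\omega}} = (\mathbb{O}\setminus\{0\})\cap (\mathbb{R}^{p+1}\oplus\underline{\omega}\mathbb{R})$ and display it in the form required by Lemma \ref{E-lemma}. Writing $\bx = \bx_p + r\underline{\omega}$, one has $\overline{\bx} = \overline{\bx_p} - r\underline{\omega}$ and $|\bx|^{2} = |\bx_p|^{2} + r^{2}$, so that
\[
E(\bx_p + r\underline{\omega}) \;=\; \Phi(\bx') + \underline{\omega}\,\Psi(\bx'),\qquad \Phi(\bx') = \frac{1}{\sigma_{p+1}}\,\frac{\overline{\bx_p}}{(|\bx_p|^{2}+r^{2})^{(p+2)/2}},\quad \Psi(\bx') = \frac{-1}{\sigma_{p+1}}\,\frac{r}{(|\bx_p|^{2}+r^{2})^{(p+2)/2}},
\]
with $\Phi$ taking values in $\mathbb{R}^{p+1}$ and $\Psi$ real-valued, as Lemma \ref{E-lemma} demands.

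Next I would verify the two compatibility conditions \eqref{i-j-E-1} and \eqref{i-j-r}. Writing $\Phi_0 = c\, x_0\,(|\bx_p|^{2}+r^{2})^{-(p+2)/2}$ and $\Phi_i = -c\, x_i\,(|\bx_p|^{2}+r^{2})^{-(p+2)/2}$ for $1\le i\le p$ (with $c = 1/\sigma_{p+1}$), a direct differentiation shows that for $1\le i,j\le p$ with $i\neq j$,
\[
\partial_{x_i}\Phi_j \;=\; \frac{(p+2)\,c\,x_i x_j}{(|\bx_p|^{2}+r^{2})^{(p+4)/2}} \;=\; \partial_{x_j}\Phi_i,
\]
while for $1\le i\le p$,
\[
\partial_{r}\Phi_i \;=\; \frac{(p+2)\,c\, r\, x_i}{(|\bx_p|^{2}+r^{2})^{(p+4)/2}} \;=\; \partial_{x_i}\Psi.
\]
Thus $E_{\underline{\omega}}$ satisfies the hypotheses of Lemma \ref{E-lemma} on $\Omega_{\underline{\omega}}$.

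Applying Lemma \ref{E-lemma} with $\underline{\eta}$ replaced by $\underline{\omega}$ yields, for every $a\in\mathbb{O}$,
\[
D_{\underline{\omega}}\bigl(E_{\underline{\omega}}(\bx)\,a\bigr) \;=\; \bigl(D_{\underline{\omega}} E_{\underline{\omega}}(\bx)\bigr)\,a.
\]
Since $E\in\mathcal{GSM}^{L}(\mathbb{O}\setminus\{0\})$ by Example \ref{Cauchy-kernel-example}, the right-hand side vanishes, proving the claim for arbitrary $\underline{\omega}\in\mathbb{S}$ and $\bx = \bx_p + r\underline{\omega}\neq 0$.

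There is no serious obstacle: the core of the argument is the structural observation that the restriction of $E$ to each slice splits cleanly into a real-vector part and a real multiple of $\underline{\omega}$, so the mild non-associativity is controlled by Lemma \ref{E-lemma}. The only slightly delicate point is making sure the sign and normalization in writing $-r\underline{\omega} = \underline{\omega}(-r)$ fit into the $\Phi+\underline{\omega}\Psi$ template (with $\Psi$ scalar), which is immediate since $r$ is real.
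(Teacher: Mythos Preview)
Your proposal is correct and follows essentially the same route as the paper: both display the restriction of $E$ to a slice in the form $\Phi(\bx')+\underline{\omega}\Psi(\bx')$ with the same $\Phi$ and $\Psi$, verify the symmetry conditions \eqref{i-j-E-1} and \eqref{i-j-r} needed for Lemma~\ref{E-lemma}, and then combine that lemma with the fact (Example~\ref{Cauchy-kernel-example}) that $D_{\underline{\omega}}E=0$. The only cosmetic difference is that the paper records the full symmetry $\partial_j E_i=\partial_i E_j$ for all $i,j$ and reads off both conditions from it, whereas you compute \eqref{i-j-E-1} and \eqref{i-j-r} separately.
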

\begin{proof}
 Recall the Cauchy kernel in Example \ref{Cauchy-kernel-example}
 $$E(\bx)=\sum_{i=0}^{7}E_{i}(\bx)e_i, \quad E_{i}(\bx)=\frac{-1}{\sigma_{p+1}}\frac{x_i}{|\bx|^{p+2}}, \ i=1,\ldots,7, $$
It is immediate that the Cauchy kernel $E$ satisfies
 \begin{equation}\label{Eij}
\partial_j E_{i}= \frac{1}{\sigma_{p+1}}((p+2)\frac{x_ix_j}{|\bx|^{p+4}}-\frac{\delta_{ij}}{|\bx|^{p+2}})=\partial_i E_{j},\ i,j=1,\ldots,7,\end{equation}
and takes the form (\ref{slice-form-E-1-slice}) for any $\underline{\omega}\in \mathbb{S}$ with
$$ \Phi(\bx')=\frac{\overline{\bx_p}}{(|\bx_p|^{2}+r^{2})^{\frac{p+2}{2}}}, \   \Psi (\bx')=\frac{-r}{(|\bx_p|^{2}+r^{2})^{\frac{p+2}{2}}},$$
which   satisfy (\ref{i-j-r}).

Hence   Lemma \ref{E-lemma} gives, for all $\underline{\omega}\in \mathbb{S}$ and $a\in \mathbb{O}$,
$$ D_{\underline{\omega}} (E(\bx)a)=(D_{\underline{\omega}} E(\bx))a=0, \quad  \bx=\bx_p+r\underline{\omega}\neq 0,$$
which concludes  the proof. \end{proof}

\begin{lemma}\label{Cauchy-formula-lemma}
Let $\underline{\eta}\in \mathbb{S}$ and   $\Omega\subseteq \mathbb{O}$ be a   domain. If $\phi=\sum_{i=0}^{7}\phi_ie_i\in C^1(\Omega, \mathbb{O}) $  satisfies
\begin{equation}\label{i-j}
\partial_{x_i}\phi_j= \partial_{x_j}\phi_i, \quad 1\leq i\leq p,   1\leq   j\leq 7,
\end{equation}
and $\phi$ is of the form
\begin{equation}\label{slice-form-2}
\phi(\bx_{p}+r\underline{\eta})=\Phi(\bx')+\underline{\eta} \Psi (\bx'),
\end{equation}
 where   $\Phi(\bx')=\sum_{i=0}^p \phi(\bx')e_i \in  \mathbb{R}^{p+1}$  and $\Psi(\bx') \in \mathbb{R}$, then  for any $a\in \mathbb{O}$
$$\sum_{i=0}^{7}[e_i, D_{\underline{\eta}} \phi_i, a]=0.$$
\end{lemma}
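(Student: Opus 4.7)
The plan is to expand $\sum_{i=0}^7 [e_i, D_{\underline{\eta}}\phi_i, a]$ via the trilinearity of the associator, split the result into natural blocks, and kill each block using either the alternating property of $[\cdot,\cdot,\cdot]$ or one of the two hypotheses (the differential relation (\ref{i-j}) and the slice form (\ref{slice-form-2})).

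First, writing $D_{\underline{\eta}} = \sum_{k=0}^p e_k\partial_{x_k} + \underline{\eta}\partial_r$ and using trilinearity together with the fact that the associator vanishes as soon as one slot is $1$, I would obtain
\begin{equation*}
\sum_{i=0}^7 [e_i, D_{\underline{\eta}}\phi_i, a] = \sum_{i=1}^7\sum_{k=1}^p \partial_{x_k}\phi_i\,[e_i, e_k, a] + \sum_{i=1}^7 \partial_r\phi_i\,[e_i, \underline{\eta}, a].
\end{equation*}
On the block $1 \leq i, k \leq p$, hypothesis (\ref{i-j}) makes the coefficient symmetric in $(i,k)$ while alternation makes $[e_i, e_k, a]$ antisymmetric in $(i,k)$; relabelling shows this block equals its own negative and so vanishes. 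For $i \in \{p+1,\ldots,7\}$, $k \in \{1,\ldots,p\}$ the slice form (\ref{slice-form-2}) gives $\phi_i = \eta_i\Psi$, so $\partial_{x_k}\phi_i = \eta_i\partial_{x_k}\Psi$; pulling $\sum_{i=p+1}^7\eta_i e_i = \underline{\eta}$ through the associator and using alternation once more, this block becomes $-\sum_{k=1}^p\partial_{x_k}\Psi\,[e_k, \underline{\eta}, a]$. In the second sum, the $i > p$ piece collapses to $\partial_r\Psi\,[\underline{\eta},\underline{\eta}, a] = 0$ by alternation, and the $1 \leq i \leq p$ piece rewrites as $\sum_{i=1}^p \partial_r\Phi_i\,[e_i, \underline{\eta}, a]$ since $\phi_i = \Phi_i$ on the slice.

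The crux is then to show $\partial_r\Phi_i = \partial_{x_i}\Psi$ for $1 \leq i \leq p$: applying (\ref{i-j}) with $j \in \{p+1,\ldots,7\}$ yields $\partial_{x_j}\phi_i = \partial_{x_i}\phi_j = \eta_j\partial_{x_i}\Psi$ on $\Omega_{\underline{\eta}}$, whence
\begin{equation*}
\partial_r\Phi_i = \partial_r\phi_i = \sum_{j=p+1}^7 \eta_j\partial_{x_j}\phi_i = \Bigl(\sum_{j=p+1}^7 \eta_j^2\Bigr)\partial_{x_i}\Psi = \partial_{x_i}\Psi,
\end{equation*}
using that $\underline{\eta}\in\mathbb S$. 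The two surviving blocks then cancel exactly, giving the assertion.

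The main obstacle I anticipate is the subtle interplay between hypothesis (\ref{i-j})---a genuinely off-slice derivative condition on $\phi\in C^1(\Omega,\mathbb O)$---and the form (\ref{slice-form-2}), which only prescribes $\phi$ on the slice $\Omega_{\underline{\eta}}$. The resolution is that (\ref{i-j}) converts the off-slice $\partial_{x_j}$-derivatives with $j > p$ into on-slice $\partial_{x_i}$-derivatives with $i \leq p$, and together with $|\underline{\eta}|=1$ this forces the Cauchy--Riemann-type identity $\partial_r\Phi_i = \partial_{x_i}\Psi$ (precisely the hypothesis of Lemma \ref{E-lemma}), which drives the final cancellation.
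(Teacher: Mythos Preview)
Your proof is correct and follows essentially the same approach as the paper's: expand the associator trilinearly, kill the $1\le i,k\le p$ block via the symmetry in (\ref{i-j}) against the alternating property of the associator, and use the slice form together with (\ref{i-j}) to handle the cross terms. The only cosmetic difference is that you collapse the $i>p$ contributions into $\underline{\eta}$ and isolate the Cauchy--Riemann relation $\partial_r\Phi_i=\partial_{x_i}\Psi$ explicitly (nicely tying back to the hypothesis of Lemma~\ref{E-lemma}), whereas the paper expands $\underline{\eta}=\sum_{j>p}\tfrac{x_j}{r}e_j$, rewrites $\tfrac{x_j}{r}\partial_r\phi_i$ as $\partial_{x_j}\phi_i$, and finishes by swapping indices via (\ref{i-j}).
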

\begin{proof}
Recall that for all $a, b\in \mathbb{O}$,
$[e_0, b, a]=0, $ so that we deduce the following chain of equalities
\begin{eqnarray*}
\sum_{i=0}^{7}[e_i, D_{\underline{\eta}} \phi_i, a]
&=&\sum_{i=1}^{7}[e_i, D_{\underline{\eta}} \phi_i, a]
\\
&=&\sum_{i=1}^{7}[e_i, D_{\bx_p} \phi_i, a] +\sum_{i=1}^{7}[e_i,  \underline{\eta}  \partial_{r}\phi_i, a]
 \\
 &=& \sum_{i=1}^{7} [e_i,\sum_{j=1}^{p}e_j  \partial_{x_j} \phi_i, a] +\sum_{i=1}^{7}[e_i\partial_{r}\phi_i,  \underline{\eta}, a]
 \\
 &=& \sum_{i=1}^{7} \sum_{j=1}^{p} [e_i,e_j, a]\partial_{x_j} \phi_i + [\partial_{r}\phi_{\eta},  \underline{\eta}, a].
\end{eqnarray*}
In view of (\ref{i-j}) and (\ref{slice-form-2}), we have
$$\sum_{1\leq i,j\leq p}   [e_i,e_j, a]\partial_{x_j} \phi_i =0, \   [\partial_{r}\phi_{\eta},  \underline{\eta}, a]=[\partial_{r}\Phi,  \underline{\eta}, a].$$
Hence,
\begin{eqnarray*}
\sum_{i=0}^{7}[e_i, D_{\underline{\eta}} \phi_i, a]
&=& \sum_{i=p+1}^{7} \sum_{j=1}^{p} [e_i,e_j, a]\partial_{x_j} \phi_i +\sum_{i=1}^{p}[e_i\partial_{r}\phi_i,  \underline{\eta}, a]
\\
&=&  \sum_{i=p+1}^{7} \sum_{j=1}^{p} [e_i,e_j, a]\partial_{x_j} \phi_i +\sum_{i=1}^{p}[e_i,  \underline{\eta}, a]\partial_{r}\phi_i
 \\
 &=& \sum_{i=p+1}^{7} \sum_{j=1}^{p} [e_i,e_j, a]\partial_{x_j} \phi_i +\sum_{i=1}^{p}\sum_{j=p+1}^{7} [e_i,  e_j, a] \frac{x_j}{r}\partial_{r}\phi_i
 \\
 &=& \sum_{i=p+1}^{7} \sum_{j=1}^{p} [e_i,e_j, a]\partial_{x_j} \phi_i +\sum_{i=1}^{p}\sum_{j=p+1}^{7} [e_i,  e_j, a] \partial_{x_j}\phi_i
 \\
 &=&\sum_{i=p+1}^{7} \sum_{j=1}^{p} [e_i,e_j, a]\partial_{x_j} \phi_i +\sum_{j=1}^{p}\sum_{i=p+1}^{7} [e_j,  e_i, a] \partial_{x_i}\phi_j.
\end{eqnarray*}
Finally,  recalling (\ref{i-j}), we get
$$\sum_{i=0}^{7}[e_i, D_{\underline{\eta}} \phi_i, a]=\sum_{i=p+1}^{7} \sum_{j=1}^{p} ([e_i,e_j, a]+[e_j,  e_i, a] ) \partial_{x_j} \phi_i=0,$$
which  completes the proof.
\end{proof}

\begin{lemma}\label{Cauchy-formula-lemma-E}
  For any $a\in \mathbb{O}$, we have
$$\sum_{i=0}^{7}[e_i, D_{\underline{\omega}} E_i(\bx), a]=0, \quad  \bx=\bx_p+r\underline{\omega}\neq 0.$$
\end{lemma}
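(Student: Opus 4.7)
The plan is to obtain Lemma \ref{Cauchy-formula-lemma-E} as an immediate specialization of Lemma \ref{Cauchy-formula-lemma}, applied to $\phi := E$ and $\underline{\eta} := \underline{\omega}$ on the domain $\Omega = \mathbb{O}\setminus\{0\}$. Both hypotheses needed are already verified inside the proof of Lemma \ref{Cauchy-formula-lemma-2-0}, so the work reduces to recalling them and invoking the earlier lemma.

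First I would verify hypothesis (\ref{i-j}). The identity (\ref{Eij}) established in the proof of Lemma \ref{Cauchy-formula-lemma-2-0} gives the symmetric relations
$$\partial_{x_j} E_i = \frac{1}{\sigma_{p+1}}\left((p+2)\frac{x_i x_j}{|\bx|^{p+4}} - \frac{\delta_{ij}}{|\bx|^{p+2}}\right) = \partial_{x_i} E_j,\qquad i,j=1,\dots,7.$$
In particular, restricting $i$ to the range $1\leq i\leq p$, the hypothesis (\ref{i-j}) of Lemma \ref{Cauchy-formula-lemma} is fulfilled for $\phi = E$ on $\mathbb{O}\setminus\{0\}$.

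Next I would verify hypothesis (\ref{slice-form-2}). Using $\overline{\bx} = \overline{\bx_p} - r\underline{\omega}$ for $\bx = \bx_p + r\underline{\omega}$ and $|\bx|^2 = |\bx_p|^2 + r^2$, the Cauchy kernel decomposes on each slice as
$$E(\bx_p+r\underline{\omega}) = \frac{\overline{\bx_p}}{\sigma_{p+1}(|\bx_p|^2+r^2)^{(p+2)/2}} + \underline{\omega}\cdot\frac{-r}{\sigma_{p+1}(|\bx_p|^2+r^2)^{(p+2)/2}} =: \Phi(\bx') + \underline{\omega}\Psi(\bx'),$$
where $\Phi(\bx')\in\mathbb{R}^{p+1}$ (since $\overline{\bx_p}\in\mathbb{R}^{p+1}$ divided by a real scalar) and $\Psi(\bx')\in\mathbb{R}$. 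This is precisely the slice form (\ref{slice-form-2}) required by Lemma \ref{Cauchy-formula-lemma}, holding moreover for every choice of $\underline{\omega}\in\mathbb{S}$.

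With both hypotheses in place, Lemma \ref{Cauchy-formula-lemma} applied to $\phi = E$ and $\underline{\eta} = \underline{\omega}$ yields
$$\sum_{i=0}^{7}[e_i,\, D_{\underline{\omega}} E_i(\bx),\, a] = 0,\qquad \bx = \bx_p + r\underline{\omega}\neq 0,$$
for every $a\in\mathbb{O}$, as required. No real obstacle is anticipated; the only point worth a moment's attention is to note that the verifications of (\ref{i-j}) and (\ref{slice-form-2}) are independent of $\underline{\omega}\in\mathbb{S}$, so the conclusion holds uniformly across all slices.
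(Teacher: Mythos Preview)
Your proposal is correct and follows essentially the same approach as the paper: verify that the Cauchy kernel $E$ satisfies both the symmetry condition (\ref{i-j}) (via (\ref{Eij})) and the slice form (\ref{slice-form-2}), then invoke Lemma \ref{Cauchy-formula-lemma}. The paper's proof is simply more terse, recalling that these verifications were already carried out in the proof of Lemma \ref{Cauchy-formula-lemma-2-0} rather than repeating them.
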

\begin{proof}
Recalling  that the Cauchy kernel $E$  has the form (\ref{slice-form-E-1-slice}) and satisfies (\ref{Eij}), we  can
 conclude the proof using Lemma \ref{Cauchy-formula-lemma}.
\end{proof}

\begin{lemma}\label{Cauchy-formula-lemma-2}
Let $\underline{\eta}\in \mathbb{S}$ and   $U$ be a bounded domain in $\mathbb{O}$ with smooth boundary $\partial U_{\underline{\eta}}$. If  $ \phi, f\in C^1( \overline{U_{\underline{\eta}}},\mathbb{O})$, then
 $$\int_{\partial U_{\underline{\eta}}}   \phi    (\bn f)dS= \int_{ U_{\underline{\eta}}} \Big((\phi D_{\underline{\eta}}  )f + \phi (D_{\underline{\eta}}f) - \sum_{i=0}^{7} [e_i,  D_{\underline{\eta}} \phi_i, f]\Big)dV,
$$
where  $\bn=\sum_{i=0}^{p}n_i  e_i+n_{p+1} \underline{\eta}$ is the unit exterior normal to $\partial U_{\underline{\eta}}$,
 $dS$ and  $dV$ stand  for  the   classical   Lebesgue surface  element and volume element  in $\mathbb{R}^{p+2}$, respectively.
\end{lemma}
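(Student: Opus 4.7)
The plan is to reduce the identity to the classical componentwise divergence theorem in $\mathbb{R}^{p+2}$ and then absorb the failure of associativity into the associator correction term on the right-hand side. I would work in coordinates $(x_0,\ldots,x_p,r)$ on the slice $U_{\underline{\eta}}$ and, for notational economy, set $e_{p+1}:=\underline{\eta}$ and $\partial_{p+1}:=\partial_r$, so that $\bn f=\sum_{i=0}^{p+1}n_i\, e_i f$ and $D_{\underline{\eta}}=\sum_{i=0}^{p+1}e_i\partial_i$.

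First, since each $n_i$ is real it commutes and associates with every octonion, so $\phi(\bn f)=\sum_{i=0}^{p+1}n_i\,\phi(e_i f)$. Applying Gauss' theorem componentwise, in each real slot of the octonion-valued ``vector field'' $(\phi(e_i f))_{i=0}^{p+1}$, rewrites the boundary integral as $\int_{U_{\underline{\eta}}}\sum_{i=0}^{p+1}\partial_i\bigl(\phi(e_i f)\bigr)\,dV$. The Leibniz rule then splits each summand as $(\partial_i\phi)(e_i f)+\phi(e_i\partial_i f)$. The ``$\phi$ on the right'' pieces immediately collapse to $\phi(D_{\underline{\eta}}f)$, supplying one of the two leading terms of the claimed right-hand side.

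Second, I would identify $\sum_i(\partial_i\phi)(e_i f)$ with $(\phi D_{\underline{\eta}})f$ modulo an associator error. The identity $a(bc)=(ab)c-[a,b,c]$, applied with $a=\partial_i\phi$, $b=e_i$, $c=f$, rewrites each piece as $((\partial_i\phi)e_i)f-[\partial_i\phi,e_i,f]$; summing over $i$ produces $(\phi D_{\underline{\eta}})f$ together with a remainder $-\sum_{i=0}^{p+1}[\partial_i\phi,e_i,f]$. This is the step where non-associativity leaves its footprint; in the Clifford case the remainder vanishes identically and no correction is needed.

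Third, I would massage the remainder into the form that appears in the statement. Expanding $\phi=\sum_{j=0}^{7}\phi_j e_j$ with real-valued $\phi_j$ and using trilinearity of the associator together with the fact that real scalars pass freely through its slots gives
\[
[\partial_i\phi,e_i,f]=\sum_{j=0}^{7}(\partial_i\phi_j)[e_j,e_i,f]=\sum_{j=0}^{7}[e_j,(\partial_i\phi_j)e_i,f].
\]
Summing over $i$ and interchanging the order of summation yields $\sum_i[\partial_i\phi,e_i,f]=\sum_{j=0}^{7}[e_j,D_{\underline{\eta}}\phi_j,f]$, since $\sum_{i=0}^{p+1}(\partial_i\phi_j)e_i$ is precisely $D_{\underline{\eta}}\phi_j$. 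The hard part is really the bookkeeping in the previous paragraph: one must be disciplined about which bracketing to carry through --- the natural grouping from the boundary data is $\phi(e_i f)$, not $(\phi e_i)f$, and converting to $(\phi D_{\underline{\eta}})f$ costs exactly the associator toll that the right-hand side advertises. Once this is done carefully, combining the three steps gives the stated formula.
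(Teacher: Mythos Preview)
Your proof is correct and follows essentially the same strategy as the paper's: reduce to the componentwise divergence theorem in $\mathbb{R}^{p+2}$ and then isolate the associator correction. The only organizational difference is that the paper decomposes $\phi=\sum_i\phi_ie_i$ from the outset and arrives directly at $\sum_i e_i\bigl((D_{\underline{\eta}}\phi_i)f\bigr)=(\phi D_{\underline{\eta}})f-\sum_i[e_i,D_{\underline{\eta}}\phi_i,f]$, whereas you keep $\phi$ intact, obtain the remainder $\sum_i[\partial_i\phi,e_i,f]$, and then expand $\phi$ in real components to match the stated form.
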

\begin{proof}
Let $\phi=\sum_{i=0}^{7}\phi_ie_i, f=\sum_{j=0}^{7}f_je_j \in C^1( \overline{U_{\underline{\eta}}},  \mathbb{O})$.
From the divergence theorem, it holds that for all real-valued functions $\phi_{i}, f_j\in C^1( \overline{U_{\underline{\eta}}})$
$$\int_{\partial U_{\underline{\eta}}}   \phi_{i}f_j   n_{k} dS
=\int_{U_{\underline{\eta}}}(   (\partial_{k}\phi_{i} )f_j + \phi_{i} (\partial_{k}f_j) ) dV,\quad k=0,1,\ldots,p,p+1. $$
By multiplying by $e_k, k=0,1,\ldots,p,$ and $\underline{\eta}$ on both sides of the above formula, respectively, and then taking summation, we get
$$\int_{\partial U_{\underline{\eta}}}   \phi_{i}f_j   \bn dS =
\int_{ U_{\underline{\eta}}} ( (D_{\underline{\eta}}\phi_{i} )f_j + \phi_{i} (D_{\underline{\eta}}f_j) )dV.$$
Multiplying by $e_j, j=0,1,\ldots,7,$ on the right side, we get
$$\int_{\partial U_{\underline{\eta}}}   \phi_{i}   \bn fdS =
\int_{ U_{\underline{\eta}}}  ((D_{\underline{\eta}}\phi_{i} )f + \phi_{i} (D_{\underline{\eta}}f) )dV,$$
which implies, by multiplying by $e_i, i=0,1,\ldots,7,$ on the left side and then taking  summation over $i$, the formula
$$\int_{\partial U_{\underline{\eta}}}   \phi    (\bn f)dS
=\int_{ U_{\underline{\eta}}} \Big( \sum_{i=0}^{7} e_i ((D_{\underline{\eta}}\phi_{i} )f) + \phi (D_{\underline{\eta}}f)\Big) dV,$$
which gives
$$\int_{\partial U_{\underline{\eta}}}   \phi    (\bn f)dS
=\int_{ U_{\underline{\eta}}}\Big( (\phi D_{\underline{\eta}}  )f  + \phi (D_{\underline{\eta}}f) -
\sum_{i=0}^{7} [e_i,  D_{\underline{\eta}} \phi_i, f]\Big)dV,$$
 as desired.
 \end{proof}

 Now we are in a position to  prove a slice version of the Cauchy-Pompeiu integral  formula.
\begin{theorem}[Cauchy-Pompeiu formula, I]\label{CauchyPompeiuslice}
Let $\underline{\eta}\in \mathbb{S}$ and   $U$ be a bounded domain in $\mathbb{O}$ with smooth boundary $\partial U_{\underline{\eta}}$. If $f\in C^1( \overline{U_{\underline{\eta}}}, \mathbb{O}) $, then
 $$f(\bx)=\int_{\partial U_{\underline{\eta}}}  E_{\by}(\bx) (\bn(\by)f(\by)) dS(\by)-
\int_{  U_{\underline{\eta}}} E_{\by}(\bx)  (D_{\underline{\eta}}f(\by)) dV(\by), \quad  \bx \in U_{\underline{\eta}},   $$
where $E_{\boldsymbol{y}}(\boldsymbol{x}):=E(\boldsymbol{y}-\boldsymbol{x})$, $\bn(\by)=\sum_{i=0}^{p}n_i (\by) e_i+n_{p+1}(\by) \underline{\eta}$ is the unit exterior normal to $\partial U_{\underline{\eta}}$ at $\by$,
 $dS$ and  $dV$ stand  for  the   classical   Lebesgue surface  element and volume element  in $\mathbb{R}^{p+2}$, respectively.
\end{theorem}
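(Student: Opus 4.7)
The plan is to apply the Stokes-type identity of Lemma \ref{Cauchy-formula-lemma-2} with $\phi(\by):=E_{\by}(\bx)=E(\by-\bx)$ on a domain obtained from $U_{\underline{\eta}}$ by excising a small ball around the singularity of the Cauchy kernel, and then let the radius of the ball shrink to zero.

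Fix $\bx\in U_{\underline{\eta}}$ and, for $\varepsilon>0$ small enough, let $B_{\varepsilon}(\bx)\subset U_{\underline{\eta}}$ denote the open ball of radius $\varepsilon$ in the $(p+2)$-dimensional slice $\mathbb R^{p+1}\oplus\underline{\eta}\mathbb R$, and set $U_{\underline{\eta},\varepsilon}:=U_{\underline{\eta}}\setminus\overline{B_{\varepsilon}(\bx)}$. On $U_{\underline{\eta},\varepsilon}$ the function $\phi(\by)=E(\by-\bx)$ is smooth, and I would check the three terms that appear on the right-hand side of Lemma \ref{Cauchy-formula-lemma-2}: the term $(\phi D_{\underline{\eta}})f$ vanishes because $E$ is right generalized partial-slice monogenic (Example \ref{Cauchy-kernel-example}) and translations in the slice preserve this property; the associator sum $\sum_{i=0}^7[e_i,D_{\underline{\eta}}\phi_i,f]$ vanishes by Lemma \ref{Cauchy-formula-lemma-E} applied to the shifted Cauchy kernel, since the identities \eqref{Eij} and the form \eqref{slice-form-E-1-slice} are preserved under the translation $\by\mapsto\by-\bx$ in the slice. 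Therefore Lemma \ref{Cauchy-formula-lemma-2} reduces to
\[
\int_{\partial U_{\underline{\eta},\varepsilon}} E_{\by}(\bx)\bigl(\bn(\by)f(\by)\bigr)\,dS(\by)
=\int_{U_{\underline{\eta},\varepsilon}} E_{\by}(\bx)\bigl(D_{\underline{\eta}}f(\by)\bigr)\,dV(\by).
\]

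Next I would split the boundary integral into its contribution on $\partial U_{\underline{\eta}}$ (with outward normal) and on $\partial B_{\varepsilon}(\bx)$ (which enters with the opposite sign, since the outward normal to $U_{\underline{\eta},\varepsilon}$ on this sphere points towards $\bx$). On the small sphere the unit outward normal is $\bn(\by)=(\by-\bx)/|\by-\bx|$ and $|\by-\bx|=\varepsilon$, so one computes
\[
E_{\by}(\bx)\bigl(\bn(\by)f(\by)\bigr)
=\frac{1}{\sigma_{p+1}\,\varepsilon^{p+3}}\,\overline{(\by-\bx)}\bigl((\by-\bx)f(\by)\bigr).
\]
Here the key non-associative point is handled by Proposition \ref{artin}: since $[\,\overline{\by-\bx},\,\by-\bx,\,f(\by)\,]=0$, the product collapses to $|\by-\bx|^2 f(\by)=\varepsilon^2 f(\by)$. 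Thus the integral over $\partial B_{\varepsilon}(\bx)$ equals the spherical average of $f$ over a $(p+1)$-sphere of radius $\varepsilon$, divided by $\sigma_{p+1}\varepsilon^{p+1}$, which tends to $f(\bx)$ by the continuity of $f$ as $\varepsilon\to 0^+$.

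Finally, I would let $\varepsilon\to 0^+$: the volume integrand $E_{\by}(\bx)(D_{\underline{\eta}}f(\by))$ is integrable on $U_{\underline{\eta}}$ because $|E_{\by}(\bx)|=O(|\by-\bx|^{-(p+1)})$ while $D_{\underline{\eta}}f$ is continuous and the singularity is of codimension zero in an open set of $\mathbb R^{p+2}$, so dominated convergence yields $\int_{U_{\underline{\eta}}} E_{\by}(\bx)(D_{\underline{\eta}}f)\,dV$. Rearranging produces exactly the stated formula. The main obstacle is the non-associativity, which enters in two places: in reducing Lemma \ref{Cauchy-formula-lemma-2} to the two-term identity (handled by Lemmas \ref{Cauchy-formula-lemma-2-0} and \ref{Cauchy-formula-lemma-E}, which were designed precisely for this purpose), and in the computation on $\partial B_{\varepsilon}(\bx)$ where Artin's theorem reduces the triple product involving $\by-\bx$, its conjugate, and $f(\by)$ to an associative one.
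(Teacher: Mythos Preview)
Your argument is correct and follows essentially the same route as the paper: apply Lemma~\ref{Cauchy-formula-lemma-2} on $U_{\underline{\eta}}$ with a small ball around $\bx$ removed, kill the right-monogenic term via Example~\ref{Cauchy-kernel-example} and the associator sum via Lemma~\ref{Cauchy-formula-lemma-E}, then handle the sphere integral with Proposition~\ref{artin} and pass to the limit. The only cosmetic difference is that the paper takes $\phi(\by)=E(\bx-\by)=-E_{\by}(\bx)$ rather than $E_{\by}(\bx)$, which is immaterial.
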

\begin{proof}
Given $\bx \in U_{\underline{\eta}}$,  denote $B(\bx,\epsilon)=\{\by \in \mathrm{H}_{\underline{\eta}}: |\by-\bx|<\epsilon\}$. Let $\phi(\by)=E_{\bx}(\by)=E(\bx-\by)=-E_{\by}(\bx)$. Then it holds that
$$\phi(\by) D_{\underline{\eta}}=0, \quad \forall\ \by\in \mathrm{H}_{\underline{\eta}}, \by\neq\bx.$$
For $\epsilon>0$ small enough, we have by  Lemmas \ref{Cauchy-formula-lemma-E} and  \ref{Cauchy-formula-lemma-2} for $f\in C^1( \overline{U_{\underline{\eta}}}, \mathbb{O}) $
 \begin{eqnarray*}
&& \int_{\partial U_{\underline{\eta}}} \phi    (\bn f)dS - \int_{\partial B(\bx,\epsilon)}   \phi    (\bn f)dS\\
&=& \int_{ U_{\underline{\eta}} \setminus B(\bx,\epsilon)}\Big( (\phi D_{\underline{\eta}}  )f + \phi (D_{\underline{\eta}}f) - \sum_{i=0}^{7} [e_i,  D_{\underline{\eta}} \phi_i, f]\Big)dV
\\
&=&\int_{ U_{\underline{\eta}} \setminus B(\bx,\epsilon)} \phi (D_{\underline{\eta}}f)   dV.
 \end{eqnarray*}
Recalling Lemma \ref{artin}, it follows that
 \begin{eqnarray*}
 \int_{\partial B(\bx,\epsilon) }   \phi    (\bn f)dS
&=&   \frac{1}{\sigma_{p+1}}\int_{\partial B(\bx,\epsilon) }  \frac{\overline{\by-\bx}}{|\by-\bx|^{p+2}}   (\frac{\by-\bx}{|\by-\bx|} f(\by))dS(\by)
\\
&=& \frac{1}{\sigma_{p+1}}\int_{\partial B(\bx,\epsilon) } \Big( \frac{\overline{\by-\bx}}{|\by-\bx|^{p+2}}   \frac{\by-\bx}{|\by-\bx|}\Big) f(\by)dS(\by)
\\
&=& \frac{1}{\epsilon^{p+1}\sigma_{p+1}}\int_{\partial B(\bx,\epsilon) }   f(\by)dS(\by)
\\
&\rightarrow &   f(\bx), \ \epsilon\rightarrow0.
 \end{eqnarray*}
Combining the two facts above, we get
 \begin{eqnarray*}
 \int_{\partial U_{\underline{\eta}}}   \phi    (\bn f)dS-f(\bx)
=\lim_{\epsilon\rightarrow0} \int_{ U_{\underline{\eta}} \setminus B(\bx,\epsilon)} \phi (D_{\underline{\eta}}f)   dV
=\int_{ U_{\underline{\eta}}  } \phi (D_{\underline{\eta}}f)   dV,
\end{eqnarray*}
i.e.
 $$f(\bx)=\int_{\partial U_{\underline{\eta}}}  E_{\by}(\bx) (\bn(\by)f(\by)) dS(\by)-
\int_{  U_{\underline{\eta}}} E_{\by}(\bx)  (D_{\underline{\eta}}f(\by)) dV(\by).$$
 The proof is complete.
\end{proof}

As a special case of Theorem  \ref{CauchyPompeiuslice}, we have
\begin{theorem}[Cauchy formula, I]\label{Cauchy-slice}
Let $\underline{\eta}\in \mathbb{S}$ and     $U$ be a bounded domain in $\mathbb{O}$ with smooth boundary $\partial U_{\underline{\eta}}$.  If $ f\in C^1( \overline{U_{\underline{\eta}}}, \mathbb{O}) $  satisfies  $D_{\underline{\eta}}f(\by)=0$ for all $\by \in U_{\underline{\eta}}$, then
 $$f(\bx)=\int_{\partial U_{\underline{\eta}}}  E_{\by}(\bx) (\bn(\by)f(\by)) dS(\by), \quad  \bx \in U_{\underline{\eta}},   $$
where  $\bn(\by)=\sum_{i=0}^{p}n_i (\by) e_i+n_{p+1}(\by) \underline{\eta}$ is the unit exterior normal to $\partial U_{\underline{\eta}}$ at $\by$,
 $dS$ and  $dV$ stand  for  the   classical   Lebesgue surface  element and volume element  in $\mathbb{R}^{p+2}$, respectively.
\end{theorem}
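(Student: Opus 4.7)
The plan is to deduce this Cauchy formula directly as a corollary of the Cauchy-Pompeiu integral formula (Theorem \ref{CauchyPompeiuslice}), which has already been established. Indeed, the hypotheses of Theorem \ref{Cauchy-slice} are a strict specialization of those of Theorem \ref{CauchyPompeiuslice}: we keep the bounded domain $U$ with smooth slice boundary $\partial U_{\underline{\eta}}$ and the regularity $f\in C^1(U_{\underline{\eta}},\mathbb{O})\cap C(\overline{U_{\underline{\eta}}},\mathbb{O})$, and we add the single extra assumption $D_{\underline{\eta}}f(\by)=0$ on $U_{\underline{\eta}}$.

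First, I would invoke Theorem \ref{CauchyPompeiuslice} verbatim to write, for each $\bx\in U_{\underline{\eta}}$,
\[
f(\bx)=\int_{\partial U_{\underline{\eta}}} E_{\by}(\bx)(\bn(\by)f(\by))\,dS(\by)-\int_{U_{\underline{\eta}}} E_{\by}(\bx)(D_{\underline{\eta}}f(\by))\,dV(\by).
\]
Then I would observe that the volume integrand is identically zero: for $\by\in U_{\underline{\eta}}$ the hypothesis $D_{\underline{\eta}}f(\by)=0$ makes $E_{\by}(\bx)(D_{\underline{\eta}}f(\by))=E_{\by}(\bx)\cdot 0 = 0$, so the volume term vanishes and only the boundary term survives, yielding exactly the claimed representation.

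There is essentially no obstacle: the work has already been done in Theorem \ref{CauchyPompeiuslice}, in particular the delicate associator manipulations of Lemmas \ref{E-lemma}--\ref{Cauchy-formula-lemma-2} and the limit argument around the singularity of $E_{\by}(\bx)$ as $\by\to\bx$. The only point worth a brief remark is that one should not be tempted to rewrite $E_{\by}(\bx)(\bn(\by)f(\by))$ as $(E_{\by}(\bx)\bn(\by))f(\by)$ in the non-associative setting of $\mathbb{O}$; the placement of parentheses in the integrand is the one dictated by the proof of Theorem \ref{CauchyPompeiuslice}, and no reassociation is needed to conclude.
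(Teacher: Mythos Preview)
Your proposal is correct and matches the paper's own treatment: the paper presents this theorem explicitly as a special case of Theorem~\ref{CauchyPompeiuslice}, so that the volume integral drops out under the hypothesis $D_{\underline{\eta}}f=0$. Your added remark about not reassociating $E_{\by}(\bx)(\bn(\by)f(\by))$ is a sensible cautionary note in the octonionic setting, though the paper does not spell it out.
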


In view of Lemma \ref{Cauchy-formula-lemma-2-0}, we can present an inverse of the Cauchy formula.
\begin{theorem}\label{Cauchy-slice-inverse}
Let $\underline{\eta}\in \mathbb{S}$ and     $U$ be a bounded domain in $\mathbb{O}$ with smooth boundary $\partial U_{\underline{\eta}}$.  For
 $ g\in C( \partial U_{\underline{\eta}}, \mathbb{O}) $, define
 $$f(\bx)=\int_{\partial U_{\underline{\eta}}}  E_{\by}(\bx) (\bn(\by)g(\by)) dS(\by), \quad  \bx \in U_{\underline{\eta}},   $$
where  $\bn(\by)=\sum_{i=0}^{p}n_i (\by) e_i+n_{p+1}(\by) \underline{\eta}$ is the unit exterior normal to $\partial U_{\underline{\eta}}$ at $\by$,
 $dS$ and  $dV$ stand  for  the   classical   Lebesgue surface  element and volume element  in $\mathbb{R}^{p+2}$, respectively.
 Then  $D_{\underline{\eta}}f(\bx)=0$ for all $\bx \in U_{\underline{\eta}}$.
\end{theorem}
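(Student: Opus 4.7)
The plan is to differentiate under the integral sign and then reduce each integrand to an instance of Lemma~\ref{Cauchy-formula-lemma-2-0}. Since $\bx\in U_{\underline{\eta}}$ ranges over an open set while $\by$ ranges over the smooth boundary $\partial U_{\underline{\eta}}$, for any compact $K\Subset U_{\underline{\eta}}$ one has $\mathrm{dist}(K,\partial U_{\underline{\eta}})>0$, so the kernel $E_{\by}(\bx)=E(\by-\bx)$ together with all of its first partial derivatives in $\bx$ is jointly continuous and uniformly bounded on $K\times\partial U_{\underline{\eta}}$. Combined with the continuity of $g$, this is enough to interchange the operator $D_{\underline{\eta}}$ with the surface integral, so it suffices to prove pointwise in $\by$ that $D_{\underline{\eta}}^{\bx}[E_{\by}(\bx)\,(\bn(\by)g(\by))]=0$ for $\bx\in U_{\underline{\eta}}$.

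Once the derivative is inside, for each fixed $\by=\by_p+s\underline{\eta}\in\partial U_{\underline{\eta}}$ I would set $a:=\bn(\by)g(\by)\in\mathbb{O}$, which is independent of $\bx$. Writing $\bz:=\by-\bx=(\by_p-\bx_p)+(s-r)\underline{\eta}$, the ordinary real-variable chain rule gives $\partial_{x_i}=-\partial_{z_i}$ for $i=0,\ldots,p$ and $\partial_r=-\partial_t$ with $t=s-r$, hence
$$D_{\underline{\eta}}^{\bx}\bigl[E(\by-\bx)\,a\bigr]=-\bigl[D_{\underline{\eta}}^{\bz}(E(\bz)\,a)\bigr]\bigg|_{\bz=\by-\bx}.$$
Because $\bx$ is interior and $\by$ lies on the boundary, $\bz\neq 0$, so Lemma~\ref{Cauchy-formula-lemma-2-0} applied with $\underline{\omega}=\underline{\eta}$ yields $D_{\underline{\eta}}^{\bz}[E(\bz)\,a]=0$. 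The integrand therefore vanishes for every $\by\in\partial U_{\underline{\eta}}$, and we conclude $D_{\underline{\eta}} f(\bx)=0$ on $U_{\underline{\eta}}$.

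The one genuinely non-trivial point is that, since $\mathbb{O}$ is non-associative, the naive identity $D_{\underline{\eta}}(E\,a)=(D_{\underline{\eta}} E)\,a$ is not free of charge: it could in principle be spoiled by the associator terms $[e_i,\partial_{x_i}E,a]$ and $[\underline{\eta},\partial_r E,a]$. This is exactly the obstacle that has been dispatched in Lemma~\ref{E-lemma}, whose structural hypotheses on the kernel are verified in the proof of Lemma~\ref{Cauchy-formula-lemma-2-0}; the associators vanish precisely because the Cauchy kernel, when restricted to the slice $\underline{\omega}=\underline{\eta}$, has the split form required by that lemma. With those two lemmas in hand, the remaining two ingredients---differentiation under the integral sign and the sign-shift chain rule for $\bz=\by-\bx$---are routine, and no further non-associativity issue arises.
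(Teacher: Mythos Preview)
Your argument is correct and is precisely the approach the paper has in mind: the paper states this theorem without a written proof, simply prefacing it with ``In view of Lemma~\ref{Cauchy-formula-lemma-2-0}'', and your differentiation under the integral sign followed by the translation $\bz=\by-\bx$ and the pointwise application of that lemma is exactly the intended fill-in. Your remark that the non-associativity obstruction is handled by Lemma~\ref{E-lemma} (via Lemma~\ref{Cauchy-formula-lemma-2-0}) is also on target.
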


Theorem \ref{Cauchy-slice} allows  to obtain several consequences as it happens in the classical case of holomorphic functions, among which the  mean value theorem and the maximum modulus principle.
\begin{theorem}[Mean value theorem]\label{mean}
Let $\underline{\eta}\in \mathbb{S}$ and     $U$ be a domain in $\mathbb{O}$ with smooth boundary $\partial U_{\underline{\eta}}$.  If $f\in C^1(U_{\underline{\eta}}, \mathbb{O}) $ satisfies  $D_{\underline{\eta}}f(\by)=0$ for all $\by \in U_{\underline{\eta}}$, then
 $$f(\bx)=\frac{1}{\sigma_{p+1}  \epsilon^{p+1}} \int_{\partial B(\bx,\epsilon) }  f(\by) dS(\by), \quad \bx \in U_{\underline{\eta}},  $$
where $B(\bx,\epsilon)=\{\by \in \mathrm{H}_{\underline{\eta}}: |\by-\bx|<\epsilon\} \subset U_{\underline{\eta}}$.
\end{theorem}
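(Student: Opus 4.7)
The plan is to apply the Cauchy formula (Theorem~\ref{Cauchy-slice}) directly on the ball $B(\bx,\epsilon)$, which by hypothesis is contained in $U_{\underline{\eta}}$ and has smooth spherical boundary. Since $f\in C^{1}(U_{\underline{\eta}},\mathbb{O})$ and $D_{\underline{\eta}}f=0$ on $U_{\underline{\eta}}$, restricting to $\overline{B(\bx,\epsilon)}$ and invoking Theorem~\ref{Cauchy-slice} yields
\[
f(\bx)=\int_{\partial B(\bx,\epsilon)}E_{\by}(\bx)\bigl(\bn(\by)f(\by)\bigr)\,dS(\by).
\]
The remaining task is to reduce the integrand to a scalar multiple of $f(\by)$ using the explicit forms of the Cauchy kernel and of the outward unit normal on a sphere.

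On $\partial B(\bx,\epsilon)$ one has $|\by-\bx|=\epsilon$ and $\bn(\by)=(\by-\bx)/\epsilon$, so that, using the definition of $E_{\by}(\bx)=\frac{1}{\sigma_{p+1}}\frac{\overline{\by-\bx}}{|\by-\bx|^{p+2}}$,
\[
E_{\by}(\bx)\bigl(\bn(\by)f(\by)\bigr)
=\frac{1}{\sigma_{p+1}\epsilon^{p+3}}\,\overline{(\by-\bx)}\bigl((\by-\bx)\,f(\by)\bigr).
\]
The main (and only substantive) obstacle is the re-association of this triple product, which is exactly where non-associativity of $\mathbb{O}$ could normally block the argument. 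I would invoke Proposition~\ref{artin}, which gives $[\overline{a},a,b]=0$ for every $a,b\in\mathbb{O}$, and hence $\overline{a}(ab)=(\overline{a}a)b=|a|^{2}b$. Taking $a=\by-\bx$ and $b=f(\by)$ collapses the integrand to $f(\by)/(\sigma_{p+1}\epsilon^{p+1})$.

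Substituting back and integrating over the sphere produces the stated identity
\[
f(\bx)=\frac{1}{\sigma_{p+1}\epsilon^{p+1}}\int_{\partial B(\bx,\epsilon)}f(\by)\,dS(\by).
\]
In essence this is the very computation that already appears inside the proof of Theorem~\ref{CauchyPompeiuslice}, where one shows $\int_{\partial B(\bx,\epsilon)}\phi(\bn f)\,dS\to f(\bx)$ as $\epsilon\to 0$; the novelty here is only that the stronger hypothesis $D_{\underline{\eta}}f=0$ makes the Cauchy--Pompeiu volume term disappear, so the equality holds for every admissible radius $\epsilon>0$, not merely in the limit $\epsilon\to 0$.
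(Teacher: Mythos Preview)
Your argument is correct and matches the paper's own approach: the paper simply states that the result follows from Theorem~\ref{Cauchy-slice} and its proof (or alternatively from slice-by-slice harmonicity via Proposition~\ref{GSM-Harm}), and your write-up carries out precisely the first of these, including the key use of Proposition~\ref{artin} to collapse $\overline{(\by-\bx)}\bigl((\by-\bx)f(\by)\bigr)$ to $\epsilon^{2}f(\by)$. Your observation that this computation already appears inside the proof of Theorem~\ref{CauchyPompeiuslice} is exactly what the paper has in mind when it refers to ``Theorem~\ref{Cauchy-slice} and its proof.''
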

Theorem \ref{mean} can be obtained easily from Theorem \ref{Cauchy-slice} and its proof,  or directly from the fact that all functions in $\mathcal{GSM}$ are  harmonic   slice-by-slice by Proposition \ref{GSM-Harm}.

\begin{theorem}{\bf (Maximum modulus principle)}
Let $\Omega\subseteq \mathbb{O}$ be a  slice domain and $f:\Omega\rightarrow \mathbb{O}$ be a  generalized partial-slice monogenic function.  If $|f|$ has a relative maximum at some point in $\Omega$, then $f$ is constant.
\end{theorem}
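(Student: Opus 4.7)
The plan is to reduce the statement to a one-slice question by means of Proposition \ref{GSM-Harm} and the Mean Value Theorem (Theorem \ref{mean}), then propagate the conclusion from the slice to all of $\Omega$ via the Identity Theorem (Theorem \ref{Identity-theorem}). Non-associativity will play essentially no role because the whole argument is carried out on a single slice $\Omega_{\underline{\omega}_0}$, where $f_{\underline{\omega}_0}$ takes values in $\mathbb{O}$ but the differential operators involved are componentwise real.

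First, I would pick a point $\bx_0\in\Omega$ at which $|f|$ attains a relative maximum $M$, and write $\bx_0=\bx_{0,p}+r_0\underline{\omega}_0$ for some $\underline{\omega}_0\in\mathbb{S}$ (any $\underline{\omega}_0$ works if $\bx_0\in\mathbb R^{p+1}$). On a small ball $B(\bx_0,\epsilon)\subset\Omega_{\underline{\omega}_0}\subseteq\mathbb{R}^{p+2}$, Theorem \ref{mean} gives
\[
f(\bx_0)=\frac{1}{\sigma_{p+1}\epsilon^{p+1}}\int_{\partial B(\bx_0,\epsilon)}f(\by)\,dS(\by),
\]
and taking norms yields $M=|f(\bx_0)|\leq \frac{1}{\sigma_{p+1}\epsilon^{p+1}}\int_{\partial B(\bx_0,\epsilon)}|f(\by)|\,dS(\by)\leq M$. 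Since $|f|\leq M$ throughout a neighborhood of $\bx_0$ and the spherical average equals $M$, continuity forces $|f|\equiv M$ on $\partial B(\bx_0,\epsilon)$, and varying $\epsilon$ gives $|f|\equiv M$ on a whole ball $B(\bx_0,\epsilon_0)\subset\Omega_{\underline{\omega}_0}$.

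Second, I would upgrade ``$|f|$ constant'' to ``$f$ constant'' on $B(\bx_0,\epsilon_0)$. By Proposition \ref{GSM-Harm}, each of the eight real components $f_i$ of $f_{\underline{\omega}_0}$ is harmonic on $\Omega_{\underline{\omega}_0}$. Hence
\[
0=\Delta_{\bx'}\bigl(|f|^2\bigr)=\Delta_{\bx'}\!\sum_{i=0}^{7} f_i^2=2\sum_{i=0}^{7}|\nabla f_i|^2,
\]
so each $\nabla f_i\equiv 0$ on the ball, and $f$ is constant there, equal to $c:=f(\bx_0)$.

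Finally, I would invoke the Identity Theorem: $f$ and the constant function $c$ agree on the ball $B(\bx_0,\epsilon_0)\subset\Omega_{\underline{\omega}_0}$, which contains a $(p+1)$-dimensional smooth manifold, and $\Omega$ is a slice domain, so Theorem \ref{Identity-theorem} implies $f\equiv c$ on all of $\Omega$. The only mildly delicate step is deducing that $|f|\equiv M$ on a full ball rather than just a sphere, but this is standard once one remarks that the mean value equality forces $|f(\by)|=M$ almost everywhere on $\partial B(\bx_0,\epsilon)$, hence everywhere by continuity, for every small enough $\epsilon$.
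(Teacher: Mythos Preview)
Your proof is correct and follows essentially the same route as the paper's: use the Mean Value Theorem on a single slice to force $|f|$ to be constant near $\bx_0$, exploit harmonicity of the real components to upgrade this to $f$ constant on a ball (your identity $\Delta_{\bx'}|f|^2=2\sum_i|\nabla f_i|^2$ is just a compact rewriting of the paper's two-step differentiation), and then invoke the Identity Theorem to propagate to all of $\Omega$.
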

\begin{proof}
 Assume that $|f|$ has a relative maximum at   $\bx =\bx_p+r  \underline{\omega}\in \Omega$ for some $\underline{\omega}\in \mathbb{S}$ and let $ \rho>0$ be small enough such that  $B_{\underline{{\omega}} }\subset \Omega_{\underline{{\omega}}} $, where $B=B(\bx,\rho)=\{\by \in \mathbb{R}^{p+q+1}: |\by-\bx|<\rho \}$.     By Theorem \ref{mean},
it follows that
 $$f(\bx)=\frac{1}{\sigma_{p+1}\rho^{p+1}}\int_{\partial B_{\underline{\omega}}} f(\by) dS(\by)=\frac{1}{\sigma_{p+1}\rho^{p+1}}\int_{\partial (B(0,\rho)_{\underline{\omega}})} f(\bx+\by) dS(\by),   $$
which implies that
$$|f(\bx)|\leq \frac{1}{\sigma_{p+1}\rho^{p+1}}\int_{\partial (B(0,\rho)_{\underline{\omega}})} |f(\bx+\by)| dS(\by)\leq|f(\bx)|.   $$
The above inequality forces that  $|f_{\underline{\omega} }|$   is  constant in a small neighbourhood of $\bx$ in $\Omega_{\underline{\omega}}$. Now let us show $f$ is constant. To see this,   write
$f_{\underline{\omega}}=\sum_{i=0}^{7}f_{i}e_{i},  \quad f_{i}\in \mathbb{R}$.
Since $|f_{\underline{\omega}}|^{2}=\sum_{i=0}^{7}f_{i}^{2}$ is constant, the derivatives of $|f_{\underline{\omega}}|^{2}$ with respect to variable $x_{j},j=0,1,\ldots,p+1,$ are zero, namely
$$\sum_{i=0}^{7} f_i(\partial_{x_j}f_{i})=0.$$
A second differentiation with  respect to the variable $x_{j},j=0,1,\ldots,p+1,$  and taking summation give that
$$0=\sum_{i=0}^{7} \sum_{j=0}^{p+1}((\partial_{x_j}f_{i})^{2}+f_{i}\partial_{x_j}^{2} f_{i})
=\sum_{i=0}^{7} \sum_{j=0}^{p+1}(\partial_{x_j}f_{i})^{2}+\sum_{i=0}^{7}f_{i} \Delta_{\bx'} f_{A}=\sum_{i=0}^{7} \sum_{j=0}^{p+1}(\partial_{x_j}f_{i})^{2},$$
where $\Delta_{\bx'}$ is the Laplacian   in $\mathbb{R}^{p+2}$.

Consequently, all $f_{i}, i=0,1,\ldots,7$ are  constant in $\Omega_{\underline{\omega}}$, and so is  $f_{\underline{\omega}}$. Therefore,
 $f$ is constant in $\Omega$  by the identity theorem in Theorem \ref{Identity-theorem}.
\end{proof}

\section{Generalized partial-slice  functions}\label{Sec4}

An open set $D$ of $\mathbb{R}^{p+2}$  is called invariant under the reflection  of the $(p+2)$-th variable if
$$ \bx'=(\bx_p,r) \in D \Longrightarrow   \bx_\diamond':=(\bx_p,-r)  \in D.$$
The  \textit{p-symmetric completion} $ \Omega_{D}$ of  $D$ is defined by
$$\Omega_{D}=\bigcup_{\underline{\omega} \in \mathbb{S}} \, \big \{\bx_p+r\underline{\omega}\  : \ \exists \ \bx_p \in \mathbb{R}_{p}^{0}\oplus \mathbb{R}_{p}^{1},\ \exists \ r\geq 0,\  \mathrm{s.t.} \ (\bx_p,r)\in D \big\}.$$
\begin{definition}
A function $F=(F_1,F_2): D\longrightarrow  \mathbb{O}^{2}$ in an open set $D\subseteq  \mathbb{R}^{p+2}$, which is invariant under the reflection  of the $(p+2)$-th variable, is called  a \textit{stem function} if
the $\mathbb{O}$-valued components  $F_1, F_2$  satisfy
$$ F_1(\bx_{\diamond}')= F_1(\bx'), \qquad  F_2(\bx_{\diamond}')=-F_2(\bx'), \qquad  \bx'=(\bx_p,r) \in D.$$
Each stem function $F$ induces a (left)  generalized partial-slice
function $f=\mathcal I(F): \Omega_{D} \longrightarrow \mathbb{O }$ given by
 $$f(\bx)=\mathcal I(F)(\bx):=F_1(\bx')+\underline{\omega} F_2(\bx'), \qquad   \boldsymbol{x}=\boldsymbol{x}_p+r\underline{\omega}   \in \Omega_{D}.$$
\end{definition}

Denote the set of  all induced  generalized partial-slice functions  on $\Omega_{D}$  by
 $$ {\mathcal{GS}}(\Omega_{D}):=\Big\{f=\mathcal I(F):    \ F \ {\mbox {is a stem function on }} D  \Big\},$$
and further set
 $${\mathcal{GS}}^{j}(\Omega_{D}):=\Big\{f=\mathcal I(F):    \ F \ {\mbox {is a}}\  C^{j} \ {\mbox {stem function on }} D  \Big\}, \quad j=0,1.$$

Now  we can establish the following formula  for generalized partial-slice functions.
\begin{theorem}  {\bf(Representation Formula, II)}  \label{Representation-Formula-SR}
Let $f\in {\mathcal{GS}}(\Omega_{D})$.  Then it holds that, for every  $\bx=\bx_p+r\underline{\omega} \in \Omega_{D}$ with $\underline{\omega}\in \mathbb{S}$,
\begin{equation*}\label{Rf of slice}
f(\bx)=(\underline{\omega}-\underline{\omega}_{2})((\underline{\omega}_{1}-\underline{\omega}_{2})^{-1}f(\bx_p+r\underline{\omega}_{1}) ) -(\underline{\omega}-\underline{\omega}_{1})((\underline{\omega}_{1}-\underline{\omega}_{2})^{-1}f(\bx_p+r\underline{\omega}_{2})),
\end{equation*}
 for all $\underline{\omega}_{1}\neq\underline{\omega}_{2}\in \mathbb{S}$.
 In particular, $\underline{\omega}_{1}=-\underline{\omega}_{2}=\underline{\eta}\in \mathbb{S},$
\begin{eqnarray*}
 f(\bx)= \frac{1}{2} (f(\bx_p+r\underline{\eta} )+f(\bx_p-r\underline{\eta} ) )+
\frac{ 1}{2} \underline{\omega} (\underline{\eta}(  f(\bx_p-r\underline{\eta} )-f(\bx_p+r\underline{\eta}))).
\end{eqnarray*}
\end{theorem}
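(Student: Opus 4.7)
The plan is to prove the formula by directly substituting the defining representation of $f$ as an induced generalized partial-slice function. For $\bx = \bx_p + r\underline{\omega} \in \Omega_{D}$ we have $f(\bx) = F_1(\bx') + \underline{\omega} F_2(\bx')$, and in particular $f_i := f(\bx_p + r\underline{\omega}_i) = F_1(\bx') + \underline{\omega}_i F_2(\bx')$ for $i = 1, 2$. Setting $a := \underline{\omega}_1 - \underline{\omega}_2$ and using only left distributivity of octonionic multiplication, the right-hand side rearranges to
\[
\underline{\omega}\bigl(a^{-1}f_1 - a^{-1}f_2\bigr) + \bigl(\underline{\omega}_1(a^{-1}f_2) - \underline{\omega}_2(a^{-1}f_1)\bigr),
\]
and it suffices to recognize the first bracket as $F_2(\bx')$ and the second as $F_1(\bx')$.

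For the first bracket, $a^{-1}f_1 - a^{-1}f_2 = a^{-1}(a F_2)$; since $a^{-1} = -a/|a|^2$ is a real multiple of $\overline{a}$, Proposition \ref{artin} yields $[a^{-1}, a, F_2] = 0$, so $a^{-1}(a F_2) = (a^{-1} a) F_2 = F_2$. For the second bracket, substituting $f_i = F_1 + \underline{\omega}_i F_2$ and distributing splits it into an $F_1$-part equal to $(\underline{\omega}_1 - \underline{\omega}_2)(a^{-1} F_1) = a(a^{-1} F_1) = F_1$ by the same argument, and a cross $F_2$-part
\[
\underline{\omega}_1\bigl(a^{-1}(\underline{\omega}_2 F_2)\bigr) - \underline{\omega}_2\bigl(a^{-1}(\underline{\omega}_1 F_2)\bigr),
\]
which must be shown to vanish. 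This cross term is the main obstacle of the argument, as no single application of a Moufang identity disposes of it in one stroke.

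To handle it I would substitute $a^{-1} = |\underline{\omega}_1 - \underline{\omega}_2|^{-2}(\underline{\omega}_2 - \underline{\omega}_1)$ (using $\overline{a} = -a$ since $a$ is imaginary) and expand each nested product using only alternativity, in particular $\underline{\omega}_i(\underline{\omega}_i X) = \underline{\omega}_i^2 X = -X$ from Proposition \ref{artin}. A short computation then shows that both $\underline{\omega}_1(a^{-1}(\underline{\omega}_2 F_2))$ and $\underline{\omega}_2(a^{-1}(\underline{\omega}_1 F_2))$ collapse to the common expression $|\underline{\omega}_1 - \underline{\omega}_2|^{-2}(\underline{\omega}_2 - \underline{\omega}_1) F_2$, so their difference vanishes. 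This establishes the main formula, and the symmetric case $\underline{\omega}_1 = -\underline{\omega}_2 = \underline{\eta}$ follows by direct substitution, since then $(\underline{\omega}_1 - \underline{\omega}_2)^{-1} = -\underline{\eta}/2$ and the cancellation uses only $\underline{\eta}^2 = -1$.
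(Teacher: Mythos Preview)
Your proof is correct and follows essentially the same approach as the paper: both arguments start from the stem-function representation $f(\bx_p+r\underline{\omega}_i)=F_1+\underline{\omega}_iF_2$ and use Proposition~\ref{artin} (namely $[a^{-1},a,\,\cdot\,]=0$, since $a^{-1}$ is a real multiple of $a$) to control the non-associativity. The only organizational difference is that the paper first \emph{solves} the linear system for $F_1,F_2$ in terms of $f(\bx_p+r\underline{\omega}_1),f(\bx_p+r\underline{\omega}_2)$ and then writes $f(\bx)=F_1+\underline{\omega}F_2$, whereas you \emph{verify} the claimed formula directly; the price you pay is the extra cross $F_2$-term $\underline{\omega}_1(a^{-1}(\underline{\omega}_2F_2))-\underline{\omega}_2(a^{-1}(\underline{\omega}_1F_2))$, which the paper's ordering avoids altogether, though your alternativity computation disposing of it is clean and correct.
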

\begin{proof}
Let $\bx=\bx_p+r\underline{\omega} \in \Omega_{D}$ with $\underline{\omega}\in \mathbb{S}$.  By definition, it follows that, for all  $\underline{\omega}_{1},\underline{\omega}_{2}\in \mathbb{S}$,
 $$f(\bx_p+r\underline{\omega}_{1})=F_1(\bx')+\underline{\omega}_{1} F_2(\bx'),$$
and
$$f(\bx_p+r\underline{\omega}_{2})=F_1(\bx')+\underline{\omega}_{2} F_2(\bx').$$
Hence, for $\underline{\omega}_{1}\neq\underline{\omega}_{2},$
 $$ F_2(\bx')=(\underline{\omega}_{1}-\underline{\omega}_{2})^{-1}(f(\bx_p+r\underline{\omega}_{1})-f(\bx_p+r\underline{\omega}_{2})),$$
and then
  \begin{eqnarray*}
 F_1(\bx')
&=&f(\bx_p+r\underline{\omega}_{2})-\underline{\omega}_{2}F_2(\bx')
 \\
 &=&f(\bx_p+r\underline{\omega}_{2})-\underline{\omega}_{2}((\underline{\omega}_{1}-\underline{\omega}_{2})^{-1}
 (f(\bx_p+r\underline{\omega}_{1})-f(\bx_p+r\underline{\omega}_{2})))
 \\
 &=&\underline{\omega}_{1}((\underline{\omega}_{1}-\underline{\omega}_{2})^{-1}f(\bx_p+r\underline{\omega}_{2}))
 -\underline{\omega}_{2}((\underline{\omega}_{1}-\underline{\omega}_{2})^{-1}
 f(\bx_p+r\underline{\omega}_{1})),
 \end{eqnarray*}
where the third equality follows from Proposition \ref{artin},
Therefore
  \begin{eqnarray*}
f(\bx)
&=&F_1(\bx')+\underline{\omega}F_2(\bx')
 \\
 &=&\underline{\omega}_{1}((\underline{\omega}_{1}-\underline{\omega}_{2})^{-1}f(\bx_p+r\underline{\omega}_{2}))-
 \underline{\omega}_{2}((\underline{\omega}_{1}-\underline{\omega}_{2})^{-1}
 f(\bx_p+r\underline{\omega}_{1}))
 \\
 & &+\underline{\omega}((\underline{\omega}_{1}-\underline{\omega}_{2})^{-1}(f(\bx_p+r\underline{\omega}_{1})-
 f(\bx_p+r\underline{\omega}_{2})))
 \\
 &=&(\underline{\omega}-\underline{\omega}_{2})((\underline{\omega}_{1}-\underline{\omega}_{2})^{-1}f(\bx_p+r\underline{\omega}_{1}) ) -(\underline{\omega}-\underline{\omega}_{1})((\underline{\omega}_{1}-\underline{\omega}_{2})^{-1}f(\bx_p+r\underline{\omega}_{2})),
\end{eqnarray*}
which completes the proof.\end{proof}

\begin{remark}{\rm
We now highlight a feature which is typical of the non-associative framework.
Note that, for $a, \by \in \mathbb{O}$ and  $\underline{\omega}, \underline{\eta} \in \mathbb{S}$, the  following two  terms
$$  \underline{\omega} (\underline{\eta}( E_{\by}(\bx_p-r \underline{\eta}) a ))
- \underline{\omega} (\underline{\eta}( E_{\by}(\bx_p+r \underline{\eta}) a)$$
and $$ \big( \underline{\omega} (\underline{\eta}E_{\by}(\bx_p-r \underline{\eta}))\big) a
- \big(\underline{\omega} (\underline{\eta} E_{\by}(\bx_p+r \underline{\eta}) )\big) a $$
do not coincide generally.

 In view of this observation, we need to carefully define the extended kernel appearing in the Cauchy formula.
 }
\end{remark}
For $a\in \mathbb{O}$,  define  the operator of left multiplication $L_{a}: \mathbb{O}\rightarrow\mathbb{O}$   given by
$$L_{a}x=ax,\quad x\in \mathbb{O}.$$

\begin{definition} Given $\by \in \mathbb{R}^{p+q+1}$,  define the   operator $\mathcal{E}_{\by}(\bx): \mathbb{O}\rightarrow  \mathbb{O}$ as
\begin{equation*}\label{slice-Cauchy-kernel}
\mathcal{E}_{\by}(\bx)=\frac{1}{2}L_{( E_{\by}(\pi_{\by}(\bx))+E_{\by}( \pi_{\by}(\bx)_{\diamond}))}+
\frac{1}{2}  L_{\underline{\omega}} L_{\underline{\eta}}  L_{ (E_{\by}( \pi_{\by}(\bx)_{\diamond})-E_{\by}( \pi_{\by}(\bx)))},
\end{equation*}
where $\bx=\bx_p+r  \underline{\omega},$ $\by=\by_p+\widetilde{r}  \underline{\eta},
\pi_{\by}(\bx)=\bx_p+r \underline{\eta} $  and $\pi_{\by}(\bx)_{\diamond}=\bx_p-r\underline{\eta}$.
\end{definition}

Now we can prove a global   version of Cauchy-Pompeiu integral formula.
\begin{theorem}[Cauchy-Pompeiu  formula, II]\label{Cauchy-Pompeiu}
Let $f\in {\mathcal{GS}}^{1}(\Omega_{D})$ and set $\Omega=\Omega_{D}$. If  $U$ is a domain in  $\mathbb{O}$ such that  $U_{\underline{\eta} }\subset \Omega_{\underline{\eta}} $  is a bounded domain in $\mathbb{R}^{p+2}$ with    smooth boundary $\partial U_{\underline{\eta}}\subset\Omega_{\underline{\eta}}$ for some $\underline{\eta}\in \mathbb{S}$, then
$$f(\bx)=\int_{\partial U_{\underline{\eta}}} \mathcal{E}_{\by}(\bx) (\bn(\by)f(\by)) dS(\by)-
\int_{  U_{\underline{\eta}}} \mathcal{E}_{\by}(\bx)  (D_{\underline{\eta}}f(\by)) dV(\by), \quad  \bx \in U,   $$
where  $\bn(\by)=\sum_{i=0}^{p}n_i(\by) e_i+n_{p+1}(\by)\underline{\eta}$ is the unit exterior normal to $\partial U_{\underline{\eta}}$ at $\by$,
 $dS$ and  $dV$ stand  for  the   classical   Lebesgue surface  element and volume element  in $\mathbb{R}^{p+2}$, respectively.
\end{theorem}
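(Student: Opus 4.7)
The plan is to reduce the theorem to the single slice $U_{\underline{\eta}}$ via the Representation Formula II (Theorem \ref{Representation-Formula-SR}) and then to apply the slice Cauchy--Pompeiu formula (Theorem \ref{CauchyPompeiuslice}) there. The operator-valued kernel $\mathcal{E}_{\by}(\bx)$ is tailored precisely so that the combination produced by the Representation Formula reassembles into one integrand applied to $\bn(\by) f(\by)$ on the boundary, respectively to $D_{\underline{\eta}} f(\by)$ in the interior.

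For $\bx = \bx_p + r\underline{\omega} \in U$, note that $\pi_{\by}(\bx) = \bx_p + r\underline{\eta}$ and $\pi_{\by}(\bx)_{\diamond} = \bx_p - r\underline{\eta}$ both lie in $U_{\underline{\eta}}$. Applying Theorem \ref{Representation-Formula-SR} with $\underline{\omega}_1 = -\underline{\omega}_2 = \underline{\eta}$ gives
\begin{equation*}
f(\bx) = \tfrac{1}{2}\bigl(f(\pi_{\by}(\bx)) + f(\pi_{\by}(\bx)_{\diamond})\bigr) + \tfrac{1}{2}\underline{\omega}\bigl(\underline{\eta}\bigl(f(\pi_{\by}(\bx)_{\diamond}) - f(\pi_{\by}(\bx))\bigr)\bigr).
\end{equation*}
Next I would apply Theorem \ref{CauchyPompeiuslice} to $f(\pi_{\by}(\bx))$ and $f(\pi_{\by}(\bx)_{\diamond})$ separately, producing four integrals (two boundary and two volume) whose integrands involve $E_{\by}(\pi_{\by}(\bx))(\bn(\by)f(\by))$, $E_{\by}(\pi_{\by}(\bx)_{\diamond})(\bn(\by)f(\by))$, and the two analogous expressions obtained by replacing $\bn(\by)f(\by)$ with $D_{\underline{\eta}}f(\by)$. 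The hypothesis $f \in \mathcal{GS}^1(\Omega_D)$ ensures that $f_{\underline{\eta}}$ has the regularity required by Theorem \ref{CauchyPompeiuslice}.

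Since $\underline{\omega}$ and $\underline{\eta}$ are constants with respect to the integration variable $\by$, left multiplication by them commutes with the integration by linearity of the operators $L_{\underline{\omega}}$ and $L_{\underline{\eta}}$. Collecting the boundary and volume contributions separately, the resulting boundary integrand applied to $a := \bn(\by) f(\by)$ is
\begin{equation*}
\tfrac{1}{2}\bigl(E_{\by}(\pi_{\by}(\bx)) + E_{\by}(\pi_{\by}(\bx)_{\diamond})\bigr)a + \tfrac{1}{2}\underline{\omega}\bigl(\underline{\eta}\bigl((E_{\by}(\pi_{\by}(\bx)_{\diamond}) - E_{\by}(\pi_{\by}(\bx)))a\bigr)\bigr),
\end{equation*}
which by definition is exactly $\mathcal{E}_{\by}(\bx)(\bn(\by) f(\by))$. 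The volume integrand matches $\mathcal{E}_{\by}(\bx)(D_{\underline{\eta}} f(\by))$ by an identical computation, and the claimed formula follows.

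The only delicate point is non-associativity: as highlighted in the remark preceding the definition of $\mathcal{E}_{\by}(\bx)$, the expressions $\underline{\omega}(\underline{\eta}(E_{\by}(\cdot)a))$ and $(\underline{\omega}(\underline{\eta} E_{\by}(\cdot)))a$ do not agree in general, so one must never collapse the three left multiplications $L_{\underline{\omega}}, L_{\underline{\eta}}, L_{E_{\by}(\cdot)}$ into multiplication by a single octonion. This is exactly why $\mathcal{E}_{\by}(\bx)$ is defined as a composition of left multiplications rather than as an octonion-valued kernel, and once the ordering of the three multiplications is respected at every step of the bookkeeping the proof reduces to the matching performed above.
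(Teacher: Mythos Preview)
Your proposal is correct and follows essentially the same route as the paper: apply the slice Cauchy--Pompeiu formula (Theorem \ref{CauchyPompeiuslice}) at the two points $\bx_p \pm r\underline{\eta}\in U_{\underline{\eta}}$, combine the results via the Representation Formula II (Theorem \ref{Representation-Formula-SR}), and identify the outcome with the operator-valued kernel $\mathcal{E}_{\by}(\bx)$. Your explicit remark on why the kernel must be a composition of left multiplications rather than an octonion is a useful addition that the paper leaves implicit.
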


\begin{proof}
Let $f\in {\mathcal{GS}}^{1}(\Omega_{D})$ and set $\Omega=\Omega_{D}$.
For a given    $\underline{\eta} \in \mathbb{S}$, we have by Theorem \ref{CauchyPompeiuslice} for
$\bx_p \pm r\underline{\eta}\in U_{\underline{\eta} }$
$$f(\bx_p \pm r\underline{\eta})=\int_{\partial U_{\underline{\eta}}} E_{\by}(\bx_p\pm r \underline{\eta}) (\bn(\by) f(\by)) dS(\by)-
\int_{U_{\underline{\eta}}} E_{\by}(\bx_p\pm r\underline{\eta})  (D_{\underline{\eta}}f(\by)) dV(\by),   $$
where $U$ is a domain in  $\mathbb{O}$ such that  $U_{\underline{\eta} }\subset \Omega_{\underline{\eta}} $  is a bounded domain in $\mathbb{R}^{p+2}$ with    smooth boundary $\partial U_{\underline{\eta}}\subset\Omega_{\underline{\eta}}$.

Hence, by using  the Representation Formula  in Theorem \ref{Representation-Formula-SR},  we  obtain for all $\underline{\omega} \in \mathbb{S}$ and  $\bx=\bx_p+r\underline{\omega} \in U$
 \[
   \begin{split}
 &\ 2 f(\bx) \\
 &=  f(\bx_p+r\underline{\eta} )+f(\bx_p-r\underline{\eta} ) +
  \underline{\omega} (\underline{\eta}(  f(\bx_p-r\underline{\eta} )-f(\bx_p+r\underline{\eta})))
   \\
   &= \int_{\partial U_{\underline{\eta}}} E_{\by}(\bx_p+r \underline{\eta}) (\bn(\by) f(\by)) dS(\by)-
\int_{U_{\underline{\eta}}} E_{\by}(\bx_p+r\underline{\eta})  (D_{\underline{\eta}}f(\by)) dV(\by)
\\
    &+\int_{\partial U_{\underline{\eta}}} E_{\by}(\bx_p-r \underline{\eta}) (\bn(\by) f(\by)) dS(\by)-
\int_{U_{\underline{\eta}}} E_{\by}(\bx_p-r\underline{\eta})  (D_{\underline{\eta}}f(\by)) dV(\by)\\
  &+\int_{\partial U_{\underline{\eta}}} \underline{\omega} (\underline{\eta}( E_{\by}(\bx_p-r \underline{\eta}) (\bn(\by) f(\by)) ))dS(\by)-
\int_{U_{\underline{\eta}}}  \underline{\omega} (\underline{\eta}( E_{\by}(\bx_p-r\underline{\eta})  (D_{\underline{\eta}}f(\by)) ))dV(\by)\\
  &-\int_{\partial U_{\underline{\eta}}} \underline{\omega} (\underline{\eta}( E_{\by}(\bx_p+r \underline{\eta}) (\bn(\by) f(\by)) ))dS(\by)
  +\int_{U_{\underline{\eta}}}  \underline{\omega} (\underline{\eta}( E_{\by}(\bx_p+r\underline{\eta})  (D_{\underline{\eta}}f(\by)) ))dV(\by)\\
& =2\int_{\partial U_{\underline{\eta}}} \mathcal{E}_{\by}(\bx) (\bn(\by)f(\by)) dS(\by)-
2\int_{  U_{\underline{\eta}}} \mathcal{E}_{\by}(\bx)  (D_{\underline{\eta}}f(\by)) dV(\by).
\end{split}
  \]
The proof is complete.
\end{proof}
Theorems \ref{Cauchy-Pompeiu}  and    \ref{Representation-Formula-SM} give  a general Cauchy integral formula for generalized partial-slice monogenic functions.
\begin{theorem}[Cauchy  formula, II]\label{Cauchy-formula-slice}
Let $\Omega\subseteq \mathbb{O}$ be a  p-symmetric slice domain and $f:\Omega\rightarrow \mathbb{O}$ be a  generalized partial-slice monogenic function. Given any $\underline{\eta} \in \mathbb{S}$,  let $U_{\underline{\eta} }\subset \Omega_{\underline{\eta}} $ be a   bounded domain in $\mathbb{R}^{p+2}$ with smooth boundary $\partial U_{\underline{\eta}}\subset\Omega_{\underline{\eta}}$. Then
$$f(\bx)=\int_{\partial U_{\underline{\eta}}} \mathcal{E}_{\by}(\bx) (\bn(\by)f(\by)) dS(\by), \quad  \bx \in U,   $$
where  the integral does not depend on the choice of $\underline{\eta}$, $\bn(\by)=\sum_{i=0}^{p}n_i(\by) e_i+n_{p+1}(\by)\underline{\eta}$ is the unit exterior normal to $\partial U_{\underline{\eta}}$ at $\by$ and $dS$  stands  for  the   classical   Lebesgue surface  element in $\mathbb{R}^{p+2}$.
\end{theorem}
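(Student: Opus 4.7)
The strategy is to assemble two ingredients already established in the paper: the Representation Formula (Theorem \ref{Representation-Formula-SM}), which lets us view a generalized partial-slice monogenic function as a generalized partial-slice function in the sense of Section \ref{Sec4}, and the Cauchy-Pompeiu formula II (Theorem \ref{Cauchy-Pompeiu}) for such functions, which we then specialize using the monogenicity hypothesis to kill the volume term.

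\textbf{Step 1: promoting $f$ to $\mathcal{GS}^{1}(\Omega)$.} Fix an arbitrary $\underline{\eta}\in\mathbb{S}$ and, for $\bx'=(\bx_p,r)$ with $\bx_p+r\underline{\eta}\in\Omega_{\underline{\eta}}$, set
\begin{align*}
F_1(\bx') &:= \tfrac{1}{2}\bigl(f(\bx_p+r\underline{\eta})+f(\bx_p-r\underline{\eta})\bigr),\\
F_2(\bx') &:= \tfrac{1}{2}\underline{\eta}\bigl(f(\bx_p-r\underline{\eta})-f(\bx_p+r\underline{\eta})\bigr).
\end{align*}
By Theorem \ref{Representation-Formula-SM}, $F_1$ and $F_2$ are independent of $\underline{\eta}$, and $f(\bx_p+r\underline{\omega})=F_1(\bx')+\underline{\omega}F_2(\bx')$ for every $\underline{\omega}\in\mathbb{S}$. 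A direct inspection shows $F_1(\bx_{\diamond}')=F_1(\bx')$ and $F_2(\bx_{\diamond}')=-F_2(\bx')$, so $(F_1,F_2)$ is a stem function. Moreover, by Remark \ref{C1-C00}, $f_{\underline{\eta}}\in C^{\infty}(\Omega_{\underline{\eta}},\mathbb{O})$, so $F_1, F_2\in C^{1}$. Consequently $f=\mathcal{I}(F_1,F_2)\in\mathcal{GS}^{1}(\Omega)$ with $\Omega=\Omega_D$ the p-symmetric completion of the associated $D\subseteq\mathbb{R}^{p+2}$.

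\textbf{Step 2: applying Cauchy-Pompeiu II and killing the volume term.} Given $\underline{\eta}\in\mathbb{S}$ and $U_{\underline{\eta}}\subset\Omega_{\underline{\eta}}$ a bounded domain with smooth boundary, Theorem \ref{Cauchy-Pompeiu} applied to the $\mathcal{GS}^{1}$-function $f$ yields
\begin{equation*}
f(\bx)=\int_{\partial U_{\underline{\eta}}}\mathcal{E}_{\by}(\bx)(\bn(\by)f(\by))\,dS(\by)-\int_{U_{\underline{\eta}}}\mathcal{E}_{\by}(\bx)(D_{\underline{\eta}}f(\by))\,dV(\by), \quad \bx\in U.
\end{equation*}
Since $f$ is generalized partial-slice monogenic and $U_{\underline{\eta}}\subset\Omega_{\underline{\eta}}$, Definition \ref{definition-slice-monogenic} gives $D_{\underline{\eta}}f_{\underline{\eta}}(\by)=0$ for every $\by\in U_{\underline{\eta}}$. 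Therefore the volume integral vanishes identically, leaving precisely the stated boundary formula. Finally, since the left-hand side $f(\bx)$ is intrinsic to $f$ and does not depend on $\underline{\eta}$, neither does the right-hand side, which proves the last assertion.

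\textbf{What I expect to be the main consideration.} There is no genuine obstacle here, because the hard analytic work has already been done in Theorems \ref{Representation-Formula-SM} and \ref{Cauchy-Pompeiu}. The only item requiring care is the verification that a generalized partial-slice monogenic function on a p-symmetric slice domain really does belong to $\mathcal{GS}^{1}$; the Representation Formula supplies both the stem-function decomposition and its independence from $\underline{\eta}$, while Remark \ref{C1-C00} delivers the smoothness. Once these are in place, the non-associativity subtleties that normally complicate octonionic arguments are already absorbed into the careful definition of the operator-valued kernel $\mathcal{E}_{\by}(\bx)$ used in Theorem \ref{Cauchy-Pompeiu}, so no further bracket manipulations are needed.
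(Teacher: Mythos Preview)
Your proposal is correct and matches the paper's approach exactly: the paper states just before Theorem \ref{Cauchy-formula-slice} that it follows from Theorems \ref{Cauchy-Pompeiu} and \ref{Representation-Formula-SM}, which is precisely your argument of promoting $f$ to $\mathcal{GS}^{1}(\Omega)$ via the Representation Formula and then applying the Cauchy--Pompeiu formula II with the volume term killed by $D_{\underline{\eta}}f_{\underline{\eta}}=0$.
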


\begin{definition}\label{definition-GSR}
Let $f \in {\mathcal{GS}}^{1}(\Omega_{D})$. The function $f$ is called generalized partial-slice regular of type $p$ if its stem function $F=(F_1,F_2)\in \mathbb{O}^{2}$ satisfies  the generalized Cauchy-Riemann equations
 \begin{eqnarray}\label{C-R}
 \left\{
\begin{array}{ll}
D_{\bx_p}  F_1- \partial_{r} F_2=0,
\\
 \overline{D}_{\bx_p}  F_2+ \partial_{r} F_1=0.
\end{array}
\right.
\end{eqnarray}
\end{definition}
Denote by $\mathcal {GSR}(\Omega_D)$ the set of all generalized partial-slice regular functions  on $\Omega_D$.
As before, the type $p$ will be omitted in the sequel.

Now we present a relationship between the set of functions  $\mathcal {GSM}$ and $\mathcal {GSR}$ defined in  p-symmetric domains.

\begin{theorem} \label{relation-GSR-GSM}

(i) For a p-symmetric domain $\Omega=\Omega_{D}$ with $\Omega  \cap \mathbb{R}^{p+1}= \emptyset$, it holds that $\mathcal {GSM}(\Omega) \supsetneqq \mathcal {GSR}(\Omega_{D})$.

(ii) For a p-symmetric domain $\Omega=\Omega_{D}$ with $\Omega  \cap \mathbb{R}^{p+1}\neq \emptyset$,  it holds  that $\mathcal {GSM}(\Omega) = \mathcal {GSR}(\Omega_{D})$.
\end{theorem}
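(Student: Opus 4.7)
\emph{Plan.} I would prove the easy inclusion $\mathcal{GSR}(\Omega_D)\subseteq\mathcal{GSM}(\Omega)$ in full generality (covering both (i) and (ii)) by a direct computation, then handle the reverse inclusion in (ii) via the Representation Formula, and finally exhibit an explicit non-slice monogenic function to witness strictness in (i). For the easy direction, take $f=\mathcal{I}(F)$ with $F=(F_1,F_2)$ satisfying \eqref{C-R}, fix $\bx=\bx_p+r\underline{\omega}$ with $\underline{\omega}\in\mathbb{S}$, and expand
\[
(D_{\bx_p}+\underline{\omega}\partial_r)(F_1+\underline{\omega}F_2)=D_{\bx_p}F_1+D_{\bx_p}(\underline{\omega}F_2)+\underline{\omega}\partial_r F_1+\underline{\omega}\partial_r(\underline{\omega}F_2).
\]
Applying Lemma \ref{a-w-b} termwise (using $\overline{e_i}=-e_i$ for $1\leq i\leq p$) turns $D_{\bx_p}(\underline{\omega}F_2)$ into $\underline{\omega}\,\overline{D}_{\bx_p}F_2$, while Proposition \ref{artin} collapses $\underline{\omega}(\underline{\omega}\partial_r F_2)$ to $-\partial_r F_2$. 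Regrouping yields $(D_{\bx_p}F_1-\partial_r F_2)+\underline{\omega}(\overline{D}_{\bx_p}F_2+\partial_r F_1)$, which vanishes by \eqref{C-R}.

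For the reverse inclusion in (ii), let $f\in\mathcal{GSM}(\Omega)$. Since $\Omega=\Omega_D$ is p-symmetric and intersects $\mathbb{R}^{p+1}$, the Representation Formula (Theorem \ref{Representation-Formula-SM}) writes $f(\bx_p+r\underline{\omega})=F_1(\bx')+\underline{\omega}F_2(\bx')$ for $r\geq 0$, with $F_1,F_2$ independent of $\underline{\omega}$; extending $F_1$ evenly and $F_2$ oddly in the last variable provides a stem function on the reflection-invariant set $D$ (parity is immediate from the closed-form expressions in Theorem \ref{Representation-Formula-SM}, and the required smoothness is granted by Remark \ref{C1-C00}), so $f=\mathcal{I}(F)$. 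Running the same computation as in the previous step backwards transforms $D_{\underline{\omega}}f=0$ into
\[
A+\underline{\omega}B=0\qquad\text{for every }\underline{\omega}\in\mathbb{S},
\]
with $A:=D_{\bx_p}F_1-\partial_r F_2$ and $B:=\overline{D}_{\bx_p}F_2+\partial_r F_1$. Picking two distinct $\underline{\omega}_1,\underline{\omega}_2\in\mathbb{S}$ and subtracting gives $(\underline{\omega}_1-\underline{\omega}_2)B=0$; if $B\neq 0$, right-multiplying by $B^{-1}$ and using the Artin theorem to associate the triple $(\underline{\omega}_1-\underline{\omega}_2,B,B^{-1})$ produces $\underline{\omega}_1=\underline{\omega}_2$, a contradiction. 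Hence $B\equiv 0$, then $A\equiv 0$, so \eqref{C-R} holds and $f\in\mathcal{GSR}(\Omega_D)$.

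For (i), the inclusion $\mathcal{GSR}(\Omega_D)\subseteq\mathcal{GSM}(\Omega)$ is already in hand, so it remains to produce $f\in\mathcal{GSM}(\Omega)\setminus\mathcal{GSR}(\Omega_D)$. Take $\Omega=\mathbb{O}\setminus\mathbb{R}^{p+1}=\Omega_D$ with $D=\mathbb{R}^{p+1}\times(\mathbb{R}\setminus\{0\})$, a p-symmetric domain not intersecting $\mathbb{R}^{p+1}$, and choose the function of Example \ref{example-1} for some fixed $\underline{\omega}_0\in\mathbb{S}$ (valid for $p\leq 5$). That function lies in $\mathcal{GSM}(\Omega)$, but if it were $\mathcal{I}(F)$ for some stem function, its restriction to the slices $\mathrm{H}_{\pm\underline{\omega}_0}$ would force $F_1\equiv 1$ and $F_2\equiv 0$, contradicting the vanishing of $f$ on every other semi-slice. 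Therefore the inclusion is strict. The main obstacle throughout is the non-associative bookkeeping: Lemma \ref{a-w-b} and Proposition \ref{artin} cleanly handle the commutation of $\underline{\omega}$ past $D_{\bx_p}$ and the collapse of $\underline{\omega}^2$, but the cancellation step ``$A+\underline{\omega}B=0$ for all $\underline{\omega}$ forces $A=B=0$'' is delicate and genuinely uses both the division-algebra structure of $\mathbb{O}$ and the Artin associativity of the subalgebra generated by $\underline{\omega}_1-\underline{\omega}_2$ and $B$.
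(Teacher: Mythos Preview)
Your proof is correct and matches the paper for the inclusion $\mathcal{GSR}(\Omega_D)\subseteq\mathcal{GSM}(\Omega)$ (same use of Lemma~\ref{a-w-b} and Proposition~\ref{artin}) and for setting up part (ii) via the Representation Formula. It diverges in two places. For the Cauchy--Riemann system in (ii), you rewrite $D_{\underline{\omega}}f=0$ as $A+\underline{\omega}B=0$ with $A,B$ independent of $\underline{\omega}$ and then vary $\underline{\omega}\in\mathbb{S}$, using that $\mathbb{O}$ has no zero divisors to conclude $A=B=0$; the paper instead substitutes the explicit expressions $F_1=\tfrac12(f(\bx_p+r\underline{\eta})+f(\bx_p-r\underline{\eta}))$ and $F_2=\tfrac12\underline{\eta}(f(\bx_p-r\underline{\eta})-f(\bx_p+r\underline{\eta}))$ and reduces $D_{\bx_p}F_1=\partial_r F_2$ and $\overline{D}_{\bx_p}F_2=-\partial_r F_1$ directly to $D_{\underline{\eta}}f_{\underline{\eta}}=0$. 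Your route is shorter and more conceptual; the paper's makes the verification of \eqref{C-R} fully explicit. For strictness in (i), you invoke the indicator-type function of Example~\ref{example-1} and show it is not even in $\mathcal{GS}(\Omega_D)$, whereas the paper takes $f(\bx)=\underline{\omega}$. One point to tighten: your counterexample is stated only on $\Omega=\mathbb{O}\setminus\mathbb{R}^{p+1}$, but the theorem claims strictness for \emph{every} p-symmetric $\Omega_D$ with $\Omega_D\cap\mathbb{R}^{p+1}=\emptyset$; since p-symmetry forces each full sphere $\bx_p+r\mathbb{S}$ into $\Omega_D$, the same contradiction (the $\underline{\omega}_0$-slice forces $F_1\equiv1$, $F_2\equiv0$, while any other slice forces $F_1\equiv0$) runs verbatim on the restriction to a general such $\Omega_D$, and saying so would close the argument.
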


 \begin{proof}
$(i)$  Let $f=\mathcal I(F)\in  \mathcal {GSR}(\Omega_{D})$ with   its stem  function $F\in C^{1}(D)$ satisfying  the generalized Cauchy-Riemann equations
 \begin{eqnarray*}
 \left\{
\begin{array}{ll}
D_{\bx_p}  F_1- \partial_{r} F_2=0,
\\
 \overline{D}_{\bx_p}  F_2+ \partial_{r} F_1=0.
\end{array}
\right.
\end{eqnarray*}
 First, we have from Lemma \ref{a-w-b}
  \begin{eqnarray}\label{D-w-F}
 D_{\bx_p}( \underline{\omega} F_2(\bx'))=\underline{\omega} (\overline{D}_{\bx_p}  F_2(\bx')),
\end{eqnarray}
and
  \begin{eqnarray}\label{D-ba-w-F}
   \overline{D}_{\bx_p}( \underline{\omega} F_2(\bx'))=\underline{\omega} (D_{\bx_p}  F_2(\bx')). \end{eqnarray}
 Then by Proposition \ref{artin} and (\ref{D-w-F})
 \begin{eqnarray*}
 D_{\underline{\omega}}f(\bx) &=& (D_{\bx_p}+\underline{\omega}\partial_{r})(F_1(\bx')+\underline{\omega} F_2(\bx'))
 \\
 &=& D_{\bx_p}  F_1(\bx')+\underline{\omega} (\underline{\omega} \partial_{r} F_2(\bx'))+ D_{\bx_p}( \underline{\omega} F_2(\bx')) +\underline{\omega} \partial_{r} F_1(\bx')
 \\
 &=& D_{\bx_p}  F_1(\bx')- \partial_{r} F_2(\bx')+ \underline{\omega}(\overline{D}_{\bx_p} F_2(\bx')  +  \partial_{r} F_1(\bx'))
 \\
 &=&0,
\end{eqnarray*}
which means that $f\in\mathcal {GSM}(\Omega)$.

To see that the inclusion is strict, consider the function
$$f(\bx) = \underline{\omega}, \quad \bx \in\Omega=\mathbb{O}\setminus \mathbb{R}^{p+1},$$
where $\bx=\bx_p+r\underline{\omega}$ with   $\bx_p\in \mathbb{R}^{p+1}, r>0, \underline{\omega}\in \mathbb{S}$.

It is immediate that $f\in  \mathcal {GSM}(\Omega)$ but $f\notin\mathcal {GSR}(\Omega)$.

$(ii)$  From the proof of   (i), we have the inclusion  $\mathcal {GSM}(\Omega) \supseteq \mathcal {GSR}(\Omega_{D})$ for $\Omega=\Omega_{D}$,  It remains to show   $\mathcal {GSM}(\Omega) \subseteq \mathcal {GSR}(\Omega_{D})$. Let $f\in  \mathcal {GSM}(\Omega)$.  Note that when the p-symmetric domain $\Omega_{D}$  is  slice, the Representation Formula in Theorem \ref{Representation-Formula-SM} holds
$$f(\bx)=f(\bx_p+r \underline{\omega})=  F_1(\bx_p,r)+\underline{\omega}F_2(\bx_p,r),$$
where $F_1, F_2$ are defined as is  Theorem \ref{Representation-Formula-SM}.
In fact,  $(F_1, F_2)$ is a stem function and satisfies  the generalized Cauchy-Riemann equations:
 \begin{eqnarray*}
D_{\bx_p}  F_1 (\bx_p,r)&=& \frac{1}{2} (D_{\bx_p} f(\bx_p+r\underline{\omega} )+D_{\bx_p}f(\bx_p-r\underline{\omega} )  )
\\
 &=&  \frac{1}{2} ((- \underline{\omega}\partial_{r})  f(\bx_p+r\underline{\omega}) +(\underline{\omega}\partial_{r})  f(\bx_p-r\underline{\omega})  )
 \\
&=& \partial_{r} F_2 (\bx_p,r),
\end{eqnarray*}
and by (\ref{D-ba-w-F}) and Proposition \ref{artin},
 \begin{eqnarray*}
\overline{D}_{\bx_p}  F_2 (\bx_p,r)&=&\frac{1}{2} \underline{\omega} (D_{\bx_p}( f(\bx_p-r\underline{\omega} )- f(\bx_p+r\underline{\omega} )  ))
\\
 &=&\frac{1}{2} \underline{\omega} ((\underline{\omega}\partial_{r} )f(\bx_p-r\underline{\omega} )+ (\underline{\omega}\partial_{r}  ) f(\bx_p+r\underline{\omega})  )\\
 &=&-\frac{1}{2}  \partial_{r} (   f(\bx_p-r\underline{\omega} )+    f(\bx_p+r\underline{\omega})  )\\
& =& -\partial_{r} F_1(\bx_p,r).
\end{eqnarray*}
Hence, $f \in\mathcal {GSR}(\Omega)$, as desired.
\end{proof}

 %Here we  point out that, in the proof of   \cite[Theorem 4.5 (ii)]{Xu-Sabadini},the minus sign was lost in  showing $\overline{D}_{\boldsymbol{x}_p}  F_2=- \partial_{r} F_1$ and it  should  be corrected as Theorem  \ref{relation-GSR-GSM}.

We point out that, in the proof of  \cite[Theorem 4.5 (ii)]{Xu-Sabadini} i.e. the equality $\overline{D}_{\boldsymbol{x}_p}  F_2=- \partial_{r} F_1$, a minus sign was lost. The minus sign is correctly shown in the proof of the above Theorem \ref{relation-GSR-GSM}.

\section{Fueter polynomials and Taylor series expansion}
We now introduce some suitable polynomials which are generalized partial-slice monogenic and are the building blocks of the Taylor expansion of generalized  partial-slice monogenic functions. We start by giving the following definition.
\begin{definition}\label{Fueter-variables}
 The so-called (left) Fueter variables are defined as
 $$ z_{\ell}(\bx)=z_{\ell}^{L}(\bx)= z_{\ell}(\bx_{p}+r\underline{\omega})= x_{\ell}+r \underline{\omega} e_\ell, \ \ell=0,1,\ldots,p.$$
Similarly, the so-called right  Fueter variables are defined as
 $$ z_{\ell}^{R}(\bx)= z_{\ell}^{R}(\bx_{p}+r\underline{\omega})=  x_{\ell}+r e_\ell  \underline{\omega},\ \ell=0,1,\ldots,p.$$
 \end{definition}
An easy calculation using Proposition \ref{artin} shows that
$$z_{\ell}^{L}(\bx)\in \mathcal{GSM}^{L}(\mathbb{O}), \ z_{\ell}^{R}(\bx)\in \mathcal{GSM}^{R}(\mathbb{O}),$$
meanwhile,
$$z_{\ell}^{L}(\bx) D_{\underline{\omega}} =  D_{\underline{\omega}} z_{\ell}^{R}(\bx)=2e_\ell.$$
\begin{remark}{\rm
Note that, when $p=0$,  all left and right  Fueter variables coincide, i.e.
$$ z_{0}^{L}(\bx)= z_{0}^{L}( x_{0}+r\underline{\omega} ) = z_{0}^{R}( x_{0}+r\underline{\omega} )=\bx.$$
In this case,  $\bx a\in \mathcal{GSM}^{L}(\mathbb{O})$  for all $a \in \mathbb{O}$, while $(\bx a)b $ does not
necessarily belong to $\mathcal{GSM}^{L}(\mathbb{O})$ for  $b \in \mathbb{O}$.
}
\end{remark}
\begin{example}\label{za}
For $p\geq1$ and $ a\in \mathbb{O}$,  the function $z_{\ell}(\bx)a $ $ (\ell\geq1)$ does not necessarily belong to $\mathcal{GSM}^{L}$.
To see this,  we take  $a=e_{2}$ and $\underline{\omega}=e_7$,  then
$$D_{\underline{\omega}} (z_{1}(\bx)a)=e_1 e_2+\underline{\omega} ((\underline{\omega}e_1) e_2)=2e_3\neq0.$$
\end{example}

A natural idea is to use  Fueter variables in Definition \ref{Fueter-variables} to construct  generalized partial-slice monogenic Fueter polynomials as in the Cliffordian   case \cite{Xu-Sabadini}.  In view of the lack of associativity in $\mathbb{O}$, we need say more about the multiplication of  ordered $n$ elements.

Given an  alignment $(x_1, x_2,\ldots ,x_n)\in \mathbb{O}^{n}$, it is known that the multiplication of  ordered $n$ $(\geq2)$ elements
$x_1 x_2\cdots x_n$  has $ \frac{(2n-2)!}{n!(n-1)!}$  different associative orders.  Let
$(x_{1}  x_{2} \cdots  x_{n})_{\otimes_{n}}$ be the product of the ordered $n$ elements $(x_1, x_2,\ldots ,x_n)$ in a fixed associative order $\otimes_{n}$. In particular, denote the multiplication  from left to right by
$$(x_1 x_2 \cdots x_n)_{L}:=(\ldots((x_1 x_2)x_3) \cdots )x_n,$$
and the multiplication  from  right to left by
$$(x_1 x_2 \cdots x_n)_{R}:=x_1(  \cdots (x_{n-2}(x_{n-1}x_n))\ldots ).$$

\begin{proposition}\label{no-order}
Let $a, x_0,x_1,  \ldots ,x_p\in \mathbb{O} $ and $(j_1,j_2,\ldots, j_k) \in \{0,1,\ldots,p\}^{k}$,  repetitions being allowed. Then the following sum is independent of the chosen associative order $\otimes_{(k+1)}$
\begin{eqnarray}\label{sum}
  \sum_{(i_1,i_2, \ldots, i_k)\in \sigma} (x_{i_1}  x_{i_2} \cdots  x_{i_k}a)_{\otimes_{(k+1)}},
   \end{eqnarray}
where the sum runs over  all distinguishable permutations $\sigma$ of $(j_1,j_2,\ldots, j_k)$. \\
In particular, we have
$$\sum_{(i_1,i_2, \ldots, i_k)\in \sigma} (x_{i_1}  x_{i_2} \cdots  x_{i_k}a)_L
=\sum_{(i_1,i_2, \ldots, i_k)\in \sigma} (x_{i_1}  x_{i_2} \cdots  x_{i_k}a)_{R}.$$
\end{proposition}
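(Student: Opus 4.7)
The plan is to recast the sum \eqref{sum} as the value of a symmetric $k$-multilinear function on $\mathbb{O}^k$ and then invoke polarization. For each associative order $\otimes_{(k+1)}$, introduce the map
\[
T_{\otimes}(y_1, \ldots, y_k) \;:=\; \sum_{\pi \in S_k} (y_{\pi(1)} y_{\pi(2)} \cdots y_{\pi(k)}\, a)_{\otimes_{(k+1)}},
\]
which is multilinear in $(y_1, \ldots, y_k) \in \mathbb{O}^k$ (every parenthesized product is multilinear in its factors by iterated distributivity, and a sum of multilinear maps is multilinear) and symmetric under permutations of the $y_i$. Setting $(y_1, \ldots, y_k) = (x_{j_1}, \ldots, x_{j_k})$ recovers the sum in \eqref{sum}, up to the multiplicative constant equal to the order of the stabilizer of the multiset $(j_1, \ldots, j_k)$ in $S_k$. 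It therefore suffices to show that $T_{\otimes}$ does not depend on the choice of $\otimes$.

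Next, since $\mathbb{O}$ is a real vector space and $\mathbb{R}$ has characteristic zero, any symmetric $k$-multilinear map $\mathbb{O}^k \to \mathbb{O}$ is completely determined by its restriction to the diagonal, via the standard polarization identity
\[
k! \, T_{\otimes}(y_1, \ldots, y_k) \;=\; \sum_{\emptyset \neq I \subseteq \{1, \ldots, k\}} (-1)^{k - |I|}\, T_{\otimes}\Bigl(\textstyle\sum_{i \in I} y_i,\, \ldots,\, \sum_{i \in I} y_i\Bigr).
\]
Hence it suffices to verify that $T_{\otimes}(y, y, \ldots, y)$ does not depend on $\otimes$ for every $y \in \mathbb{O}$. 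But on the diagonal, each summand becomes $(y \cdot y \cdots y \cdot a)_{\otimes_{(k+1)}}$, a product of $k+1$ elements all lying in the subalgebra of $\mathbb{O}$ generated by $\{y, a\}$. By the Artin theorem recalled in Subsection~2.1, that subalgebra is associative, so this product equals $y^k a$ irrespective of $\otimes$, giving $T_{\otimes}(y, \ldots, y) = k! \, y^k a$ and completing the argument.

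The main obstacle is simply recognising that polarization is the right tool: a direct combinatorial argument that passes between two parenthesizations by a sequence of Tamari-type elementary re-associations $((AB)C) \leftrightarrow (A(BC))$ forces one to analyse associator terms $[A,B,C]$ embedded inside larger non-associative contexts, and the alternating property of the associator is not in an obvious way sufficient to produce the needed cancellations under the symmetric sum. The polarization route bypasses this difficulty entirely, reducing everything to the two-generator setting where the Artin theorem trivialises the parenthesization dependence. The concluding ``in particular'' assertion comparing the left-to-right order $L$ with the right-to-left order $R$ is then an immediate special case of the general statement.
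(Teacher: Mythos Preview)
Your proof is correct and is essentially the same argument as the paper's: the paper introduces real parameters $t_1,\ldots,t_k$, sets $x=\sum_i t_i x_{j_i}$, and reads off the sum \eqref{sum} as the coefficient of $t_1\cdots t_k$ in $(\underbrace{xx\cdots x}_{k}\,a)_{\otimes_{(k+1)}}$, which by Artin equals $x^k a$ independently of $\otimes$. Extracting that multilinear coefficient is exactly polarization, so your explicit invocation of the polarization identity and the paper's coefficient-extraction device are two phrasings of the same reduction to the two-generator Artin situation.
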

\begin{proof}
Denote $ x=\sum_{i=1}^{k}t_{i}x_{j_i}\in \mathbb{O}$, where $ t_{i}\in \mathbb{R}, j_i \in \{0,1,\ldots,p\}, i=1,2,\ldots,k$.
Observe  that, for a fixed associative order $\otimes_{(k+1)}$ and  all $a\in \mathbb{O}$, the sum in (\ref{sum})
  is the coefficient of $k_0!k_2!\cdots k_p!t_{1}t_{2}\cdots t_{k}$ in $(\underbrace{x x \cdots x}_{k} a)_{\otimes_{(k+1)}}$, where $k_{\ell}$  is the appearing times of $\ell$ in $(j_1,j_2,\ldots, j_k)$, $\ell=0,1,\ldots,p$.

By Proposition  \ref{artin}, it holds that for any associative order $\otimes_{(k+1)}$
 $$(\underbrace{x x \cdots x}_{k} a)_{\otimes_{(k+1)}}=x^{k}a,$$
which means  the sum in (\ref{sum}) dees not depend on the associative order $\otimes_{(k+1)}$. The proof is complete.
\end{proof}

The case of  Proposition \ref{no-order} for $a=1$  had been obtained for Fueter variables in \cite{Liao-Li-11}  by induction.
Proposition \ref{no-order} allows to  construct  generalized partial-slice monogenic Fueter polynomials in the octonionic setting.

\begin{definition}\label{definition-Fueter}
For  $\mathrm{k}=(k_0,k_1,\ldots,k_{p})\in \mathbb{N}^{p+1}$, let   $\overrightarrow{\mathrm{k}}:=(j_1,j_2,\ldots, j_k)$   be an alignment with the number of $0$ in the alignment is $k_0$, the number of $1$ is $k_1$, and the number of $p$ is $k_{p}$, where $k=|\mathrm{k}|=k_0+k_1+\cdots+k_{p}, 0\leq j_1\leq j_2\leq \ldots\leq j_k\leq p$. Define
$$\mathcal{P}_{\mathrm{k}}(\bx)  =\mathcal{P}^{L}_{\mathrm{k}}(\bx) =\frac{1}{k!}  \sum_{(i_1,i_2, \ldots, i_k)\in \sigma(\overrightarrow{\mathrm{k}})} z_{i_1}  z_{i_2} \cdots  z_{i_k},$$
where the sum runs over the $\dfrac{k!}{\mathrm{k}!}$ different permutations $\sigma(\overrightarrow{\mathrm{k}})$ of $\overrightarrow{\mathrm{k}}$. When $\mathrm{k}=(0,\ldots,0)=\mathbf{0}$, we set $\mathcal{P}_{ \mathbf{0}}(\bx)=1$; when there is at least one negative component in $\mathrm{k}$, we set $\mathcal{P}_{\mathrm{k}}(\bx)=0$.

Similarly, we can define $\mathcal{P}^{R}_{  \mathrm{k}}$ when $z_{\ell}$ are replaced by $z_{\ell}^{R}$.
\end{definition}

\begin{remark}\label{Futer-special-case-p-0}{\rm
For $p=0$ and $\mathrm{k} =k\in \mathbb{N}$, we have by definition
%$$z_{0}^{L}(\bx)=z_{0}^{R}(\bx)=\bx,$$ and
$$\mathcal{P}^{L}_{\mathrm{k}}(\bx)=\mathcal{P}^{R}_{\mathrm{k}}(\bx)=\frac{1}{k!}\bx^{k}.$$
}
\end{remark}

In order to verify that $\mathcal{P}_{\mathrm{k}}$ belongs to $\mathcal{GSM}^{L}(\mathbb{O})$, we use a  Cauchy-Kovalevskaya extension result starting from some real analytic functions defined in a domain in $\mathbb{R}^{p+1}$.  For simplicity,  we consider here the case of polynomials defined in $\mathbb R^{p+1}$.
\begin{definition}[CK-extension]\label{Cauchy-Kovalevska-extension}
Let $f_{0}:\mathbb{R}^{p+1} \to \mathbb{O}$ be a  polynomial. Define the  generalized partial-slice Cauchy-Kovalevskaya extension (CK-extension, for short)  $CK[f_{0}]:  \mathbb{O}\to \mathbb{O}$ by
\begin{eqnarray}\label{series-CK}
 CK[f_{0}](\bx)=\sum_{k=0}^{+\infty} \frac{r^{2k}}{(2k)!}  (-\Delta_{\bx_p})^{k} f_{0}(\bx_p)
+\underline{\omega}  \sum_{k=0}^{+\infty} \frac{r^{2k+1}}{(2k+1)!}(-\Delta_{\bx_p}) ^{k}(D_{\bx_p} f_{0}(\bx_p)),
\end{eqnarray}
where $\bx=\bx_p+ r \underline{\omega}$ with $\bx_{p}\in\R^{p+1}, r \geq 0,$ and $\underline{\omega}  \in \mathbb S$.
\end{definition}

It should be pointed that $CK[f_{0}]$ is well-defined  since  the series in (\ref{series-CK})  does not depend  on $\underline{\omega}$ at the point $\bx_p$ and the series is in reality a finite sum when $f_0$ is a polynomial.

\begin{theorem}\label{CK-slice-monogenic}
Let $f_{0}:  \mathbb{R}^{p+1} \to \mathbb{O}$ be a  polynomial. Then $CK[f_0]$  is the unique extension  of $f_{0}$ to $\mathbb{O}$ which is generalized partial-slice monogenic.
\end{theorem}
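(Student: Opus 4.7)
The plan is to verify three claims in turn: (a) $CK[f_0]$ restricts to $f_0$ on $\mathbb{R}^{p+1}$; (b) $CK[f_0]\in \mathcal{GSM}(\mathbb{O})$; (c) uniqueness. Since $f_0$ is a polynomial, every $(-\Delta_{\bx_p})^k f_0$ vanishes for $k$ large enough, so the series \eqref{series-CK} is a finite sum and convergence is automatic; in particular $CK[f_0]$ is a polynomial in $(\bx_p,r,\underline{\omega})$.

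For (a), I set $r=0$ in \eqref{series-CK}: every term of the second sum vanishes, and only the $k=0$ summand of the first sum survives, yielding $f_0(\bx_p)$.

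For (b), I fix $\underline{\omega}\in\mathbb{S}$ and apply $D_{\underline{\omega}}=D_{\bx_p}+\underline{\omega}\partial_r$ to \eqref{series-CK} termwise, showing the two pieces cancel. The algebraic ingredients I will use are: (i) $\overline{D}_{\bx_p}D_{\bx_p}=\Delta_{\bx_p}$, proved by a direct componentwise computation using $\overline{e_i}e_j+\overline{e_j}e_i=2\delta_{ij}$; (ii) $D_{\bx_p}(\underline{\omega}\,g)=\underline{\omega}\,\overline{D}_{\bx_p}g$ for any $\mathbb{O}$-valued $g$, which follows by applying Lemma \ref{a-w-b} to each slot $e_i\partial_{x_i}(\underline{\omega}\,g)=\underline{\omega}(\overline{e_i}\partial_{x_i}g)$ and summing; (iii) $\underline{\omega}(\underline{\omega}\,g)=-g$, which is Proposition \ref{artin} applied to the bracket $[\underline{\omega},\underline{\omega},g]=0$ together with $\underline{\omega}^2=-1$. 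Using (ii) and (iii) on the second sum of \eqref{series-CK} gives
\[
D_{\bx_p}\Bigl(\underline{\omega}\,\textstyle\sum_{k\ge 0}\tfrac{r^{2k+1}}{(2k+1)!}(-\Delta_{\bx_p})^k D_{\bx_p}f_0\Bigr)=-\underline{\omega}\sum_{k\ge 1}\tfrac{r^{2k-1}}{(2k-1)!}(-\Delta_{\bx_p})^k f_0,
\]
which cancels the contribution of $\underline{\omega}\partial_r$ applied to the first sum; and $\underline{\omega}\partial_r$ of the second sum produces $-\sum_{k\ge 0}\tfrac{r^{2k}}{(2k)!}(-\Delta_{\bx_p})^k(D_{\bx_p}f_0)$, which cancels $D_{\bx_p}$ applied to the first sum. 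Thus $D_{\underline{\omega}}CK[f_0]=0$ for every $\underline{\omega}\in\mathbb{S}$.

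For (c), the set $\mathbb{O}$ is a slice domain in the sense of Definition \ref{slice-domain} because $\mathbb{O}\cap\mathbb{R}^{p+1}=\mathbb{R}^{p+1}\neq\emptyset$ and $\mathbb{O}_{\underline{\omega}}=\mathbb{R}^{p+1}\oplus\underline{\omega}\mathbb{R}\cong\mathbb{R}^{p+2}$ is a domain for every $\underline{\omega}\in\mathbb{S}$. If $g$ is any other generalized partial-slice monogenic extension of $f_0$, then $g-CK[f_0]\in\mathcal{GSM}(\mathbb{O})$ vanishes on the $(p+1)$-dimensional smooth manifold $\mathbb{R}^{p+1}\subset\mathbb{O}_{\underline{\omega}}$ for any (say, fixed) $\underline{\omega}$, so the Identity Theorem \ref{Identity-theorem} forces $g\equiv CK[f_0]$.

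The main obstacle is step (b): one must rebracket carefully when pulling $\underline{\omega}$ past the Cauchy--Riemann operator, and the computation of $\underline{\omega}(\underline{\omega}g)$ must be handled by Artin's/Proposition \ref{artin} rather than by \emph{a priori} writing $(\underline{\omega}\underline{\omega})g$. The rest is routine bookkeeping of indices.
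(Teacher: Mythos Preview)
Your proof is correct and follows essentially the same route as the paper: verify $D_{\underline{\omega}}CK[f_0]=0$ by the two cancellations you describe, then invoke Theorem~\ref{Identity-theorem} for uniqueness. The only cosmetic difference is that the paper cites the Moufang identities (together with Proposition~\ref{artin}) to obtain $D_{\bx_p}(\underline{\omega}(D_{\bx_p}f_0))=\underline{\omega}\Delta_{\bx_p}f_0$, whereas you reach the same identity via Lemma~\ref{a-w-b} plus $\overline{D}_{\bx_p}D_{\bx_p}=\Delta_{\bx_p}$; note that your justification of this last equality for $\mathbb{O}$-valued $f_0$ implicitly uses the alternating associator (not only $\overline{e_i}e_j+\overline{e_j}e_i=2\delta_{ij}$), so it would be cleaner to cite Proposition~\ref{artin} there as well.
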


\begin{proof}
From the Moufang identities and Proposition \ref{artin}, we have
$$D_{\bx_p} (\underline{\omega}  (D_{\bx_p} f_{0}))=(D_{\bx_p} \underline{\omega}   D_{\bx_p} )f_{0}=(\underline{\omega} \overline{D}_{\bx_p} D_{\bx_p} )f_{0}=\underline{\omega}\Delta_{\bx_p} f_{0}, $$
and
$$ \underline{\omega} (\underline{\omega} (D_{\bx_p} f_{0}) )= (\underline{\omega}\ \underline{\omega} )(D_{\bx_p} f_{0}) =-D_{\bx_p} f_{0},$$
which give
\begin{eqnarray*}
 & &D_{\underline{\omega}} CK[f_{0}](\bx_p+ r \underline{\omega})\\
 &=& (D_{\bx_p}+\underline{\omega}\partial_{r}) \Big(\sum_{k=0}^{+\infty} \frac{r^{2k}}{(2k)!}  (-\Delta_{\bx_p})^{k} f_{0}(\bx_p)\Big)\\
&& +(D_{\bx_p}+\underline{\omega}\partial_{r}) \Big( \underline{\omega}  \sum_{k=0}^{+\infty} \frac{r^{2k+1}}{(2k+1)!}(-\Delta_{\bx_p}) ^{k}(D_{\bx_p} f_{0}(\bx_p)) \Big)
 \\
&=&  \sum_{k=0}^{+\infty} \frac{r^{2k}}{(2k)!}  (-\Delta_{\bx_p})^{k}D_{\bx_p} f_{0}(\bx_p)+\underline{\omega}\sum_{k=1}^{+\infty} \frac{r^{2k-1}}{(2k-1)!}  (-\Delta_{\bx_p})^{k} f_{0}(\bx_p)
\\
& &+\underline{\omega}  \sum_{k=0}^{+\infty} \frac{r^{2k+1}}{(2k+1)!}(-\Delta_{\bx_p}) ^{k}  \Delta_{\bx_p} f_{0}(\bx_p)-
\sum_{k=0}^{+\infty} \frac{r^{2k}}{(2k)!}(-\Delta_{\bx_p}) ^{k}D_{\bx_p} f_{0}(\bx_p)
\\
&=&0,
\end{eqnarray*}
so we get $CK[f_{0}]\in \mathcal{GSM}^{L}(\mathbb{O})$.
Finally, Theorem \ref{Identity-theorem} gives the uniqueness of extension.
\end{proof}

\begin{remark}\label{CK-real-special-case}{\rm
If $f_{0}$ has values in an associative algebra, in particular if $f_0$ is  real-valued,  $CK[f_{0}]$ has the following  decomposition
\begin{eqnarray*}
  CK[f_{0}](\bx_p+ r \underline{\omega})&=&\Psi_{1}(\bx_{p},r)
+\underline{\omega}  \Psi_{2}(\bx_{p},r)\\
&=&(\partial_r+\underline{\omega} D_{\bx_p}) \sum_{k=0}^{+\infty} \frac{r^{2k+1}}{(2k+1)!}(-\Delta_{\bx_p}) ^{k}  f_{0}(\bx_p),
\end{eqnarray*}
where
 $$\Psi_{1}(\bx_{p},r)=\sum_{k=0}^{+\infty} \frac{r^{2k}}{(2k)!}  (-\Delta_{\bx_p})^{k} f_{0}(\bx_p)\in \mathbb{R}, $$
 $$\Psi_{2}(\bx_{p},r)=\sum_{k=0}^{+\infty} \frac{r^{2k+1}}{(2k+1)!}(-\Delta_{\bx_p}) ^{k} D_{\bx_p} f_{0}(\bx_p)\in \mathbb{ R}^{p+1}.$$
 }
\end{remark}

\begin{definition}\label{Cauchy-Kovalevska-extension-Fueter}
For  $\mathrm{k}=(k_0,k_1,\ldots,k_{p})\in\mathbb{N}^{p+1}$ and $\bx_{p}^{\mathrm{k}}=x_0^{k_0}\ldots x_p^{k_p}$, define
  $$V_{\mathrm{k}}(\bx)=\frac{1}{\mathrm{k}!}CK[\bx_{p}^{\mathrm{k}}](\bx),$$
 where $\mathrm{k}!=k_0!k_1!\cdots k_p!$. \\
 In particular, $z_{\ell}(\bx)=  CK[x_{\ell}](\bx)=x_\ell+r\underline{\omega} e_\ell,   \ell=0,1,\ldots,p,$
  where $\bx=\bx_p+ r \underline{\omega}$ with $\bx_{p}\in\R^{p+1}, r \geq 0,$ and $\underline{\omega}  \in \mathbb S$.

\end{definition}

Denote the \textit{commutator} by $[a,b]=ab-ba$ for $a,b\in \mathbb{O}.$
\begin{proposition}\label{V-k-relation}
 For each $\mathrm{k} \in \mathbb{N}^{p+1}$, there holds
 \begin{equation}\label{k-k-1}
\sum_{i=0}^p z_{i} V_{\mathrm{k}-\mathrm{\epsilon}_i}=|\mathrm{k}| V_{\mathrm{k}}=
 \sum_{i=0}^p V_{\mathrm{k}-\mathrm{\epsilon}_i} z_{i},
\end{equation}
 where $V_{\mathrm{k}-\mathrm{\epsilon}_i}=0$ if there is a negative integer in   $\mathrm{k}-\mathrm{\epsilon}_i$.
%\begin{equation}\label{k-k-r}\partial_{r} V_{\mathrm{k}} =\sum_{i=0}^p (\underline{\omega}e_{i}) V_{\mathrm{k}-\mathrm{\epsilon}_i}=\sum_{i=0}^p V_{\mathrm{k}-\mathrm{\epsilon}_i} (\underline{\omega}e_{i}).\end{equation}
\end{proposition}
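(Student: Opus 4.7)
My plan is to apply the uniqueness part of Theorem \ref{CK-slice-monogenic}: since $|\mathrm{k}|V_{\mathrm{k}}$ is by construction the CK-extension of $\frac{|\mathrm{k}|}{\mathrm{k}!}\bx_p^{\mathrm{k}}$, it suffices to prove that $\sum_{i=0}^{p} z_i V_{\mathrm{k}-\mathrm{\epsilon}_i}$ is generalized partial-slice monogenic on $\mathbb{O}$ and has the same restriction to $\mathbb{R}^{p+1}$. The restriction check is direct: at $r=0$ we have $z_i(\bx_p)=x_i$ and $V_{\mathrm{k}-\mathrm{\epsilon}_i}(\bx_p)=\bx_p^{\mathrm{k}-\mathrm{\epsilon}_i}/(\mathrm{k}-\mathrm{\epsilon}_i)!$ (zero when $k_i=0$), and a short calculation using $x_i\bx_p^{\mathrm{k}-\mathrm{\epsilon}_i}=\bx_p^{\mathrm{k}}$ together with $k_i/\mathrm{k}!=1/(\mathrm{k}-\mathrm{\epsilon}_i)!$ yields $\sum_i z_i V_{\mathrm{k}-\mathrm{\epsilon}_i}|_{r=0}=\frac{|\mathrm{k}|}{\mathrm{k}!}\bx_p^{\mathrm{k}}$. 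The corresponding statement for $\sum_i V_{\mathrm{k}-\mathrm{\epsilon}_i}z_i$ follows by the same calculation.

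The core task is then to show that the sums $\sum_i z_iV_{\mathrm{k}-\mathrm{\epsilon}_i}$ and $\sum_iV_{\mathrm{k}-\mathrm{\epsilon}_i}z_i$ lie in $\mathcal{GSM}^{L}(\mathbb{O})$. As Example \ref{za} already indicates, individual products of the $z_i$ with a GSM function need not themselves be GSM in the octonionic setting, so one cannot split the sum term-by-term and must exploit the symmetrization. I would proceed by strong induction on $n=|\mathrm{k}|$, setting $f_{\mathrm{k}}:=\sum_i z_iV_{\mathrm{k}-\mathrm{\epsilon}_i}-|\mathrm{k}|V_{\mathrm{k}}$ and showing $f_{\mathrm{k}}\equiv 0$; the base cases $n\le 1$ are immediate from Definition \ref{Cauchy-Kovalevska-extension-Fueter}. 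Applying the ordinary Leibniz rule for $\partial_{x_j}$ (valid without associativity) together with the identity $\partial_{x_j}V_{\mathrm{k}}=V_{\mathrm{k}-\mathrm{\epsilon}_j}$ (immediate from Definition \ref{Cauchy-Kovalevska-extension}, since $\partial_{x_j}$ commutes with both $\Delta_{\bx_p}$ and $D_{\bx_p}$ for $j\le p$), and invoking the induction hypothesis for $\mathrm{k}-\mathrm{\epsilon}_j$, one finds
\[
\partial_{x_j}f_{\mathrm{k}}=V_{\mathrm{k}-\mathrm{\epsilon}_j}+(|\mathrm{k}|-1)V_{\mathrm{k}-\mathrm{\epsilon}_j}-|\mathrm{k}|V_{\mathrm{k}-\mathrm{\epsilon}_j}=0,\qquad j=0,\ldots,p,
\]
so $f_{\mathrm{k}}$ is a polynomial independent of $x_0,\ldots,x_p$ and vanishing on $\mathbb{R}^{p+1}$.

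The main obstacle is closing the induction in the $r$-direction. I would compute $\partial_r f_{\mathrm{k}}$ using $\partial_r(z_iV_{\mathrm{k}-\mathrm{\epsilon}_i})=(\underline{\omega}e_i)V_{\mathrm{k}-\mathrm{\epsilon}_i}+z_i\partial_rV_{\mathrm{k}-\mathrm{\epsilon}_i}$, substitute $\partial_r V_{\mathrm{k}-\mathrm{\epsilon}_i}$ via the Cauchy-Riemann relation $\underline{\omega}\partial_r V_{\mathrm{k}-\mathrm{\epsilon}_i}=-D_{\bx_p}V_{\mathrm{k}-\mathrm{\epsilon}_i}=-\sum_\ell e_\ell V_{(\mathrm{k}-\mathrm{\epsilon}_i)-\mathrm{\epsilon}_\ell}$ stemming from the GSM property of each $V_{\mathrm{k}-\mathrm{\epsilon}_i}$, and then simplify the resulting non-associative products—most notably $(\underline{\omega}e_i)(\underline{\omega}B)$ arising from the decomposition $V_{\mathrm{k}-\mathrm{\epsilon}_i}=A+\underline{\omega}B$ with $A\in\mathbb{R}$, $B\in\mathbb{R}^{p+1}$ (cf.\ Theorem \ref{relation-GSR-GSM})—using Lemma \ref{a-w-b}, Proposition \ref{artin}, and the Moufang identities. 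The induction hypothesis then produces the cancellations needed to deduce $\partial_r f_{\mathrm{k}}=0$. Combined with $\partial_{x_j}f_{\mathrm{k}}=0$ for all $j\le p$ and $f_{\mathrm{k}}(\mathbf{0})=0$, this forces $f_{\mathrm{k}}\equiv 0$ on each slice $\mathrm{H}_{\underline{\omega}}$, hence on all of $\mathbb{O}$. The right-multiplication identity in \eqref{k-k-1} is established by the symmetric scheme, with $z_i$ placed on the right of $V_{\mathrm{k}-\mathrm{\epsilon}_i}$ throughout.
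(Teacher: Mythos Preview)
Your approach is essentially the paper's: induction on $|\mathrm{k}|$, restriction check at $r=0$, the computation $\partial_{x_j}\bigl(\sum_i z_iV_{\mathrm{k}-\epsilon_i}\bigr)=|\mathrm{k}|V_{\mathrm{k}-\epsilon_j}$ via the induction hypothesis, and then the delicate $r$-derivative using the decomposition $V_{\mathrm{k}-\epsilon_i}=\Phi+\underline{\omega}\Psi$ with $\Phi\in\mathbb{R}$, $\Psi\in\mathbb{R}^{p+1}$ (this comes from Remark~\ref{CK-real-special-case} rather than Theorem~\ref{relation-GSR-GSM}). The paper makes the $\partial_r$-step explicit by first isolating four associator and commutator identities (labelled Facts~1--4, including $\sum_i[\underline{\omega},e_i,V_{\mathrm{k}-\epsilon_i}]=0$ and $\sum_{i,j}[z_i,\underline{\omega}e_j,V_{\mathrm{k}-\epsilon_i-\epsilon_j}]=0$), and obtains the right-hand equality in \eqref{k-k-1} directly from Fact~3, namely $\sum_i z_iV_{\mathrm{k}-\epsilon_i}=\sum_i V_{\mathrm{k}-\epsilon_i}z_i$, rather than by a symmetric repetition of the whole scheme.
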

\begin{proof}
Fix $\underline{\omega}  \in \mathbb S$ and all functions  considered below shall be restricted to $\mathrm{H}_{\underline{\omega}}$.
In view of Remark \ref{CK-real-special-case}, we can set  $V_{\mathrm{k}}=\Phi_{\mathrm{k}} + \underline{\omega} \Psi_{\mathrm{k}} $, where $\Phi_{\mathrm{k}}\in \mathbb{R}, \Psi_{\mathrm{k} }=\sum_{i=0}^p \Psi_{\mathrm{k}, i} e_i\in \mathbb{R}^{p+1}$ with $  \Psi_{\mathrm{k}, i}\in \mathbb{R}$ for $i=0,1,\ldots,p$.
By definition, we have
\begin{equation}\label{V-i}
\partial_{x_i}V_{\mathrm{k}} =V_{\mathrm{k}-\mathrm{\epsilon}_i}, \quad 0\leq i\leq p,\end{equation}
which implies that
  \begin{equation}\label{V-i-j}
 \Psi_{\mathrm{k}-\mathrm{\epsilon}_i, j}=\Psi_{\mathrm{k}-\mathrm{\epsilon}_j, i}, \quad 0\leq i,j\leq p,
\end{equation}
and
\begin{equation}\label{V-i,j}
\partial_{x_j}  V_{\mathrm{k}-\mathrm{\epsilon}_i}=V_{\mathrm{k}-\mathrm{\epsilon}_i-\mathrm{\epsilon}_j}=V_{\mathrm{k}-\mathrm{\epsilon}_j-\mathrm{\epsilon}_i}=
\partial_{x_i}  V_{\mathrm{k}-\mathrm{\epsilon}_j}, \quad 0\leq i,j\leq p.\end{equation}
To prove the conclusion, we first prove four facts.\\
Fact 1:
$$\sum_{j=0}^p  \sum_{i=0}^p [z_i, \underline{\omega}e_j,  V_{\mathrm{k}-\mathrm{\epsilon}_i-\mathrm{\epsilon}_j}]=0.$$
To see this, we compute
 \[
   \begin{split}
  \sum_{j=0}^p  \sum_{i=0}^p [z_i, \underline{\omega}e_j,  V_{\mathrm{k}-\mathrm{\epsilon}_i-\mathrm{\epsilon}_j}]
   &= r\sum_{j=0}^p  \sum_{i=0}^p [\underline{\omega}e_i, \underline{\omega}e_j,  V_{\mathrm{k}-\mathrm{\epsilon}_i-\mathrm{\epsilon}_j}]
   \\
   &= r\sum_{0\leq i\neq j\leq p}[\underline{\omega}e_i, \underline{\omega}e_j,  V_{\mathrm{k}-\mathrm{\epsilon}_i-\mathrm{\epsilon}_j}]\\
    &= r\sum_{0\leq i< j\leq p} ([\underline{\omega}e_i, \underline{\omega}e_j,  V_{\mathrm{k}-\mathrm{\epsilon}_i-\mathrm{\epsilon}_j}]+[\underline{\omega}e_j, \underline{\omega}e_i,  V_{\mathrm{k}-\mathrm{\epsilon}_j-\mathrm{\epsilon}_i}])\\
  &= r\sum_{0\leq i< j\leq p} ([\underline{\omega}e_i, \underline{\omega}e_j,  V_{\mathrm{k}-\mathrm{\epsilon}_i-\mathrm{\epsilon}_j}]+[\underline{\omega}e_j, \underline{\omega}e_i,  V_{\mathrm{k}-\mathrm{\epsilon}_i-\mathrm{\epsilon}_j}])\\
  & =0.
  \end{split}
  \]
where the fourth  equality depends on (\ref{V-i,j}).\\
Fact 2:
$$\sum_{i=0}^p [\underline{\omega}, e_i,  V_{\mathrm{k}-\mathrm{\epsilon}_i} ]=0. $$
In fact, by Proposition \ref{artin} and Moufang identities we have the following chain of equalities
 \[
   \begin{split}
   \sum_{i=0}^p [\underline{\omega}, e_i,  V_{\mathrm{k}-\mathrm{\epsilon}_i} ]
   &= \sum_{i=1}^p \sum_{j=1}^p[\underline{\omega}, e_i,   \underline{\omega} e_j ]\Psi_{\mathrm{k}-\mathrm{\epsilon}_i, j}\\
  &=- \sum_{1\leq i\neq j\leq p}   [ e_i,\underline{\omega},  \underline{\omega} e_j ]\Psi_{\mathrm{k}-\mathrm{\epsilon}_i, j} \\
  &=
 - \sum_{1\leq i\neq j\leq p} ( (e_i \underline{\omega})(\underline{\omega} e_j)-e_i (\underline{\omega}(\underline{\omega} e_j)) )\Psi_{\mathrm{k}-\mathrm{\epsilon}_i, j}\\
  &= - \sum_{1\leq i\neq j\leq p} ( (\underline{\omega}e_i)( e_j\underline{\omega})+e_i  e_j )\Psi_{\mathrm{k}-\mathrm{\epsilon}_i, j}\\
  &=- \sum_{1\leq i\neq j\leq p} (  \underline{\omega} (e_i  e_j) \underline{\omega}+e_i  e_j )\Psi_{\mathrm{k}-\mathrm{\epsilon}_i, j}.
  \end{split}
  \]
Hence, using (\ref{V-i-j}) we have
$$\sum_{i=0}^p [\underline{\omega}, e_i,  V_{\mathrm{k}-\mathrm{\epsilon}_i} ]=- \sum_{1\leq i< j\leq p} (  \underline{\omega} (e_i  e_j+e_j  e_i) \underline{\omega}+e_i  e_j+e_j  e_i )\Psi_{\mathrm{k}-\mathrm{\epsilon}_i, j}=0,$$
and Fact 2 follows.
\\
 Fact 3: We prove that
 $$\sum_{i=0}^p  (\underline{\omega}e_i) V_{\mathrm{k}-\mathrm{\epsilon}_i}=\sum_{i=0}^p V_{\mathrm{k}-\mathrm{\epsilon}_i}  (\underline{\omega}e_i),$$
equivalently,
$$\sum_{i=0}^p  z_i V_{\mathrm{k}-\mathrm{\epsilon}_i}=\sum_{i=0}^p V_{\mathrm{k}-\mathrm{\epsilon}_i} z_i .$$
Here we only need to prove
\[
   \begin{split}
   \sum_{i=0}^p [\underline{\omega}e_i,  V_{\mathrm{k}-\mathrm{\epsilon}_i} ]
   &= \sum_{i=0}^p \sum_{j=0}^p[\underline{\omega}e_i,   \underline{\omega} e_j ]\Psi_{\mathrm{k}-\mathrm{\epsilon}_i, j}\\
  &= \sum_{0\leq i\neq j\leq p}   [ \underline{\omega}e_i,  \underline{\omega} e_j ]\Psi_{\mathrm{k}-\mathrm{\epsilon}_i, j} \\
  &=
  \sum_{0\leq i<j\leq p} ( [ \underline{\omega}e_i,  \underline{\omega} e_j ]+[ \underline{\omega}e_j,  \underline{\omega} e_i ] )\Psi_{\mathrm{k}-\mathrm{\epsilon}_i, j}= 0.
  \end{split}
  \]
Fact 4:
\begin{equation}\label{k-k-r}
\partial_{r} V_{\mathrm{k}} =\sum_{i=0}^p (\underline{\omega}e_{i}) V_{\mathrm{k}-\mathrm{\epsilon}_i}=
 \sum_{i=0}^p V_{\mathrm{k}-\mathrm{\epsilon}_i} (\underline{\omega}e_{i}).
\end{equation}
Recalling  Theorem \ref{CK-slice-monogenic}, we have $$(D_{\bx_p}   + \underline{\omega}   \partial_r ) V_{\mathrm{k}}=0,$$
which implies that by Proposition \ref{artin}, (\ref{V-i}), and Fact 2
 $$   \partial_r V_{\mathrm{k}}=  \underline{\omega} (D_{\bx_p} V_{\mathrm{k}})=  \underline{\omega}\sum_{j=0}^p  (e_j  V_{\mathrm{k}-\mathrm{\epsilon}_j}) =\sum_{j=0}^p (\underline{\omega}e_{j}) V_{\mathrm{k}-\mathrm{\epsilon}_j}.$$

Now we prove  the assertion  by induction on $k=|\mathrm{k}|$. The assertion  in (\ref{k-k-1}) is true for $k=0,1$. Assume that the assertion  in (\ref{k-k-1}) holds  for $k-1$, let us   prove it
for $k$. To this end,  we first observe that
$$V_{\mathrm{k}}(\bx_{p})=\frac{1}{\mathrm{k}!} \bx_{p}^{\mathrm{k}},$$
and then
$$(\sum_{i=0}^p z_{i} V_{\mathrm{k}-\mathrm{\epsilon}_i} )(\bx_{p})=\sum_{i=0}^p x_{i} V_{\mathrm{k}-\mathrm{\epsilon}_i}(\bx_{p})
=\sum_{i=0}^p\frac{ x_{i} }{(\mathrm{k-\mathrm{\epsilon}_i})!} \bx_{p}^{\mathrm{k}-\mathrm{\epsilon}_i}= |\mathrm{k}|  \frac{1}{\mathrm{k}!} \bx_{p}^{\mathrm{k}}=|\mathrm{k}| V_{\mathrm{k}}(\bx_{p}).$$
Hence, by Theorems \ref{Identity-theorem} and  \ref{CK-slice-monogenic} and Fact 3,  it remains to show   $\sum_{i=0}^p z_{i} V_{\mathrm{k}-\mathrm{\epsilon}_i}\in \mathcal{GSM}^{L}(\mathbb O)$.  From (\ref{V-i}) and (\ref{V-i,j}), it holds that for $j=0,1,\ldots,p,$
\[
\begin{split}
\partial_{x_j} \sum_{i=0}^p z_{i} V_{\mathrm{k}-\mathrm{\epsilon}_i}
&=  \sum_{i=0}^p  ( \delta_{ij} V_{\mathrm{k}-\mathrm{\epsilon}_i} +z_i V_{\mathrm{k}-\mathrm{\epsilon}_i-\mathrm{\epsilon}_j}) \\
 &=   V_{\mathrm{k}-\mathrm{\epsilon}_j}+ (|\mathrm{k}| -1)    V_{\mathrm{k}-\mathrm{\epsilon}_j} \\
&=|\mathrm{k}|      V_{\mathrm{k}-\mathrm{\epsilon}_j} =|\mathrm{k}|    \partial_{x_j} V_{\mathrm{k}},
\end{split}
\]
which gives
 \begin{equation}\label{V-i,j-D-xp}
 D_{\bx_p} \sum_{i=0}^p z_{i} V_{\mathrm{k}-\mathrm{\epsilon}_i}
 =|\mathrm{k}|  \sum_{i=0}^p e_i   \partial_{x_i} V_{\mathrm{k}} =|\mathrm{k}| D_{\bx_p} V_{\mathrm{k}}.\end{equation}
Furthermore, it holds that
\[
\begin{split}
  \partial_{r}\sum_{i=0}^p z_{i} V_{\mathrm{k}-\mathrm{\epsilon}_i}
&=   \sum_{i=0}^p  ((\underline{\omega}e_i) V_{\mathrm{k}-\mathrm{\epsilon}_i} + z_i \partial_{r}V_{\mathrm{k}-\mathrm{\epsilon}_i})\\
&=   \sum_{i=0}^p  (\underline{\omega}e_i) V_{\mathrm{k}-\mathrm{\epsilon}_i} +
 \sum_{i=0}^p z_i( \sum_{j=0}^p   (\underline{\omega}e_j)  V_{\mathrm{k}-\mathrm{\epsilon}_i-\mathrm{\epsilon}_j})\\
&= \sum_{i=0}^p  (\underline{\omega}e_i) V_{\mathrm{k}-\mathrm{\epsilon}_i}+  \sum_{i=0}^p z_i( \sum_{j=0}^p   V_{\mathrm{k}-\mathrm{\epsilon}_i-\mathrm{\epsilon}_j}(\underline{\omega}e_j))\\
&=\sum_{i=0}^p  (\underline{\omega}e_i) V_{\mathrm{k}-\mathrm{\epsilon}_i}+
 \sum_{j=0}^p\sum_{i=0}^p ( z_i  V_{\mathrm{k}-\mathrm{\epsilon}_i-\mathrm{\epsilon}_j})(\underline{\omega}e_j)\\
&=\sum_{i=0}^p  (\underline{\omega}e_i) V_{\mathrm{k}-\mathrm{\epsilon}_i}+
(|\mathrm{k}| -1)\sum_{j=0}^p   V_{\mathrm{k}-\mathrm{\epsilon}_j}(\underline{\omega}e_j)\\
 &=|\mathrm{k}|  \sum_{j=0}^p  (\underline{\omega}e_j) V_{\mathrm{k}-\mathrm{\epsilon}_j}\\
  &=|\mathrm{k}|  \partial_{r}  V_{\mathrm{k}},
 \end{split}
\]
where the second,  third, fourth, sixth, and    final equalities  depends on Fact 4, Fact 3,  Fact 1, Fact 3, and  Fact 4, respectively.
Combining this with (\ref{V-i,j-D-xp}),  we finally  get
  $$ D_{ \underline{\omega}} \sum_{i=0}^p z_{i} V_{\mathrm{k}-\mathrm{\epsilon}_i}=|\mathrm{k}|  D_{ \underline{\omega}} V_{\mathrm{k}}=0,$$
which finishes the proof.

\end{proof}

\begin{theorem}\label{V-P}
 For each $\mathrm{k} \in \mathbb{N}^{p+1}$, there holds
 $ V_{\mathrm{k}}(\bx)=  P_{\mathrm{k}}(\bx).$
  \end{theorem}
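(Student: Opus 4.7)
\medskip

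\noindent\textbf{Proof plan.} The natural strategy is induction on the length $k=|\mathrm{k}|$, using Proposition~\ref{V-k-relation} to supply a recursion for the left-hand side and the definition of $\mathcal P_{\mathrm k}$ (together with Proposition~\ref{no-order}) to supply the matching recursion for the right-hand side. The base cases are immediate: for $\mathrm k=\mathbf 0$ both $V_{\mathbf 0}$ and $\mathcal P_{\mathbf 0}$ equal $1$, and for $\mathrm k=\epsilon_\ell$ one has $V_{\epsilon_\ell}=CK[x_\ell]=z_\ell=\mathcal P_{\epsilon_\ell}$ by Definition~\ref{Cauchy-Kovalevska-extension-Fueter}. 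So the real work is in the inductive step.

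\smallskip

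Assume the conclusion holds for all multi-indices of length strictly less than $k$. By Proposition~\ref{V-k-relation} and the inductive hypothesis,
\[
|\mathrm{k}|\,V_{\mathrm{k}}\;=\;\sum_{i=0}^{p} z_{i}\,V_{\mathrm{k}-\epsilon_i}\;=\;\sum_{i=0}^{p} z_{i}\,\mathcal P_{\mathrm{k}-\epsilon_i}.
\]
To conclude, I would establish the companion identity
\[
|\mathrm{k}|\,\mathcal P_{\mathrm{k}}\;=\;\sum_{i=0}^{p} z_{i}\,\mathcal P_{\mathrm{k}-\epsilon_i},
\]
and then combine the two. For this I would pick, throughout the definition of $\mathcal P_{\mathrm k}$, the right-to-left associative order (justified by Proposition~\ref{no-order}, which guarantees that the symmetrised sum is order-independent). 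With this choice, each summand $(z_{i_1}z_{i_2}\cdots z_{i_k})_R$ factorises cleanly as $z_{i_1}\cdot(z_{i_2}\cdots z_{i_k})_R$; grouping the $k!/\mathrm{k}!$ distinct permutations by their first letter $j$ (with $k_j>0$) and noting that the tail runs through the $(k-1)!/(\mathrm{k}-\epsilon_j)!$ distinct permutations of $\mathrm{k}-\epsilon_j$, I obtain
\[
k!\,\mathcal P_{\mathrm{k}}\;=\;\sum_{j=0}^{p} z_{j}\cdot\Bigl(\sum_{\text{distinct perms of }\mathrm{k}-\epsilon_j}(z_{i_2}\cdots z_{i_k})_R\Bigr)\;=\;(k-1)!\sum_{j=0}^{p} z_{j}\,\mathcal P_{\mathrm{k}-\epsilon_j},
\]
which is the desired recursion. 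Equating the two expressions gives $V_{\mathrm k}=\mathcal P_{\mathrm k}$.

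\smallskip

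\noindent\textbf{Main obstacle.} The delicate point is entirely the non-associativity: the identity $(z_{i_1}\cdots z_{i_k})_R=z_{i_1}\cdot(z_{i_2}\cdots z_{i_k})_R$ that lets us peel off the first factor is valid \emph{only} for the right-to-left order, while the left-to-right order used in typical symmetric-polynomial arguments would obstruct the factorisation. Hence the crux is to be careful at the outset to choose the associative order that is compatible with the left-multiplication appearing in Proposition~\ref{V-k-relation}, and to invoke Proposition~\ref{no-order} to legitimise this choice. Once this bookkeeping is in place the induction closes immediately; no further analytical input is needed.

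\smallskip

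\noindent\textbf{Alternative (shorter but weaker) route.} One could try to bypass the recursion and appeal directly to the identity theorem: both $V_{\mathrm k}$ and $\mathcal P_{\mathrm k}$ restrict to $\tfrac{1}{\mathrm k!}\bx_p^{\mathrm k}$ on $\mathbb R^{p+1}$, so by Theorem~\ref{Identity-theorem} they coincide on every $p$-symmetric slice domain containing $\mathbb R^{p+1}$ \emph{provided} one knows a priori that $\mathcal P_{\mathrm k}\in\mathcal{GSM}^L(\mathbb O)$. But verifying this membership directly in the non-associative octonionic setting is essentially of the same difficulty as the recursion above, so I would proceed by the inductive route described.
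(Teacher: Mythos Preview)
Your proposal is correct and follows essentially the same route as the paper. The paper also uses Proposition~\ref{V-k-relation} together with Proposition~\ref{no-order}, working in the right-to-left associative order to peel off the leading $z_i$; it simply iterates the recursion for $V_{\mathrm k}$ all the way down in one chain of equalities rather than packaging it as a formal induction with a separately stated recursion $|\mathrm k|\,\mathcal P_{\mathrm k}=\sum_i z_i\,\mathcal P_{\mathrm k-\epsilon_i}$, but the key ingredients and the handling of non-associativity are identical.
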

\begin{proof}
By Propositions \ref{V-k-relation} and \ref{no-order}, we have for $\mathrm{k} \in \mathbb{N}^{p+1}$
 \begin{eqnarray*}
 V_{\mathrm{k}}(\bx)&=&\frac{1}{|\mathrm{k}|}\sum_{i=0}^p z_{i} V_{\mathrm{k}-\mathrm{\epsilon}_i}(\bx)\\
&=&\frac{1}{|\mathrm{k}|(|\mathrm{k}|-1)}\sum_{i,j=0}^p z_{i} (z_j V_{\mathrm{k}-\mathrm{\epsilon}_i -\mathrm{\epsilon}_j} (\bx) )
 \\
&=&  \cdots \\
   &= & \frac{1}{|\mathrm{k}|! }  \sum_{i_1,\ldots, i_k=0}^p (z
   _{i_1}\cdots z_{i_k}V_{\mathrm{k}-\mathrm{\epsilon}_{i_i}-\cdots-\mathrm{\epsilon}_{i_k}}(\bx))_{R}\\
  &=& \frac{1}{|\mathrm{k}|! } \sum_{\mathrm{k}=\mathrm{\epsilon}_{i_1}+\cdots+\mathrm{\epsilon}_{i_k}} (z_{i_1}\cdots z_{i_k} )_{R}\\
   &=& P_{\mathrm{k}}(\bx),
\end{eqnarray*}
which completes the proof.
\end{proof}

%%%%%%%%%%%%%%%%%%%%%%%%%%%%%%%%%%%%%%%%%%%%%

\begin{lemma}\label{Taylor-E-lemma}
Let $\underline{\eta}\in \mathbb{S}$ and   $\Omega\subseteq \mathbb{O}$ be a   domain. If $\phi \in C^1(\Omega, \mathbb{O}) $  is of the form
\begin{equation}\label{slice-form-E-Taylor}
\phi(\bx_{p}+r\underline{\eta})=\Phi(\bx')+\underline{\eta} \Psi(\bx'),
\end{equation}
 where $\Psi(\bx') \in \mathbb{R}$, $ \Phi(\bx')=\sum_{i=0}^p \Phi_{i}(\bx') e_i\in \mathbb{R}^{p+1}$ with $  \Phi_{i}\in \mathbb{R}, i=0,1,\ldots,p$, and satisfies
 \begin{equation}\label{part-p-i-j}
\partial_{x_i}\Phi_j= \partial_{x_j}\Phi_i, \quad 1\leq i,j\leq p,
\end{equation}
 then
$$\sum_{i=0}^{p}[\underline{\eta},e_i,    \partial_{x_i}\phi_{\underline{\eta}} ]=0.$$
\end{lemma}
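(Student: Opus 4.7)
The plan is to mimic the computational strategy of Lemma \ref{Cauchy-formula-lemma}, adapted to the simpler setting here. First I would substitute the slice decomposition \eqref{slice-form-E-Taylor} into the expression. Since $\Phi_j$ and $\Psi$ are real-valued, differentiation yields
$$\partial_{x_i}\phi_{\underline{\eta}}=\sum_{j=0}^{p}(\partial_{x_i}\Phi_j)\,e_j + \underline{\eta}\,(\partial_{x_i}\Psi),$$
and I would plug this into $\sum_{i=0}^{p}[\underline{\eta},e_i,\partial_{x_i}\phi_{\underline{\eta}}]$, using $\mathbb{R}$-trilinearity of the associator in its third slot to split the sum into a ``$\Phi$-part'' and a ``$\Psi$-part.''

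Next I would eliminate degenerate summands by invoking two standard features of the associator on $\mathbb{O}$. On the one hand, it vanishes whenever an entry equals $e_0=1$, which allows me to drop every term containing $e_0$ (equivalently, to restrict the sums in $i$ and $j$ to $\{1,\ldots,p\}$). On the other hand, the alternating property gives $[\underline{\eta},e_i,\underline{\eta}]=0$, which kills the entire $\Psi$-part, as $\partial_{x_i}\Psi$ is real and can be pulled out. What remains is
$$\sum_{i=1}^{p}\sum_{j=1}^{p}[\underline{\eta},e_i,e_j]\,\partial_{x_i}\Phi_j.$$
Invoking once more the alternating identity $[\underline{\eta},e_i,e_j]=-[\underline{\eta},e_j,e_i]$ (which in particular gives $[\underline{\eta},e_i,e_i]=0$), I would pair the indices $(i,j)$ and $(j,i)$ to rewrite this as
$$\sum_{1\leq i<j\leq p}[\underline{\eta},e_i,e_j]\bigl(\partial_{x_i}\Phi_j-\partial_{x_j}\Phi_i\bigr),$$
and this vanishes term-by-term by hypothesis \eqref{part-p-i-j}.

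There is no real obstacle here: the argument is pure bookkeeping with a trilinear alternating form. The only point meriting care is that \eqref{part-p-i-j} is assumed only on the range $1\leq i,j\leq p$, so eliminating the $i=0$ and $j=0$ contributions via $[e_0,\cdot,\cdot]=0$ is a necessary preparatory move before the antisymmetry/pairing step can be applied. Similarly, the $\underline{\eta}\,\partial_{x_i}\Psi$ terms must be annihilated by the alternating identity rather than by any symmetry of $\Psi$, since none is assumed.
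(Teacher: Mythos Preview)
Your proposal is correct and follows essentially the same route as the paper's own proof: the paper compresses your steps (2)--(5) into the single equality $\sum_{i=0}^{p}[\underline{\eta},e_i,\partial_{x_i}\phi_{\underline{\eta}}]=\sum_{i,j=1}^{p}[\underline{\eta},e_i,e_j]\,\partial_{x_i}\Phi_j$ and then pairs $(i,j)$ with $(j,i)$ exactly as you do, using \eqref{part-p-i-j} and the alternating identity $[\underline{\eta},e_i,e_j]+[\underline{\eta},e_j,e_i]=0$ to conclude.
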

\begin{proof}
By direct calculations, we have
 \[
   \begin{split}
   \sum_{i=0}^{p} [\underline{\eta},e_i,     \partial_{x_i} \phi_{\underline{\eta}} ]
   &= \sum_{i,j=1}^{p} [\underline{\eta},e_i,   e_j ]  \partial_{x_i} \Phi_j\\
  &= \sum_{1\leq i\neq j\leq p} ( [\underline{\eta},e_i,   e_j ] +  [\underline{\eta},e_j,   e_i] ) \partial_{x_i} \Phi_j=0,
  \end{split}
  \]
  as desired.
\end{proof}
\begin{lemma}\label{lemmakth-E}
Let $\underline{\eta}\in\mathbb S$ and consider the polynomial $P$ defined  in $\mathbb{O}$   satisfying that its restriction $P_{\underline{\eta}}$ is homogeneous  of degree $k$ with
$$(\sum_{i=0}^{p}e_i\partial_{x_i}+\underline{\eta}\partial_{r})P(\bx_p+\underline{\eta} r)=0,$$
and
\begin{equation}\label{polynomial-associative}
\sum_{i=0}^{p}[\underline{\eta},e_i,     \partial_{x_i} P_{\underline{\eta}} ]=0.\end{equation}
Then we have
 $$P(\bx_p+\underline{\eta} r)= \sum_{|\mathrm{k}|=k} \mathcal{P}_{\mathrm{k}}  (\bx_p+r\underline{\eta}) a_{\mathrm{k}},\quad  a_{\mathrm{k}}= \partial_{\mathrm{k}} P(0). $$
\end{lemma}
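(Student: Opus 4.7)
The plan is to argue by induction on the total degree $k=|\mathrm{k}|$, combining the induction hypothesis applied to each partial derivative $\partial_{x_i}P$ with a Fueter-variable Euler identity that uses both Proposition~\ref{V-k-relation} and the associator hypothesis (\ref{polynomial-associative}). The base case $k=0$ is immediate: $P$ reduces to the constant $a_{\mathbf{0}}=P(0)$ and $\mathcal{P}_{\mathbf{0}}=1$, so the formula is trivial. For the inductive step, I first observe that for each $i\in\{0,1,\dots,p\}$ the polynomial $\partial_{x_i}P$ has slice restriction homogeneous of degree $k-1$, lies in the kernel of $D_{\bx_p}+\underline{\eta}\partial_r$ (the operator has constant coefficients), and still satisfies (\ref{polynomial-associative}) since the associator is trilinear and $\partial_{x_i}$ can be pulled inside the defining sum. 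The inductive hypothesis then gives
\[
\partial_{x_i}P(\bx_p+r\underline{\eta})=\sum_{|\mathrm{k}'|=k-1}\mathcal{P}_{\mathrm{k}'}(\bx_p+r\underline{\eta})\,a_{\mathrm{k}'+\mathrm{\epsilon}_i},\qquad a_{\mathrm{k}'+\mathrm{\epsilon}_i}=\partial_{\mathrm{k}'+\mathrm{\epsilon}_i}P(0).
\]

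Next I would assemble these partial expansions through a Fueter version of Euler's homogeneity identity. The usual Euler relation gives $\sum_{i=0}^{p}x_i\partial_{x_i}P+r\partial_rP=kP$, and applying $\underline{\eta}$ from the left to $\underline{\eta}\partial_rP=-D_{\bx_p}P$ together with Proposition~\ref{artin} yields $\partial_rP=\sum_{i=0}^{p}\underline{\eta}(e_i\partial_{x_i}P)$. Substituting $z_i=x_i+r\underline{\eta}e_i$ on the slice $\underline{\eta}$ one then computes
\[
\sum_{i=0}^{p}z_i\,\partial_{x_i}P=kP+r\sum_{i=0}^{p}\bigl((\underline{\eta}e_i)\partial_{x_i}P-\underline{\eta}(e_i\partial_{x_i}P)\bigr)=kP-r\sum_{i=0}^{p}[\underline{\eta},e_i,\partial_{x_i}P_{\underline{\eta}}]=kP,
\]
the last equality being precisely the hypothesis (\ref{polynomial-associative}). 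Inserting the inductive expansions and reindexing $\mathrm{k}=\mathrm{k}'+\mathrm{\epsilon}_i$ produces
\[
kP=\sum_{|\mathrm{k}|=k}\sum_{i=0}^{p}z_i\bigl(\mathcal{P}_{\mathrm{k}-\mathrm{\epsilon}_i}a_{\mathrm{k}}\bigr).
\]

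The main obstacle comes at the concluding step, where the constant $a_{\mathrm{k}}$ has to be moved past $\mathcal{P}_{\mathrm{k}-\mathrm{\epsilon}_i}$ in order to apply Proposition~\ref{V-k-relation}; this is where the non-associativity of $\mathbb{O}$ really bites and requires the associator identity $\sum_{i=0}^{p}[z_i,\mathcal{P}_{\mathrm{k}-\mathrm{\epsilon}_i},a_{\mathrm{k}}]=0$ for each fixed $\mathrm{k}$. To prove it I would invoke the decomposition $V_{\mathrm{k}'}=\Phi_{\mathrm{k}'}+\underline{\eta}\Psi_{\mathrm{k}'}$ with $\Phi_{\mathrm{k}'}\in\mathbb{R}$ and $\Psi_{\mathrm{k}'}=\sum_{j=0}^{p}\Psi_{\mathrm{k}',j}e_j\in\mathbb{R}^{p+1}$ supplied by Remark~\ref{CK-real-special-case}, together with the cross symmetry $\Psi_{\mathrm{k}-\mathrm{\epsilon}_i,j}=\Psi_{\mathrm{k}-\mathrm{\epsilon}_j,i}$ already recorded in equation (\ref{V-i-j}). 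Because $x_i$ and $\Phi_{\mathrm{k}-\mathrm{\epsilon}_i}$ are real, every associator slot containing them vanishes, so each summand collapses to $r\,[\underline{\eta}e_i,\underline{\eta}\Psi_{\mathrm{k}-\mathrm{\epsilon}_i},a_{\mathrm{k}}]$; expanding $\Psi_{\mathrm{k}-\mathrm{\epsilon}_i}$ in the basis $\{e_j\}_{j=0}^{p}$, the diagonal contributions $i=j$ vanish by Proposition~\ref{artin}, and pairing the off-diagonal terms $(i,j)$ with $(j,i)$, the antisymmetry of the associator combined with the coefficient symmetry gives the required cancellation. Proposition~\ref{V-k-relation} then yields $\sum_{i=0}^{p}z_i\mathcal{P}_{\mathrm{k}-\mathrm{\epsilon}_i}=k\mathcal{P}_{\mathrm{k}}$, hence $kP=k\sum_{|\mathrm{k}|=k}\mathcal{P}_{\mathrm{k}}a_{\mathrm{k}}$, and dividing by $k$ (legitimate for $k\geq 1$) completes the induction.
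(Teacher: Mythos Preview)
Your argument is correct (modulo a harmless sign slip: $(\underline{\eta}e_i)\partial_{x_i}P-\underline{\eta}(e_i\partial_{x_i}P)=+[\underline{\eta},e_i,\partial_{x_i}P]$, not minus; the term vanishes anyway). Both you and the paper start from the same Fueter--Euler identity $kP=\sum_{i=0}^{p} z_i\,\partial_{x_i}P$, obtained from homogeneity plus hypothesis~(\ref{polynomial-associative}), but you then diverge in how you deal with non-associativity. The paper does \emph{not} invoke Proposition~\ref{V-k-relation}: it simply iterates the identity $k$ times to reach
\[
k!\,P=\sum_{i_1,\ldots,i_k=0}^{p}\bigl(z_{i_1}\cdots z_{i_k}\,\partial_{\mathrm{k}}P(0)\bigr)_{R},
\]
groups terms by multi-index, and then appeals to Proposition~\ref{no-order} to switch from the right-nested to the left-nested product, after which the constant $a_{\mathrm{k}}$ factors out trivially. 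Your route instead plugs the induction hypothesis in after a single step and must then prove the cancellation $\sum_{i}[z_i,\mathcal{P}_{\mathrm{k}-\epsilon_i},a_{\mathrm{k}}]=0$; this you do correctly via the decomposition $V_{\mathrm{k}'}=\Phi_{\mathrm{k}'}+\underline{\eta}\Psi_{\mathrm{k}'}$ and the symmetry~(\ref{V-i-j}), which is essentially a reprise of ``Fact~1'' from the proof of Proposition~\ref{V-k-relation}. The trade-off: the paper's argument is self-contained and uses only the elementary symmetrization principle of Proposition~\ref{no-order}, whereas yours leverages the already-established recursion $\sum_i z_i V_{\mathrm{k}-\epsilon_i}=|\mathrm{k}|V_{\mathrm{k}}$ as a black box at the cost of re-deriving an associator identity that was already buried in its proof.
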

\begin{proof}
Fix $\underline{\eta}\in\mathbb S$ and consider a generic polynomial, homogeneous of degree $k$,
 satisfying (\ref{polynomial-associative}) and
 $$
 (\sum_{i=0}^{p}e_i\partial_{x_i}+\underline{\eta}\partial_{r})P(\bx_p+\underline{\eta} r)=0,
 $$
 from which we deduce   by Proposition \ref{artin}
 \begin{equation}\label{Poly}
 \partial_{r}P(\bx_p+\underline{\eta} r)=\underline{\eta} (\sum_{i=0}^{p} e_i\partial_{x_i} P(\bx_p+\underline{\eta} r))
 =\sum_{i=0}^{p} (\underline{\eta}e_i)\partial_{x_i} P(\bx_p+\underline{\eta} r)  .
 \end{equation}
 Since $P_{\underline{\eta}}$ is homogeneous of degree $k$, $P_{\underline{\eta}}$ satisfies also
 $$
 \sum_{i=0}^p x_i\partial_{x_i} P_{\underline{\eta}} + r \partial_r P_{\underline{\eta}}= k P_{\underline{\eta}},
 $$
 and substituting \eqref{Poly} in this last expression we get
 $$
 k P(\bx_p+\underline{\eta} r)=  \sum_{i=0}^p (x_i+\underline{\eta} e_i r) \partial_{x_i}P(\bx_p+\underline{\eta} r)= \sum_{i=0}^p z_i  \partial_{x_i}P(\bx_p+\underline{\eta} r).
 $$
Now we iterate the procedure for the derivatives $\partial_{x_i}P_{\underline{\eta}}$, $i=0,\,\ldots, p,$ which are homogeneous polynomials of degree $(k-1)$, in the kernel of the operator $(D_{\bx_p}+\underline{\eta}\partial r)$,  and
 satisfy  (\ref{polynomial-associative}).  After $k$ iterations, we get
  $$
  k! P(\bx_p+\underline{\eta} r)=  \sum_{i_1,\ldots, i_k=0}^p \Big(z_{i_1}\ldots  z_{i_k}\frac{\partial^k}{\partial_{x_{i_1}}\ldots \partial_{x_{i_k}}}P(\bx_p+\underline{\eta} r) \Big)_{R}.
  $$
  Since the order of derivation does not affect the calculations, we can group all the derivatives of the form
  $\partial_{\mathrm{k}}=\frac{\partial^{k} P }{\partial_{x_{0}}^{k_0}\partial_{x_{1}}^{k_1}\cdots\partial_{x_{p}}^{k_p} }$ with $\mathrm{k}=(k_0,k_1,\ldots,k_{p})$ and $k=|\mathrm{k}|$:
  \[
  \begin{split}
  P(\bx_p+\underline{\eta} r)&=  \frac{1}{k! }  \sum_{i_1,\ldots, i_k=0}^p \Big(z_{i_1}\ldots z_{i_k}\frac{\partial^k}{\partial_{x_{i_1}}\ldots \partial_{x_{i_k}}}P(\bx_p+\underline{\eta} r) \Big)_{R}\\
  &=\frac{1}{k! } \sum_{|\mathrm{k}|=k}  \sum_{(i_1,i_2, \ldots, i_k)\in \sigma(\overrightarrow{\mathrm{k}})  }
  (z_{i_1}  z_{i_2} \cdots  z_{i_k}  \partial_{\mathrm{k}} P(\bx_p+\underline{\eta} r))_{R}\\
  &=\frac{1}{k! } \sum_{|\mathrm{k}|=k}  \sum_{(i_1,i_2, \ldots, i_k)\in \sigma(\overrightarrow{\mathrm{k}})  }
  (z_{i_1}  z_{i_2} \cdots  z_{i_k}  \partial_{\mathrm{k}} P(\bx_p+\underline{\eta} r))_{L}\\
   &=\sum_{|\mathrm{k}|=k} \mathcal{P}_{\mathrm{k}}(\bx_p+\underline{\eta} r)\partial_{\mathrm{k}}P(\bx_p+\underline{\eta} r)\\
  &=\sum_{|\mathrm{k}|=k} \mathcal{P}_{\mathrm{k}}(\bx_p+\underline{\eta} r) \partial_{\mathrm{k}} P(0),
  \end{split}
  \]
  where $\sigma(\overrightarrow{\mathrm{k}})$ is  as in Definition \ref{definition-Fueter}, the third equality follows from Proposition \ref{no-order} and last equality follows from the fact that $P_{\underline{\eta}}$ has degree $k$.
  The proof is complete.
\end{proof}
For   $\mathrm{k} \in \mathbb{N}^{p+1}$, define
$$\mathcal{Q}_{ \mathrm{k}}  (\bx):=(-1)^{|\mathrm{k}|}  \partial_{\mathrm{k}} E(\bx).$$
In particular, for $p=0$ and $\mathrm{k} =k\in \mathbb{N}$, we have by definition
$$\mathcal{Q}_{ \mathrm{k}}  (\bx)= \frac{k!}{2\pi} \bx^{-(k+1)}.$$

\begin{lemma} \label{Taylor-lemma-E}
Given $\by\in \mathrm{H}_{\underline{\eta}}$ for some $\underline{\eta} \in \mathbb{S}$, it holds that for all $\bx\in B(|\by|)=\{\bx \in \mathrm{H}_{\underline{\eta}}: |\bx|<|\by|\}$
$$E_{\by}(\bx)=  \sum_{k=0}^{+\infty}\Big( \sum_{|\mathrm{k}|=k} \mathcal{P}_{\mathrm{k}}  (\bx) \mathcal{Q}_{ \mathrm{k}}  (\by)\Big)
=  \sum_{k=0}^{+\infty} \Big( \sum_{|\mathrm{k}|=k} \mathcal{Q}_{ \mathrm{k}}  (\by)\mathcal{P}^{R}_{\mathrm{k}}  (\bx) \Big), $$
where the series converges uniformly on any compact subsets of $ B(|\by|)$.
\end{lemma}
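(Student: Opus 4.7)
The plan is to Taylor-expand $E_\by(\bx)$ on the slice $\mathrm{H}_{\underline{\eta}}$ and identify each homogeneous component using Lemma \ref{lemmakth-E}. The core observation is that $E_\by(\bx)$ restricted to $\mathrm{H}_{\underline{\eta}}$ inherits the same slice-type decomposition and symmetry as the Cauchy kernel itself: writing $\bx=\bx_p+r\underline{\eta}$ and $\by=\by_p+\widetilde{r}\,\underline{\eta}$, the explicit form $E_\by(\bx)=\tfrac{1}{\sigma_{p+1}}\tfrac{\overline{\by-\bx}}{|\by-\bx|^{p+2}}$ gives $E_\by=\Phi(\bx')+\underline{\eta}\,\Psi(\bx')$ with $\Phi\in\mathbb{R}^{p+1}$, $\Psi\in\mathbb{R}$, together with the symmetry $\partial_{x_i}\Phi_j=\partial_{x_j}\Phi_i$ for $1\leq i,j\leq p$ inherited from (\ref{Eij}). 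A standard chain rule using $E\in\mathcal{GSM}^L$ also confirms $D_{\underline{\eta}}E_\by=0$ on $\mathrm{H}_{\underline{\eta}}\setminus\{\by\}$.

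Next, because $E$ is real analytic on $\mathbb{O}\setminus\{0\}$ and admits classical multipole-type estimates, the Taylor series of $E_\by(\bx_p+r\underline{\eta})$ about the origin in the $p+2$ real variables $(x_0,\ldots,x_p,r)$ converges uniformly on compact subsets of $B(|\by|)$. Grouping by total degree gives
$$E_\by(\bx_p+r\underline{\eta})=\sum_{k=0}^{+\infty}H_k(\bx_p,r),$$
where each $H_k$ is a homogeneous $\mathbb{O}$-valued polynomial of degree $k$. Since $D_{\underline{\eta}}$ is homogeneous of degree $-1$, each $D_{\underline{\eta}}H_k=0$. The slice-type form and the symmetry above are preserved under passing to the degree-$k$ homogeneous part, so each $H_k$ satisfies the hypothesis of Lemma \ref{Taylor-E-lemma}; that lemma supplies the associator identity (\ref{polynomial-associative}) needed to invoke Lemma \ref{lemmakth-E}.

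Applying Lemma \ref{lemmakth-E} to each $H_k$ gives
$$H_k(\bx_p+r\underline{\eta})=\sum_{|\mathrm{k}|=k}\mathcal{P}_{\mathrm{k}}(\bx_p+r\underline{\eta})\,\partial_{\mathrm{k}}H_k(0).$$
Since $\partial_{\mathrm{k}}$ of order $k=|\mathrm{k}|$ annihilates homogeneous polynomials of smaller degree and makes those of larger degree vanish at the origin, $\partial_{\mathrm{k}}H_k(0)=\partial_{\mathrm{k}}E_\by(0)$. A direct chain rule on $\bx\mapsto\by-\bx$ then yields $\partial_{\bx,\mathrm{k}}E(\by-\bx)|_{\bx=0}=(-1)^{|\mathrm{k}|}\partial_{\mathrm{k}}E(\by)=\mathcal{Q}_{\mathrm{k}}(\by)$. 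Summing over $k$ proves the first identity with the claimed convergence. The second identity follows by the mirror argument: $E\in\mathcal{GSM}^R$ and the right-handed analogs of Lemmas \ref{Taylor-E-lemma} and \ref{lemmakth-E}, formulated with the right Fueter polynomials $\mathcal{P}^R_{\mathrm{k}}$ and a right Euler-type identity, decompose each $H_k$ as $\sum_{|\mathrm{k}|=k}\mathcal{Q}_{\mathrm{k}}(\by)\mathcal{P}^R_{\mathrm{k}}(\bx)$.

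The main obstacle throughout is non-associativity: an individual product $\mathcal{P}_{\mathrm{k}}(\bx)\mathcal{Q}_{\mathrm{k}}(\by)$ need not be generalized partial-slice monogenic in $\bx$ (see Example \ref{za}), so one cannot verify the series identity by a term-by-term monogenicity check. The resolution is to extract the coefficients degree by degree from the homogeneous components of $E_\by$ itself, relying on the fact that the structural properties—slice-type form and the symmetry (\ref{part-p-i-j})—are preserved under taking homogeneous parts, which is exactly what enables Lemma \ref{lemmakth-E}.
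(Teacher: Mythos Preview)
Your proposal is correct and follows essentially the same strategy as the paper: decompose $E_\by$ on the slice into homogeneous parts, verify via Lemma~\ref{Taylor-E-lemma} that each homogeneous piece satisfies the associator condition~(\ref{polynomial-associative}), and then invoke Lemma~\ref{lemmakth-E} to identify the coefficients as $\mathcal{Q}_{\mathrm{k}}(\by)$. Your coefficient computation by the chain rule $\partial_{\bx,\mathrm{k}}E(\by-\bx)|_{\bx=0}=(-1)^{|\mathrm{k}|}\partial_{\mathrm{k}}E(\by)$ is in fact a bit more direct than the paper's route through the operator $\langle\bx',\nabla_{\by'}\rangle^k$; conversely, the paper is more explicit about uniform convergence, deriving it from the Gegenbauer expansion and Sommen's estimates rather than appealing to unspecified multipole bounds.
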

\begin{proof}
Here we only prove the first series expansion, the second can be obtained with the same strategy.
Let  $\bx,\by\in \mathrm{H}_{\underline{\eta}}$ with $|\bx|<|\by|$.

For $p=0$, it holds that
$$\frac{\overline{\by-\bx}}{|\by-\bx|^{2}}=\frac{1}{\by-\bx }=\sum_{k=0}^{+\infty}\bx^{k}\by^{-(k+1)}.$$
Hence, by Remark \ref{Futer-special-case-p-0}, we have
$$E_{\by}(\bx)=  \sum_{k=0}^{+\infty}  \mathcal{P}_{k}  (\bx) \mathcal{Q}_{k}  (\by). $$
which converges uniformly on any compact subsets of $ B(|\by|)$ since
$$\sum_{k=0}^{+\infty}|\bx^{k}\by^{-(k+1)}|=\sum_{k=0}^{+\infty}|\bx|^{k}|\by|^{-(k+1)} =\frac{1}{|\by|-|\bx|}. $$

For $p\geq 1$,  it holds that
$$ \frac{\overline{\by-\bx}}{|\by-\bx|^{p+2}} =\frac{1}{p } \overline{D}_{\bx,\underline{\eta}} \frac{1}{|\by-\bx|^{p}},$$
where $\bx=\bx_{p}+r\underline{\eta}, D_{\bx,\underline{\eta}}=D_{\bx_p}+\underline{\eta}\partial_{r}$.\\
From the formula
$$ \frac{1}{|\by-\bx|^{p}}=\sum_{k=0}^{\infty} \frac{(-1)^{k}}{k!} \langle \bx', \nabla_{\by'}\rangle^{k}   \frac{1}{|\by|^{p}},$$
where $\by'=(\by_{p}, s), s=|\underline{\by}_{q}|, \langle \bx', \nabla_{\by'}\rangle = \sum_{i=0}^{p}x_i\partial_{y_i} + r \partial_{s}$,\\
  we have
$$ \frac{\overline{\by-\bx}}{|\by-\bx|^{p+2}} =\sum_{k=0}^{\infty}P_{k}(\bx, \by), $$
where the homogeneous polynomial  $P_{k}( \cdot, \by)$  of degree $k$ ia given by
\begin{equation}\label{p-k-form-1}
P_{k}(\bx, \by)= \frac{(-1)^{k}}{k! p}\overline{D}_{\bx, \underline{\eta}} \big( \langle \bx', \nabla_{\by'} \rangle^{k}   \frac{1}{|\by|^{p}}\big)
\in ker D_{\bx,  \underline{\eta}},
\end{equation}
or
\begin{equation}\label{p-k-form-2}
P_{k}(\bx, \by)=- \frac{(-1)^{k}}{k! p} \overline{D}_{\by, \underline{\eta}} \big(\langle \bx', \nabla_{\by'} \rangle^{k} \frac{1}{|\by|^{p}}\big)
\in ker D_{\bx,  \underline{\eta}}. \end{equation}
Observe that  $P_{k}( \cdot, \by)$ in the form of (\ref{p-k-form-1}) satisfies the conditions in (\ref{slice-form-E-Taylor}) and  (\ref{part-p-i-j}), which gives by Lemma \ref{Taylor-E-lemma} that
\begin{equation}\label{lemmakth-E-condition}
\sum_{i=0}^{p}[\underline{\eta},e_i,    \partial_{x_i} P_{k}(\bx_p+r\underline{\eta}, \by) ]=0.\end{equation}
Notice  that  $P_{k}( \cdot, \by)$ in   (\ref{p-k-form-2}) takes also the form of
\begin{equation*}
P_{k}(\bx, \by)=- \frac{(-1)^{k}}{k! p} \langle \bx', \nabla_{\by'} \rangle^{k}\overline{D}_{\by, \underline{\eta}} \big( \frac{1}{|\by|^{p}}\big)
= \frac{(-1)^{k}}{k! }  \langle \bx', \nabla_{\by'} \rangle^{k}    \frac{\overline{\by}}{|\by|^{p+2}}, \end{equation*}
and we can prove by induction that, for all $\mathrm{k} \in \mathbb{N}^{p+1}$ with $|\mathrm{k}|=k$ and the function $h$ smooth  enough,
$$ \partial_{\bx, \mathrm{k}}(\langle \bx', \nabla_{\by'} \rangle^{k} h(\by))|_{ \bx'=0}= |\mathrm{k}|! \partial_{\by, \mathrm{k}} h(\by),$$
which imply that
\begin{equation}\label{lemmakth-E-zero}
\partial_{\bx, \mathrm{k}} P_{k}(0, \by)=(-1)^{k}   \partial_{\by, \mathrm{k}}   \frac{\overline{\by}}{|\by|^{p+2}}=
\sigma_{p+1} \mathcal{Q}_{ \mathrm{k}}  (\by).
\end{equation}
Consequently, in view of (\ref{lemmakth-E-condition}) and (\ref{lemmakth-E-zero}), we get the conclusion follows directly from Lemma \ref{lemmakth-E}:
\begin{equation}\label{left-Taylor}
E_{\by}(\bx)=  \sum_{k=0}^{+\infty}\Big( \sum_{|\mathrm{k}|=k} \mathcal{P}_{\mathrm{k}}  (\bx) \mathcal{Q}_{ \mathrm{k}}  (\by)\Big).
\end{equation}

 Now it remains to show the series in (\ref{left-Taylor}) converges uniformly on any compact subsets of $ B(|\by|)$. To this see, we consider  $\bx, \by \in \mathrm{H}_{\underline{\eta}}$ with $|\bx|<|\by|$, and then
$$ \frac{\overline{\by-\bx}}{|\by-\bx|^{p+2}} =
\sum_{k=0}^{\infty}P^{(k)}(\by^{-1}\bx)  \frac{\overline{\by}}{|\by|^{p+2}}, $$
%\frac{\overline{1-\by^{-1}\bx}}{|1-\by^{-1}\bx|^{p+2}} \frac{\overline{\by}}{|\by|^{p+2}}=
where
$$P^{(k)}(\by^{-1}\bx)= \frac{|\bx|^{k}}{|\by|^{k}} C_{p+2,k}^{+}(\boldsymbol{\alpha},\boldsymbol{\beta}),$$
with $\bx=|\bx| \boldsymbol{\alpha}, \by=|\by|\boldsymbol{\beta}$, and
$$C_{p+2,k}^{+}(\boldsymbol{\alpha},\boldsymbol{\beta}) = \frac{ p+k}{p} C^{\frac{p}{2}}_{k}(\langle \boldsymbol{\alpha}',\boldsymbol{\beta}'\rangle)+
(\langle \boldsymbol{\alpha}',\boldsymbol{\beta}'\rangle- \overline{\boldsymbol{\alpha}} \boldsymbol{\beta})C^{\frac{p}{2}+1}_{k-1}(\langle \boldsymbol{\alpha}',\boldsymbol{\beta}'\rangle),$$
where $C_k^{\nu}$ is the Gegenbauer polynomial of degree $k$ associated with $\nu$.
This result can be proved as in the Clifford case and  we omit its details here, see e.g. \cite[p. 179-183]{Delanghe-Sommen-Soucek-92},  due to that  the calculations do  not involve associativity.

Note that $ P^{(k)}(\by^{-1}\bx)$ is  a polynomial in $\bx'$ of degree $k$ and satisfies the estimate \cite{Sommen-81}

$$|P^{(k)}(\by^{-1}\bx)|\leq C_{p} k^{p+1} \frac{|\bx|^{k}}{|\by|^{k}},$$
where $C_p$ stands for a constant depending on  $p$.

In view of the uniqueness of the Taylor expansion of real analytic functions, we have
$$P_{k}(\bx, \by)=P^{(k)}(\by^{-1}\bx)  \frac{\overline{\by}}{|\by|^{p+2}}.$$
Hence, it follows that
$$|P_{k}(\bx, \by)|\leq C_{p} k^{p+1} \frac{|\bx|^{k}}{|\by|^{k+p+1}},$$
which implies that the series in (\ref{left-Taylor})  converges uniformly on any compact subsets of $ B(|\by|)$. The proof is complete.
\end{proof}

Finally,  we can establish a  Taylor series expansion with a tail, which vanishes  in the case of Clifford (associative) algebras, see \cite[Theorem 3.28]{Xu-Sabadini}, or in the  slice monogenic case (namely $p=0$), \cite[Theorem 2.12]{Gentili-Struppa-10}.
\begin{theorem} \label{Taylor-lemma-T}
Let  $f:B(\rho) \rightarrow \mathbb{O}$ be a  generalized partial-slice monogenic function, where $B(\rho)=\{\bx \in \mathbb{O}: |\bx|<\rho\}$.
Then,   for all $x\in B(r)_{\underline{\eta}}$ with   $r<\rho$ and $\underline{\eta}\in \mathbb{S}$,
$$f(\bx)=  \sum_{k=0}^{+\infty} \Big( \sum_{|\mathrm{k}|=k} \mathcal{P}_{\mathrm{k}}  (\bx)\partial_{ \mathrm{k}}  f(0)+T_{\mathrm{k}}(\bx)\Big) , \quad \mathrm{k}=(k_0,k_1,\ldots,k_{p}),$$
where
$$T_{\mathrm{k}}(\bx)=\int_{\partial B(r)_{\underline{\eta}}}   [\mathcal{P}_{\mathrm{k}}  (\bx),  \mathcal{Q}_{ \mathrm{k}}(\by), \bn(\by)f(\by)] dS(\by),$$
 with $\bn(\by)=\by/r$  being the unit exterior normal to $\partial B(r)_{\underline{\eta}}$ at $\by$ and
 $dS$  being    the   classical   Lebesgue surface  element  in $\mathbb{R}^{p+2}$.
\end{theorem}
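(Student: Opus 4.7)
The plan is to combine the slice Cauchy integral formula on a ball with the kernel expansion from Lemma \ref{Taylor-lemma-E}, and to track carefully the non-associative correction that arises when passing from $E_{\by}(\bx)$ to the Fueter polynomials. Fix $\underline{\eta}\in\mathbb{S}$ and choose $r<\rho$ so that $\bx\in B(r)_{\underline{\eta}}$. By Theorem \ref{Cauchy-slice} applied to $U=B(r)$,
$$f(\bx)=\int_{\partial B(r)_{\underline{\eta}}} E_{\by}(\bx)\,(\bn(\by)f(\by))\,dS(\by).$$
By Lemma \ref{Taylor-lemma-E}, on the compact set $\partial B(r)_{\underline{\eta}}$ the kernel admits the uniformly convergent expansion $E_{\by}(\bx)=\sum_{k=0}^{+\infty}\sum_{|\mathrm{k}|=k}\mathcal{P}_{\mathrm{k}}(\bx)\mathcal{Q}_{\mathrm{k}}(\by)$, so I can interchange the sum with the integral.

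Next, for each multi-index $\mathrm{k}$, I rewrite the integrand using the associator identity
$$(\mathcal{P}_{\mathrm{k}}(\bx)\mathcal{Q}_{\mathrm{k}}(\by))(\bn(\by)f(\by))=\mathcal{P}_{\mathrm{k}}(\bx)\bigl(\mathcal{Q}_{\mathrm{k}}(\by)(\bn(\by)f(\by))\bigr)+[\mathcal{P}_{\mathrm{k}}(\bx),\mathcal{Q}_{\mathrm{k}}(\by),\bn(\by)f(\by)].$$
Integrating over $\partial B(r)_{\underline{\eta}}$, the associator term is exactly $T_{\mathrm{k}}(\bx)$. For the first term, since $\mathcal{P}_{\mathrm{k}}(\bx)$ is independent of $\by$ and the $\mathbb{O}$-valued surface integral is $\mathbb{R}$-linear (in particular left multiplication by a fixed element of $\mathbb{O}$ commutes with integration of real parameters), it equals $\mathcal{P}_{\mathrm{k}}(\bx)\cdot\int_{\partial B(r)_{\underline{\eta}}}\mathcal{Q}_{\mathrm{k}}(\by)(\bn(\by)f(\by))\,dS(\by)$.

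To identify this integral with $\partial_{\mathrm{k}}f(0)$, I differentiate the Cauchy formula under the integral sign; this is legitimate because $f_{\underline{\eta}}\in C^{\infty}$ on $\partial B(r)_{\underline{\eta}}$ (Remark \ref{C1-C00}) and $E_{\by}(\bx)$ is smooth in $\bx$ for $|\bx|<r$. From $E_{\by}(\bx)=E(\by-\bx)$ one gets $\partial_{\bx,\mathrm{k}}E(\by-\bx)\big|_{\bx=0}=(-1)^{|\mathrm{k}|}\partial_{\mathrm{k}}E(\by)=\mathcal{Q}_{\mathrm{k}}(\by)$, so
$$\partial_{\mathrm{k}}f(0)=\int_{\partial B(r)_{\underline{\eta}}}\mathcal{Q}_{\mathrm{k}}(\by)(\bn(\by)f(\by))\,dS(\by).$$
Reassembling the pieces yields the claimed expansion.

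The main obstacle is not a delicate estimate but the bookkeeping imposed by non-associativity: one must verify that the associator bracket $T_{\mathrm{k}}(\bx)$, which would be identically zero over any associative algebra and is therefore invisible in the classical Clifford or slice monogenic case, is precisely the residual term produced by separating the $\mathcal{P}_{\mathrm{k}}(\bx)$ factor from the rest of the integrand, and that interchanging the summation with the integral is consistent with this associator decomposition. Both points are handled by the linearity of the integral in each of its three arguments (as a consequence of its definition componentwise over $\mathbb{R}$) together with the uniform convergence provided by Lemma \ref{Taylor-lemma-E}.
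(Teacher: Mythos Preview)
Your proof is correct and follows essentially the same route as the paper: apply the slice Cauchy formula (Theorem \ref{Cauchy-slice}), substitute the uniformly convergent kernel expansion of Lemma \ref{Taylor-lemma-E}, interchange sum and integral, split each term via the associator identity into $\mathcal{P}_{\mathrm{k}}(\bx)\bigl(\mathcal{Q}_{\mathrm{k}}(\by)(\bn f)\bigr)+[\mathcal{P}_{\mathrm{k}}(\bx),\mathcal{Q}_{\mathrm{k}}(\by),\bn f]$, and identify the first integral with $\partial_{\mathrm{k}}f(0)$ by differentiating the Cauchy formula at $\bx=0$. The only minor remark is that the uniform convergence in Lemma \ref{Taylor-lemma-E} is stated in $\bx$ on compacta of $B(|\by|)$, whereas the interchange here requires uniformity in $\by\in\partial B(r)_{\underline{\eta}}$ for fixed $\bx$; this is, however, immediate from the explicit estimate $|P_k(\bx,\by)|\le C_p\,k^{p+1}|\bx|^k/|\by|^{k+p+1}$ in the proof of that lemma.
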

\begin{proof}
Let $f\in\mathcal {GSM}(B(\rho))$. For    $ \bx\in B(r)_{\underline{\eta}}$ with  $ r<\rho$ and $ \underline{\eta}\in \mathbb{S}$,   we have by Theorem \ref{Cauchy-slice}
\begin{equation}\label{Cauchy-1}
f(\bx)=\int_{\partial B(r)_{\underline{\eta}}}  E_{\by}(\bx) (\bn(\by)f(\by)) dS(\by),
 \end{equation}
where  $\bn(\by)=\by/r$ is the unit exterior normal to $\partial B(r)_{\underline{\eta}}$ at $\by$,
 $dS$  stands  for  the   classical   Lebesgue surface  element  in $\mathbb{R}^{p+2}$.\\
Hence, for $\mathrm{k}=(k_0,k_1,\ldots,k_{p})\in\mathbb{N}^{p+1}$,
$$\partial_{\mathrm{k}}  f(0)=\int_{\partial B(r)_{\underline{\eta}}}  \mathcal{Q}_{ \mathrm{k}}  (\by) (\bn(\by)f(\by)) dS(\by).$$
Recalling Lemma \ref{Taylor-lemma-E} and (\ref{Cauchy-1}), we get
\begin{eqnarray*}
 f(\bx) &=&\int_{\partial B(r)_{\underline{\eta}}}\Big(  \sum_{k=0}^{+\infty}  \Big( \sum_{|\mathrm{k}|=k} \mathcal{P}_{\mathrm{k}}  (\bx) \mathcal{Q}_{ \mathrm{k}}  (\by) \Big) \Big)(\bn(\by)f(\by)) dS(\by)\\
&=&  \int_{\partial B(r)_{\underline{\eta}}}   \sum_{k=0}^{+\infty} \Big(  \Big( \sum_{|\mathrm{k}|=k} \mathcal{P}_{\mathrm{k}}  (\bx) \mathcal{Q}_{ \mathrm{k}}  (\by) \Big) (\bn(\by)f(\by)) \Big)  dS(\by)
 \\
&=&  \sum_{k=0}^{+\infty}   \int_{\partial B(r)_{\underline{\eta}}}  \Big( \sum_{|\mathrm{k}|=k} \mathcal{P}_{\mathrm{k}}  (\bx) \mathcal{Q}_{ \mathrm{k}}  (\by)  \Big)  (\bn(\by)f(\by))  dS(\by)
\\
&=&\sum_{k=0}^{+\infty} \int_{\partial B(r)_{\underline{\eta}}}   \sum_{|\mathrm{k}|=k}( \mathcal{P}_{\mathrm{k}}  (\bx) \mathcal{Q}_{ \mathrm{k}}  (\by)  ) (\bn(\by)f(\by)) dS(\by)
\\
&=&\sum_{k=0}^{+\infty} \int_{\partial B(r)_{\underline{\eta}}} \sum_{|\mathrm{k}|=k} \Big( \mathcal{P}_{\mathrm{k}}  (\bx)( \mathcal{Q}_{ \mathrm{k}}  (\by) (\bn(\by)f(\by)))\\
 &&\ \ \ \ \ \ \  +[\mathcal{P}_{\mathrm{k}}  (\bx), \mathcal{Q}_{ \mathrm{k}}  (\by), \bn(\by)f(\by)]\Big) dS(\by)\\
  &=& \sum_{k=0}^{+\infty}\Big( \sum_{|\mathrm{k}|=k} \mathcal{P}_{\mathrm{k}}  (\bx) \int_{\partial B(r)_{\underline{\eta}}} \mathcal{Q}_{ \mathrm{k}}  (\by) (\bn(\by)f(\by))) dS(\by)  +T_{\mathrm{k}}(\bx)\Big)\\
   &=&\sum_{k=0}^{+\infty}\Big( \sum_{|\mathrm{k}|=k} \mathcal{P}_{\mathrm{k}}  (\bx)\partial_{ \mathrm{k}}  f(0)+T_{\mathrm{k}}(\bx)\Big),
\end{eqnarray*}
where
$$T_{\mathrm{k}}(\bx)=\int_{\partial B(r)_{\underline{\eta}}}   [\mathcal{P}_{\mathrm{k}}  (\bx),  \mathcal{Q}_{ \mathrm{k}}(\by), \bn(\by)f(\by)]  dS(\by).$$
The proof is complete.
\end{proof}
\section*{Final Conclusion}
In this paper we start the study of generalized partial-slice monogenic functions in a non-associative case.
We work in the octonionic framework, however  most of obtained  results only  rely on some   properties of real alternative algebras, such as the Artin theorem or  Moufang identities. Therefore, this paper could  be rewritten in the general context of real alternative $\ast$-algebras with the suitable changes in the notations and terminology. Moreover, in an alternative $\ast$-algebra, the domain of  generalized partial-slice monogenic  functions should be defined not in the full algebra but, in general, in a suitable hypercomplex subset. We obtain a number of results, among which the Representation Formula, the Cauchy (and Cauchy-Pompeiu) integral formula, the maximum modulus principle. We also study the analog of Fueter polynomials and the Taylor series expansion which differ from the one in the associative case, indeed  a tail appears in each summand which is typical of the non-associative case.

%%%%%%%%%%%%%%%       Reference   %%%%%%%%%%%%%%%%%%%%%%%

%%%%%%%%%%%%%%%

%%%%%%%%%%%%%%%%%%%%%%%%%%%%%%%%%%%%%%%%%%%%%%%%%%%%%%%%%
\bibliographystyle{plain}

\vskip 10mm
\end{document}